\journal{Nonlinear Analysis: Hybrid Systems}
\newtheorem{theorem}{Theorem}
\newtheorem{lemma}[theorem]{Lemma}
\newtheorem{fact}[theorem]{Fact}
\newtheorem{example}{Example}
\newtheorem{corollary}[theorem]{Corollary}
\newcommand{\init}{\iota_{\mathit{init}}}
\newcommand{\alg}{\overline{\mathbb{Q}}}
\newcommand{\seq}[1]{(#1)_{n \in \mathbb{N}}}
\newcommand{\torus}{\mathbb{T}}
\newcommand{\nat}{\mathbb{N}}
\newcommand{\intg}{\mathbb{Z}}
\newcommand{\rel}{\mathbb{R}}
\newcommand{\rat}{\mathbb{Q}}
\newcommand{\com}{\mathbb{C}}
\newcommand{\ralg}{\rel \cap \alg}
\newcommand{\zerovec}{\mathbf{0}}
\newcommand{\Log}{\operatorname{Log}}
\newcommand{\MP}{\mathcal{MP}}
\newcommand{\M}{\mathcal{M}}
\newcommand{\Rea}{\operatorname{Re}}
\newcommand{\Ima}{\operatorname{Im}}
\newcommand{\st}{\colon}
\newcommand{\rebt}{\mathbb{R}_{\exp,\: \textrm{bt}}}
\newcommand{\rexp}{\mathbb{R}_{\exp}}
\newcommand{\Lcal}{\mathcal{L}}
\newcommand{\Tcal}{\mathcal{T}}
\newcommand{\im}{\mathbf{i}}
\newcommand{\Mb}{\mathbb{M}}
\newcommand{\Sb}{\mathbb{S}}
\begin{document}
	
	\begin{frontmatter}
		
		%% Title, authors and addresses
		
		\title{Linear Dynamical Systems with Weight Functions}
		
		%% Authors
		\author[tud]{Rajab Aghamov}
		\author[tud]{Christel Baier}
		\author[mpi]{Toghrul Karimov}
		\author[mpi]{Jo\"el Ouaknine}
		\author[tud,ul]{Jakob Piribauer}
		
		%% Affiliations
		\address[mpi]{Max Planck Institute for Software Systems, Saarland Informatics Campus, Saarbrücken, Germany}
		\address[tud]{TU Dresden, Germany}
		\address[ul]{Leipzig University, Germany}
		
		\begin{abstract}
			In discrete-time linear dynamical systems (LDSs), a linear map is repeatedly applied to an initial vector yielding a sequence of vectors called the orbit of the system. A weight function assigning weights to the points in the orbit can be used to model quantitative aspects, such as resource consumption, of a system modelled by an LDS. This paper addresses the problems of how to compute the mean payoff, the total accumulated weight, and the discounted accumulated weight of the orbit
			 under  continuous weight functions as well as polynomial weight functions as a special case.
			 Additionally, weight functions that are definable in an o-minimal extension of the theory of the reals with exponentiation, which can be shown to be piecewise
			 continuous,  are considered. 
			   In particular, good ergodic properties of o-minimal weight functions, instrumental to the computation of the mean payoff, are established.
			   Besides general LDSs, the special cases of stochastic LDSs and LDSs with bounded orbits are addressed. 
			   Finally, the problem of deciding whether an energy constraint is satisfied by the weighted orbit, i.e., whether the accumulated weight never drops below a given bound, is analysed.
		\end{abstract}
		
		%%Graphical abstract
		%\begin{graphicalabstract}
		%\includegraphics{grabs}
		%\end{graphicalabstract}
		
		%%Research highlights
		%\begin{highlights}
		%\item Research highlight 1
		%\item Research highlight 2
		%\end{highlights}
		
		\begin{keyword}
			linear dynamical systems \sep formal verification \sep linear recurrence sequences \sep Markov chains
		\end{keyword}

	\end{frontmatter}
	
	\section*{Acknowledgements}
	\noindent
			Jo\"el Ouaknine is also part of Keble College, Oxford as emmy.network
Fellow. The authors were supported by DFG grant 389792660 as
part of TRR 248 (see https://perspicuous-computing.science) and
by BMBF (Federal Ministry of Education and Research) in DAAD
project 57616814 ((SECAI, School of Embedded and Composite
AI) as part of the program Konrad Zuse Schools of Excellence in
Artificial Intelligence.
\vspace{12pt}

\hrule

\section*{Related version}

\noindent
This paper is an extension of the conference paper \cite{HSCC}. New results on o-minimal weight functions (\autoref{sec:o-min-mean-payoff}) are included here.
Furthermore, the presentation of the technical parts has been improved and all  proofs omitted in the conference version are included.
\vspace{12pt}

\hrule

	%% \linenumbers
	
	\section{Introduction}

%Dynamical systems describing how the state of a system changes over time constitute a prominent modelling paradigm 
%in a wide variety of fields.
%A  \emph{discrete-time linear dynamical system} (LDS) in ambient space $\rel^d$ starts at some initial point $q\in \rel^d$. The dynamics of the system are given by a linear update function in form of a matrix $M\in \rel^{d\times d}$ that is applied to the current state of the system at each time step. This gives rise to the \emph{orbit} $(q,Mq,M^2q, \dots)$.
%
%
%The investigation of LDSs is particularly important as they are arguably the simplest form of dynamical systems, but nevertheless exhibit many challenging problems. 
%Further, the
%linearisation of more complex systems is ubiquitous in control theory and engineering (see, e.g., \cite{lee1987linearization,aranda1996linearization}) and so many real world problems are solved via the use of linearisations.

Dynamical systems, which describe how a system's state evolves over time, serve as a fundamental modeling framework across numerous disciplines.
A \emph{discrete-time linear dynamical system} (LDS) in an ambient space $\mathbb{R}^d$ begins at an initial point $q \in \mathbb{R}^d$. Its evolution follows a linear update rule governed by a matrix $M \in \mathbb{R}^{d \times d}$, which is applied iteratively to the system’s current state at each time step. This process generates the \emph{orbit} $(q, Mq, M^2q, \dots)$.
The study of LDSs is particularly significant because they represent one of the simplest classes of dynamical systems while still presenting numerous intricate challenges. Moreover, linearization plays a crucial role in control theory and engineering, where complex systems are frequently approximated using linear models (see, e.g., \cite{lee1987linearization,aranda1996linearization}), making it an essential tool for solving many real-world problems.

Algorithmic problems concerning LDSs form a lively area of research in computer science.
Reachability problems of LDSs are formidably hard: the Skolem Problem (hyperplane reachability) has been open for a hundred years, and the Positivity Problem (halfspace reachability) is at least as hard as certain long-standing open problems in Diophantine approximation whose solution would amount to a mathematical earthquake.
Decidability is known in low dimensions only, and is obtained through a combination of arguments from number theory and Diophantine approximation  \cite{tijdeman_distan_between_terms_algeb_recur_sequen,Verescchagin,ouaknine13_posit_probl_low_order_linear_recur_sequen,ouaknine_simple-positivity}.
However, it has recently been discovered that ``robust'' versions of many classical open problems of linear dynamical systems are decidable \cite{DCostaKMOSW22,karimov2024verification}. 
For example, for arbitrary $M \in \rat^{d\times d}$, $q \in \rat^d$, and semialgebraic $T$, it is decidable whether for every $\varepsilon > 0$ there exists $q'$ in the $\varepsilon$-ball around $q$ such that the orbit of $q'$ under $M$ reaches $T$.

%Surprisingly, several seemingly simple decidability questions about the orbit of a given LDS have been open for many decades (for an overview, see \cite{KarimovKO022}). 

%
%Reachability problems of linear dynamical systems are formidably hard: the Skolem Problem (hyperplane reachability) has been open for a hundred years, and the Positivity Problem (halfspace reachability) is at least as hard as certain long-standing open problems in Diophantine approximation whose solution would amount to a mathematical earthquake.
%Decidability is known in low dimensions only, and is obtained through a combination of arguments from number theory and Diophantine approximation.
%We make use of these in \autoref{sec:total}.

%
The key new tool is o-minimality, which is a concept originating in logic and model theory, that is  defined by the following property: Any subset of $\rel^d$ that is definable over an o-minimal extension of the real field with exponentiation has finitely many connected components.
%The following property is particularly useful: any subset of $\rel^d$ that can be defined using arithmetic, real exponentiation, restrictions of trigonometric functions to a compact subset of $\rel$, as well as first-order quantifiers and logical connectives has finitely many connected components.
Although it can be seen as a simple property at first, the consequences of o-minimality are drastic: for example, o-minimality has recently been applied in a spectacular fashion to the the problem of counting rational points in a variety, which is the cornerstone problem of Diophantine geometry \cite{pila2014minimality}.

\begin{table*}[t]
	\caption{Overview of the results.}
	\label{tbl:overview}
    \vspace{12pt}
\hspace{-5pt}\scalebox{0.79}{
	\begin{tabular}{l |  l  l  l  l }
		\toprule
		& LDS type & weight function & algorithmic result \\
		\midrule
		mean payoff & arbitrary & polynomial & computable  & (Thm. \ref{thm:mean-payoff-polynomial-weight}) \\\cline{2-5}
		& bounded  & continuous  & integral representation  &(Thm. \ref{thm:mp_bounded}) \\
		&  orbit &  &   computable &\\\cline{2-5}
		& arbitrary &o-minimal & integral representation &(Thm.~\ref{thm:mean-payoff-omin-main}) \\
		&&&computable&\\\cline{2-5}
		&  stochastic,  & continuous & \multirow{3}{5cm}{computable with poly-many evaluations of the weight function} &(Thm. \ref{thm:mp_stochastic_irreducible}) \\
       &irreducible&& & \\
		&&& &\\\cline{2-5}
		& stochastic,  & continuous & 
		 \multirow{2}{5.5cm}{computable with exp-many evaluations of the weight function} 	
		 &(Thm. \ref{thm:mp_stochastic_reducible}) \\
        &reducible&& & \\
		 &&& &\\
		\midrule
		\multirow{2}{3cm}
        {total/discounted weight} & arbitrary & polynomial & computable &(Thm. \ref{thm:total_discounted}) \\
        \\
		\midrule
		 \multirow{2}{3cm}{satisfaction of energy constraints}& arbitrary & polynomial & decidable in dimension 3 &(Thm. \ref{thm:energy_dimension_3})  \\\cline{2-5}
		 &stochastic & linear & Positivity-hard &(Thm. \ref{thm:energy_Diophantine})  \\\cline{2-5}
		 & dimension 4 & polynomial & Diophantine-hard &(Thm. \ref{thm:diophantine4}) \\
		\bottomrule
	\end{tabular}
	}
\end{table*}

In this paper, we address  quantitative verification questions arising when dynamical systems are equipped with a weight function.
To the best of our knowledge, such quantitative verification tasks on weighted LDSs have not been investigated in the literature. 
We consider  \emph{continuous} weight functions  $w\colon \rel^d \to \rel$ assigning a weight to each state in the ambient space.
Such weight functions can be used to model various quantitative aspects of a system, such as
resource or energy consumption, rewards or utilities, or execution time for example.
 Given a weight function $w$, we obtain a sequence of weights of the states in the orbit $(w(q),w(Mq), w(M^2 q), \dots)$.
The goal of this paper is to provide algorithmic answers to the following typical questions arising for weighted systems:
\begin{itemize}
\item[a)] What is the \emph{mean payoff}, i.e., the average weight collected per step?
\item[b)] What is  the \emph{total accumulated weight} of the orbit and what is the so-called \emph{discounted  accumulated weight},
where weights obtained after $k$ time steps are discounted with a factor $\lambda^k$ for a given $\lambda\in(0,1)$?
\item[c)] Is there an $n\in \nat$ such that  the sum of weights obtained in the first $n$ steps lies below a given bound?
This problem is referred to as \emph{satisfaction of an energy constraint} because it  corresponds to determining whether a system ever runs out of energy
when weights model the energy used or gained per step.
\end{itemize}

 \begin{example}
A scheduler assigns tasks to $d$ different processors $P_1,\dots,P_d$ and that the load of the processors at different time steps 
can be modeled as an LDS with matrix $M\in\mathbb{Q}^{d\times d}$ and orbit $(M^k q)_{k\in \mathbb{N}}$ for a $q \in \mathbb{Q}^d$.
Further, assume for each processor $P_i$ there is an optimal load $\mu_i$ under which it works most efficiently.
To evaluate the scheduler, we want to know how closely the real loads in the long-run match the ideal loads. As a measure for how well a vector $x$ matches the vector $\mu$ of ideal loads, we use the average squared distance
\[
\delta_\mu(x) = \frac{1}{d} \sum_{i=1}^d (x_i - \mu_i)^2.
\]
To see how well the scheduler manages to get close to optimal loads in the long-run after a possible intialization phase, 
we consider the mean payoff of the orbit with respect to the weight function $\delta_\mu$, i.e.,
\[
\lim_{\ell \to \infty} \frac{1}{\ell} \sum_{k=0}^{\ell-1} \delta_\mu(M^k q).
\]
If, on the other hand, we know that the orbit will tend to the optimal loads for $k\to \infty$, we might instead also want to measure the total deviation
$\sum_{k=0}^\infty \delta_\mu(M^k q)$. If this value is small,  the orbit converges to the optimal loads rather quickly without large deviations initially.
\end{example}

In order to obtain algorithmic results, we consider different combinations of  classes of LDSs and  classes of  weight functions. Namely, besides arbitrary rational LDSs, we consider also LDSs with bounded orbit and \emph{stochastic LDSs}.
Stochastic LDSs occur in the context of the verification of probabilistic systems: For a finite-state Markov chain, the sequence of distributions over the state space
naturally forms an LDS. The initial distribution can be written as a vector $\init \in [0,1]^d$. Afterwards, the transition probability matrix $P$ can be repeatedly applied to obtain the distribution $P^k\init$ over states after $k$ steps. 
In contrast to the path semantics where a probability measure over infinite paths in a Markov chain is defined, the view of a Markov chain as an LDS is also called the \emph{distribution transformer semantics} of Markov chains (see, e.g., \cite{AgrawalApp}).

For the weight functions, on the one hand, we consider general continuous functions. Of course, for algorithmic results, we have to make additional assumptions on the computability or approximability of these functions. Furthermore, we consider the subclass of  polynomial weight functions with rational coefficients.
On the other hand, we consider weight functions that are definable in an o-minimal structure. These functions can be shown to be piecewise continuous, and include the vast family of weight functions that can be defined using arithmetic and exponentiation in the sense of first-order logic.
We show that the use of such weight functions results in a good limiting behavior of the weights of the orbit of an LDS allowing us to treat the mean payoff for arbitrary LDSs with such weight functions.

\paragraph*{Contribution}
%We address the problems mentioned above for weighted LDSs with rational entries under (piecewise) \emph{continuous} weight functions. 
Our contributions are as follows (see also Table \ref{tbl:overview}).
\begin{itemize}
\item[a)]
Mean payoff:
For rational LDSs equipped with a  polynomial weight function, we show that it is decidable whether the mean payoff exists, in which case it is rational and computable (\autoref{sec:polynomial_MP}).

We then show how to decide whether the orbit of a rational LDS is bounded.
If the orbit of a rational LDS is bounded, we show how to compute the set of accumulation points of the orbit and 
how to obtain a representation of the mean payoff as an integral using this set.
This integral can be approximated to arbitrary precision for any weight function $w$ that is sufficiently well-behaved (\autoref{sec:bounded_MP}).

Next, we consider LDSs with o-minimal weight functions. We show that the orbits of linear dynamical systems equipped with an o-minimal weight function--under some mild restrictions--is ergodic in the sense of time average being equal to space average (\autoref{sec:o-min-mean-payoff}).

Finally, we consider stochastic LDSs, which constitute a special case of LDSs with bounded orbits. 
Here, the orbit only has finitely many accumulation points.
We show that in case the transition matrix is irreducible,  one can compute polynomially many rational points in polynomial time such that the mean payoff 
is the arithmetic mean of the weight function evaluated at these points.
In the reducible case, on the other hand, exponentially many such rational points have to be computed (\autoref{sec:stochastic_MP}).

\item[b)]
Total and discounted accumulated weights:
For rational LDSs and polynomial weight functions, we prove that the total as well as the discounted accumulated weight of the orbit is  computable and rational if finite (\autoref{sec:total_reward}).

\item[c)]
Satisfaction of energy constraints:
First we prove that it is decidable whether an energy constraint is satisfied by an orbit under a polynomial weight function for LDSs of dimension $d=3$.
We furthermore provide two different hardness results regarding possible extensions of this decidability result:
At $d = 4$, the problem is hard with respect to certain open decision problems in Diophantine approximation that are at the moment wide open.
Further, also  restricting to stochastic LDSs and linear weight functions, does not lead to decidability in general:
we show that
the energy satisfaction problem is at least as hard as the Positivity Problem for linear recurrence sequences in this case.
The decidability status of the Positivity Problem is open, and it is known from \cite{ouaknine13_posit_probl_low_order_linear_recur_sequen} that its resolution would amount to major mathematical breakthroughs (sections \ref{sec:baker}--\ref{sec:hardness}).
 \end{itemize}

\paragraph*{Related work}
Verification problems for linear dynamical systems have been extensively studied for decades, starting with the question about the decidability of the Skolem \cite{tijdeman_distan_between_terms_algeb_recur_sequen,Verescchagin} and Positivity \cite{ouaknine13_posit_probl_low_order_linear_recur_sequen,ouaknine_simple-positivity} problems at low orders, which are special cases of the reachability problem for LDSs.
Decidable cases of the more general Model-Checking Problem for LDSs have been studied in \cite{AKK21,KLO22}.
In addition, decidability results for parametric LDSs \cite{Baier0JKLLOPW021} as well as various notions of robust verification \cite{BGV22,DCostaKMOSW22} have been obtained.
See \cite{KarimovKO022} for a survey of what is decidable about discrete-time linear dynamical systems. 

There is  very little related work on LDSs with weight functions. 
Closest to our work is the work by Kelmendi \cite{kelmendi2022}. There, it is shown that the \emph{natural density} (which is a notion of frequency) of visits of a rational LDS in a semialgebraic set always exists and is approximable to arbitrary precision.
A consequence of this result is that the mean payoff of a rational LDS with respect to a ``semialgebraic step function'', which 
takes a partition of the ambient space $\rel^d$ into finitely many semialgebraic sets $S_1,\dots,S_k$ and assigns a rational weight $w_i$ to the points in $S_i$, 
 can be approximated to arbitrary precision. This result is orthogonal to our results.

When it comes to Markov chains viewed as LDSs under the distribution transformer semantics, it is known that Skolem and Positivity-hardness results for general LDSs persist~\cite{AAOW2014}.
Vahanwala has recently shown \cite{mihir} that this is the case even for ergodic Markov chains.
In \cite{AgrawalApp}, Markov chains under the distribution transformer semantics are treated approximatively -- in contrast to our work -- by discretising  the probability value space $[0, 1]$ into a finite set of intervals and the problem to decide whether an approximation of the trajectory obtained in this way satisfies a property is studied.

Several connections between o-minimality and dynamical (or, more generally, cyber-physical) systems have  been established in the literature.
In \cite{lafferriere2000minimal}, Lafferriere et al.\ show that \emph{o-minimal hybrid systems} always admit a finite bisimulation.
In \cite{miller2011expansions}, Miller gives a dichotomy result concerning tameness of expansions of certain o-minimal structures with trajectories of linear dynamical systems.

	\section{Preliminaries}

	We write $\torus$ for $[0,1)$, $\im$ for the imaginary number, and $\zerovec$ for a vector of all zeros whose dimension will be clear from the context.
	We then have that $e^{\im2\pi \torus} = \{z \in \com \st |z| = 1\}$.
	We denote by $\{x\}$ the fractional part of $x \in \rel$, and by $e_i$ the $i$th standard basis vector of $\rel^d$, where $d$ will be clear from the context.

	\subsection{Linear dynamical systems}
	A (discrete-time) \emph{linear dynamical system} $(M,q)$ of dimension $d > 0$  consists of an update matrix $M\in \mathbb{R}^{d\times d}$ and an initial vector $q\in \mathbb{R}^d$.
	If the entries of $M$ and $q$ are rational, we say that the LDS is \emph{rational}.
	The \emph{orbit} of $(M,q)$ is the sequence $(M^k q)_{k\in\mathbb{N}}$.
	We say that the orbit of $(M,q)$ is bounded if it is bounded as a set under the Euclidean metric.
	An LDS is called \emph{stochastic} if the matrix $M$ and the initial vector $q$ have only non-negative entries and the entries of each column of $M$ as well as the entries of~$q$ sum up to $1$. 
	In this case we refer to the matrix $M$ as stochastic as well.\footnote{
	In order to keep the notation in line with the notation for general LDSs, we deviate from the standard convention that rows of stochastic matrices sum up to $1$ and that stochastic matrices are applied to  distributions by multiplication from the right.}

	\subsection{Algebraic numbers}
	A number $\alpha \in \com$ is \emph{algebraic} if there exists a polynomial $p \in \rat[x]$ such that $p(\alpha) = 0$. 
	Algebraic numbers form a subfield of ~$\com$ denoted by $\alg$. 
	The minimal polynomial of $\alpha \in \alg$ is the (unique) monic polynomial $p \in \rat[x]$ of the smallest degree such that $p(\alpha) = 0$. 
	The \emph{degree} of $\alpha$, denoted by $\deg(\alpha)$, is  the degree of the minimal polynomial of $\alpha$. 
	For each $\alpha \in \alg$ there exists a unique polynomial $P_\alpha = \sum_{i=0}^{d} a_i X^i \in  \intg[x]$ with $d = \deg(\alpha)$, called the \emph{defining polynomial} of $\alpha$, such that $P_{\alpha}(\alpha) = 0$ and $\gcd(a_0, \ldots, a_{d}) = 1$.
	The polynomial~$P_{\alpha}$ and the minimal polynomial of $\alpha$ have identical roots, and are \emph{square-free}, i.e., all of their roots appear with multiplicity one.
	The \emph{(naive) height} of~$\alpha$, denoted by $H(\alpha)$, is equal to $\max_{0\le i \le d} |a_i|$. 
	We represent an algebraic number $\alpha$ in computer memory by its defining polynomial $P_\alpha$ and sufficiently precise rational approximations of $\operatorname{Re}(\alpha), \operatorname{Im}(\alpha)$ to distinguish $\alpha$ from 
	other roots of $P_\alpha$.
	We denote by $||\alpha||$ the bit length of a representation of $\alpha \in \alg$.
	We can perform arithmetic effectively on algebraic numbers represented in this way \cite{cohen2013course}.

\subsection{Linear recurrence sequences}
\label{sec:lrs}
A sequence $\seq{u_n}$ is a \emph{linear recurrence sequence} 
over a ring $R \subseteq \com$ if there exists a positive integer $d$ and a \emph{recurrence relation} $(a_0,\ldots,a_{d-1}) \in R^d$ such that $u_{n+d} = \sum_{i=0}^{d-1}a_iu_{n+i}$ for all $n$. 
The \emph{order} of $\seq{u_n}$ is the smallest positive integer $d$ such that $\seq{u_n}$ satisfies a recurrence relation in $R^d$.
In this work, we will mostly encounter linear recurrence sequences over $\rat$, called \emph{rational} LRSs.
Examples of rational LRSs include the Fibonacci sequence, $u_n = p(n)$ for $p \in \rat[x]$, and $u_n = \cos (n\theta)$ where $\theta \in \{\arg(\lambda)\colon \lambda \in \rat(\im)\}$. 
We refer the reader to the books by Everest et al. \cite{everest-recurrence-sequences} and Kauers \& Paule~\cite{kauers-tetrahedron} for a detailed discussion of linear recurrence sequences.

An LRS $\seq{u_n}$ that is not eventually zero satisfies a unique minimal recurrence relation $u_{n+d} = \sum_{i=0}^{d-1}a_iu_{n+i}$ such that $d > 0$ and $a_0 \ne 0$.
Writing
$A = \begin{bmatrix}
	a_1 & \cdots & a_{d-1}
\end{bmatrix}$
and 
$q = \begin{bmatrix}
	u_{0} & \cdots & u_{d-1}
\end{bmatrix}^\top$, the matrix
\[
C \coloneqq
\begin{bmatrix}
	\zerovec & I_{d-1}\\
	a_0& A
\end{bmatrix} = 
\begin{bmatrix}
	0& 1&\cdots&0\\
	\vdots & \vdots & \ddots & \vdots \\
	0&0 & \cdots& 1\\
	a_0 & a_1 & \cdots & a_{d-1}
\end{bmatrix} \in R^{d \times d}
\]
is called the \emph{companion matrix} of $\seq{u_n}$.
We have that
\[
C^nq = (u_n,\ldots,u_{n-d+1})
\] 
and $u_n = e_1^\top C^n q$ for all $n \in \nat$, where $e_i$ denotes the $i$th standard basis vector.
As $a_0 \ne 0$, the matrix $C$ is invertible and does not have zero as an eigenvalue.

The \emph{characteristic polynomial} of $\seq{u_n}$ is $p(x) = x^d - \sum_{i=0}^{d-1} a_ix^i$.
Note that $p$ is identical to the characteristic polynomial $\det(xI-C)$ of the companion matrix $C$.
The \emph{eigenvalues} (also called the \emph{characteristic roots}) of $\seq{u_n}$ are the $d$ (possibly non-distinct) roots $\lambda_1, \ldots, \lambda_d$ of the characteristic polynomial $p$.
An LRS is 
\begin{itemize}
	\item \emph{simple} (or \emph{diagonalisable}) if its characteristic polynomial does not have a repeated root, and
	\item \emph{non-degenerate} if (i) all real eigenvalues are non-negative, and (ii) for every pair of distinct eigenvalues $\lambda_1, \lambda_2$, the ratio $\lambda_1/\lambda_2$ is not a root of unity.
\end{itemize}
For an LRS $\seq{u_n}$ there exists effectively computable $R$ such that for every $0 \le r  < R$, the sequence $u^{(r)}_n = u_{nR +r}$ is non-degenerate \cite[Section~1.1.9]{everest-recurrence-sequences}.
If $\seq{u_n}, \seq{v_n}$ are LRSs over a field $R$, and $\circ \in \{+, -, \cdot\}$, then $w_n = u_n \circ v_n$ also defines an LRS over $R$ \cite[Theorem~4.2]{kauers-tetrahedron}.
Moreover, if $\seq{u_n}$ and $\seq{v_n}$ are both simple, then so is $\seq{w_n}$.

\paragraph*{The exponential polynomial representation of an LRS}
Every LRS $\seq{u_n}$ of order $d > 0$ over $\alg$ can be written in the form
\begin{equation}
	\label{eq:exp-poly-form}
	u_n = \sum_{j=1}^m p_j(n) \lambda_j^n
\end{equation}
where (i) $m \ge 1$, $\lambda_1, \ldots, \lambda_m$ are the distinct non-zero characteristic roots of $\seq{u_n}$, and (ii) each $p_i$ is a non-zero polynomial with algebraic coefficients; see \cite[Chapter~1]{everest-recurrence-sequences}.
Whenever these conditions on $m, \lambda_i$ and $p_i$ are met, we say that the right-hand side is in the \emph{exponential polynomial form}.
Every LRS $\seq{u_n}$ that is not eventually zero has a unique representation of the form (\ref{eq:exp-poly-form}) where the right-hand side is in the exponential polynomial form.
Moreover, the right-hand side of (\ref{eq:exp-poly-form}) cannot be identically zero assuming (i-ii).
This is a folklore result, but we provide a proof for completeness.
\begin{lemma}
	\label{thm:exp-poly-of-id-zero}
	Let $u_n = \sum_{i=1}^m p_i(n)\lambda_i^n$ where $m\ge1$, $\lambda_1,\ldots,\lambda_m \in \alg$ are non-zero and pairwise distinct, and each $p_i \in \alg[x]$ is non-zero.
	Then, the sequence $\seq{u_n}$ is not identically zero.
	Specifically, there exists $0 \le n < d$, where $d = \sum_{i=1}^m (\deg(p_i) + 1)$, such that $u_n \ne 0$.
\end{lemma}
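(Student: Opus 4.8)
The plan is to induct on the number $m$ of distinct characteristic roots, peeling off one root at a time with a twisted shift operator. Write $S$ for the forward shift on sequences, $(Sx)_n = x_{n+1}$, and for $\mu \in \alg$ set $\Delta_\mu = S - \mu\,\mathrm{Id}$. The elementary identity $\Delta_\mu(p(n)\lambda^n) = \lambda^n\bigl(\lambda\, p(n+1) - \mu\, p(n)\bigr)$ shows that for $\lambda \neq \mu$ the operator $\Delta_\mu$ sends a term $p(n)\lambda^n$ to a term $\tilde p(n)\lambda^n$ with $\deg \tilde p = \deg p$ (the leading coefficient is multiplied by $\lambda - \mu \neq 0$), whereas $\Delta_\lambda(p(n)\lambda^n) = \lambda^{n+1}\bigl(p(n+1)-p(n)\bigr)$ lowers the degree of the polynomial by exactly one and annihilates it precisely when $p$ is constant. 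Consequently, with $k_i = \deg p_i + 1$, the operator $\Delta_{\lambda_m}^{\,k_m}$ annihilates the term $p_m(n)\lambda_m^n$ and transforms $\seq{u_n}$ into $v_n = \sum_{i=1}^{m-1} \tilde p_i(n)\lambda_i^n$, where each $\tilde p_i$ is nonzero of the same degree as $p_i$. This is again in exponential polynomial form, with one fewer distinct root.

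The key bookkeeping is that one application of $\Delta_\mu$ expresses $v_n$ in terms of $u_n$ and $u_{n+1}$, so if $u_0 = \cdots = u_{N-1} = 0$ then $v_0 = \cdots = v_{N-2} = 0$; iterating, $\Delta_{\lambda_m}^{k_m}$ turns a zero block of length $N$ into a zero block of length $N - k_m$. Hence, assuming for contradiction that $u_0 = \cdots = u_{d-1} = 0$ with $d = \sum_{i=1}^m k_i$, we obtain $v_0 = \cdots = v_{d'-1} = 0$ with $d' = d - k_m = \sum_{i=1}^{m-1} k_i$, which is exactly the bound associated with $\seq{v_n}$. The induction hypothesis then supplies some $n$ with $0 \le n < d'$ and $v_n \neq 0$, a contradiction. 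The base case $m = 1$ is immediate: since $\lambda_1 \neq 0$, if $u_n = p_1(n)\lambda_1^n$ vanishes for $n = 0, \dots, k_1 - 1 = \deg p_1$, then the nonzero polynomial $p_1$ of degree $\deg p_1$ would have $\deg p_1 + 1$ roots, which is impossible.

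I expect the only delicate point to be the degree bookkeeping in the inductive step: one must verify that the polynomials $\tilde p_i$ produced by $\Delta_{\lambda_m}^{k_m}$ are genuinely nonzero and of unchanged degree, which rests entirely on the distinctness $\lambda_i \neq \lambda_m$, and this is exactly what lets the induction close with the precise constant $d = \sum_i(\deg p_i + 1)$ rather than a weaker bound. As an alternative, non-inductive route, one may observe that the vector $(u_0, \dots, u_{d-1})^\top$ equals $Vc$, where $V$ is the $d\times d$ confluent Vandermonde matrix with entry $\binom{n}{j}\lambda_i^{n-j}$ in row $0 \le n < d$ and column $(i,j)$ with $0 \le j \le \deg p_i$, and $c$ is the (nonzero, since each $p_i \neq 0$ and each $\lambda_i \neq 0$) coordinate vector obtained by writing each $p_i$ in the basis $\bigl(\binom{\,\cdot\,}{j}\bigr)_{0 \le j \le \deg p_i}$; since $\det V = \pm\prod_{1 \le i < i' \le m}(\lambda_i - \lambda_{i'})^{k_i k_{i'}} \neq 0$, the vector $Vc$ is nonzero, so some $u_n$ with $0 \le n < d$ is nonzero. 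I would present the shift-operator argument as the main proof, as it is entirely self-contained.
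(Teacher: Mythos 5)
Your argument is correct, and it finishes differently from the paper's proof even though both hinge on the same twisted difference $u_{n+1}-\mu u_n$. The paper applies this operator only to lower polynomial degrees: it keeps all $m$ roots, drives every $p_i$ down to a constant in at most $\sum_i \deg(p_i)$ steps, and then concludes with the invertibility of an ordinary $m\times m$ Vandermonde matrix, tracking the window of indices as it shifts by one per step. You instead apply $\Delta_{\lambda_m}^{\,\deg p_m+1}$ to annihilate an entire term, induct on the number of distinct roots, and dispose of the base case by counting roots of a single nonzero polynomial; no Vandermonde determinant is needed, and your zero-block bookkeeping ($N \mapsto N-k_m$ per peeled root) delivers the exact bound $d=\sum_i(\deg p_i+1)$ cleanly. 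The delicate points you flag are exactly the right ones and are handled correctly: $\lambda_i\neq\lambda_m$ guarantees the surviving polynomials keep their degree and leading coefficient (scaled by $\lambda_i-\lambda_m\neq 0$), and $\lambda_m\neq 0$ is what makes $\Delta_{\lambda_m}$ kill a constant-times-$\lambda_m^n$ term. Your induction buys a fully self-contained, elementary proof with transparent constant-tracking; the paper's version is structurally shorter but outsources the final step to the nonvanishing of a Vandermonde determinant (your confluent-Vandermonde aside is essentially a one-shot version of that same finish, so presenting the shift-operator induction as the main proof is the right call).
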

\begin{proof}
Suppose $\deg(p_k) \ge 1$ for some $1 \le k \le m$.
Consider the sequence $v_n = u_{n+1}-\lambda_ku_n$.
It will be of the form
\[
v_n = \sum_{i \in I} q_i(n)\lambda_i^n
\]
where $I \subseteq \{1, \ldots, m\}$ with $k \in I$, $\deg(q_k) < \deg(p_k)$, and for all $i \in I$, $q_i$ is not identically zero with $\deg(q_i) \le \deg(p_i)$.
Observe that if $\seq{u_n}$ is identically zero, then so is $\seq{v_n}$.
Moreover, if $v_n$ is non-zero, then either $u_n$ or $u_{n+1}$ is non-zero.
Repeating the process of constructing $v_n$ from $u_n$ at most $\sum_{i=1}^m \deg(p_i)$ times, we obtain 
\[
w_n = \sum_{i=1}^mc_i \lambda_i^n
\]
that is identically zero if $u_n$ is identically zero, where each $c_i$ is an algebraic number and at least one $c_i$ is non-zero.

It remains to argue that $w_n$ cannot be identically zero.
Consider the system of equations 
\[
\sum_{i=1}^{m} x_i\lambda_i^n = 0 \quad\quad \textrm{for $0 \le n < m$}.  
\]
We can write it as $M x = \zerovec$, where $x =  (x_1,\ldots,x_m)$ and $M$ is a Vandermonde matrix with $\det(M) = \prod_{i\ne j} (\lambda_i - \lambda_j)$.
Since $\lambda_1, \ldots, \lambda_m$ are distinct by assumption, $M$ is invertible and $Mx = 0$ if and only if $x = \zerovec$.
Since $c \ne \zerovec$, it follows that $w_n \ne 0$ for some $0 \le n < m$.
Hence there exists $n' \le n + \sum_{i=1}^m \deg(p_i) = n + (d - m) < d$ such that $u_{n'} \ne 0$.
\qedhere
\end{proof}
We can also characterise the exponential polynomial representations of real-valued LRS.
\begin{lemma}
	\label{thm:real-lrs-galois-condition}
	Let $\seq{u_n}$ be as in the statement of \autoref{thm:exp-poly-of-id-zero}.
	If $u_n \in \rel$ for all $n \in \nat$, then for every $1\le i \le m$ there exists $j$ with $1\leq j \leq m$ such that $p_j(n) = \overline{p_i}(n)$ and $\lambda_j = \overline{\lambda_i}$.
\end{lemma}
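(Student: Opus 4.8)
The plan is to exploit the uniqueness of the exponential polynomial representation, stated just above \autoref{thm:exp-poly-of-id-zero}, by comparing $\seq{u_n}$ with its complex conjugate. Since $u_n \in \rel$ for every $n$, we have $u_n = \overline{u_n}$, and
\[
\overline{u_n} \;=\; \overline{\sum_{i=1}^m p_i(n)\lambda_i^n} \;=\; \sum_{i=1}^m \overline{p_i}(n)\,\overline{\lambda_i}^{\,n},
\]
where $\overline{p_i}$ denotes the polynomial obtained from $p_i$ by conjugating each coefficient. The key observation is that the right-hand side is again in exponential polynomial form: complex conjugation is injective on $\com$, so $\overline{\lambda_1},\dots,\overline{\lambda_m}$ are still pairwise distinct and non-zero, and $\overline{p_i}$ is non-zero since $p_i$ is. Thus $\sum_i p_i(n)\lambda_i^n$ and $\sum_i \overline{p_i}(n)\overline{\lambda_i}^{\,n}$ are two exponential polynomial forms representing one and the same sequence $\seq{u_n}$.

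Next I would check that $\seq{u_n}$ is not eventually zero, which is the hypothesis under which the uniqueness statement was formulated. By \autoref{thm:exp-poly-of-id-zero} the sequence is not identically zero; in fact, if $u_n = 0$ for all $n \ge N$, then the shifted sequence $u_{n+N} = \sum_i \bigl(p_i(n+N)\lambda_i^N\bigr)\lambda_i^n$ is once more of the shape covered by \autoref{thm:exp-poly-of-id-zero} — shifting the argument and scaling by the non-zero constant $\lambda_i^N$ leaves each coefficient polynomial non-zero and does not alter the $\lambda_i$ — so it cannot be identically zero, a contradiction. Hence $\seq{u_n}$ is not eventually zero and admits a unique exponential polynomial representation.

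Finally, uniqueness forces the two forms above to agree up to a reordering of the summands: there is a permutation $\sigma$ of $\{1,\dots,m\}$ with $\lambda_{\sigma(i)} = \overline{\lambda_i}$ and $p_{\sigma(i)} = \overline{p_i}$ for every $i$, and taking $j = \sigma(i)$ yields the claimed index. I do not anticipate a genuine obstacle here; the only points deserving a little care are verifying that conjugation sends an exponential polynomial form to an exponential polynomial form (in particular, that distinctness of the $\overline{\lambda_i}$ is preserved) and discharging the ``not eventually zero'' condition needed to invoke uniqueness.
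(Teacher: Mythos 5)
Your proof is correct and follows exactly the same route as the paper: conjugate the exponential polynomial form, use $u_n = \overline{u_n}$, and invoke uniqueness of the representation. You are just more explicit than the paper about the two side conditions (that the conjugated form is again in exponential polynomial form, and that $\seq{u_n}$ is not eventually zero so that uniqueness applies), both of which the paper leaves implicit.
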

\begin{proof}
	We have $\overline{u_n} = \sum_{j=1}^m \overline{p_j}(n)\overline{\lambda_j}^n$.
	Moreover, $u_n = \overline{u_n}$ since $u_n \in \rel$ for all~$n$.
	The result then follows from the uniqueness of the exponential polynomial representation.
\end{proof}

Throughout this work we will encounter sequences of the form $u_n = p(M^nq)$ where $p$ is a polynomial with rational coefficients and $q$ is a vector with rational entries.
Since
\[
p(M^nq) = p(e_1^\top M^nq, \ldots, e_d^\top M^nq),
\]
each $u^{(k)}_n = e_k^\top M^nq$ is an LRS over $\rat$ (this can be seen, e.g., by applying the Cayley-Hamilton theorem), and LRS over $\rat$ are closed under addition and multiplication, the sequence $\seq{p(M^nq)}$ is itself an LRS over $\rat$.

The following is one of the most fundamental results about growth of linear recurrence sequences \cite[Theorem 2]{power-of-positivity}.
\begin{theorem}
	\label{thm:lrs-growth-s-units}
	Let $u_n = \sum_{i=1}^\ell p_i(n)\lambda_i^n$, where the right-hand side is in the exponential polynomial form.
	Further let $\rho = \max_i |\lambda_i|$.
	For every $\varepsilon > 0$, we have that
	\[
	u_n \ne 0 \Rightarrow |u_n| > (\rho - \varepsilon)^n
	\]
	for all sufficiently large $n$.
\end{theorem}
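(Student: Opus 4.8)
This is a classical estimate on the growth of linear recurrence sequences over $\alg$, and I would prove it by combining an elementary decomposition of the exponential-polynomial representation of $\seq{u_n}$ with a lower bound on its dominant part coming from Baker's theorem on linear forms in logarithms. We may assume $0 < \varepsilon < \rho$. First I would normalise to $\rho = 1$: since $\rho = |\lambda_i|$ for some root $\lambda_i$, we have $\rho^2 = \lambda_i\overline{\lambda_i} \in \alg$, so $\rho \in \ralg$ and every $\lambda_j/\rho$ is algebraic; replacing $u_n$ by $u_n/\rho^n = \sum_{j=1}^{\ell}p_j(n)(\lambda_j/\rho)^n$, which is again in exponential-polynomial form with dominant modulus $1$, and $\varepsilon$ by $\varepsilon/\rho$, it suffices to show that for every $\delta\in(0,1)$ one has $u_n\ne 0 \Rightarrow |u_n| > \delta^n$ for all sufficiently large $n$. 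I would in fact prove the stronger bound $|u_n| \ge A\,n^{-B}$ for effective constants $A>0$, $B\in\nat$ and all sufficiently large $n$ with $u_n \ne 0$, which implies the claim because a fixed negative power of $n$ eventually exceeds $\delta^n$.

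Next I would split $u_n = \widetilde u_n + r_n$, where $\widetilde u_n = \sum_{|\lambda_i|=1}p_i(n)\lambda_i^n$ gathers the terms of maximal modulus and $r_n = \sum_{|\lambda_i|<1}p_i(n)\lambda_i^n$ the rest. Let $\sigma \coloneqq \max\{|\lambda_i| : |\lambda_i|<1\} < 1$ (and $\sigma\coloneqq 0$ if there is no subdominant root). Since each $|p_i(n)|$ grows at most polynomially, $|r_n| \le C_0(1+n)^{B_0}\sigma^n$ for suitable $C_0>0$, $B_0\in\nat$; in particular $r_n$ is eventually negligible compared with any $\delta^n$, $\delta\in(0,1)$, and with any fixed negative power of $n$. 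The entire difficulty is therefore to bound $|\widetilde u_n|$ from below.

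The crux is a lower bound on the dominant part. By \autoref{thm:exp-poly-of-id-zero} the sequence $\seq{\widetilde u_n}$ is not identically zero (it is a non-trivial exponential-polynomial sum, as $\rho$ is attained), and I claim that $|\widetilde u_n| \ge A'\,n^{-B'}$ whenever $\widetilde u_n\ne0$ and $n\ge 2$, for effective $A'>0$, $B'\in\nat$. One proves this by partitioning $\nat$ into residue classes modulo a suitable $R$ on which the roots $\lambda_i^R$ -- now possibly coinciding -- have pairwise ratios equal to $1$ or to no root of unity; on a class where the restricted sequence vanishes identically there is nothing to prove, and on the others one is left with a non-degenerate LRS over $\alg$, not identically zero, all of whose characteristic roots lie on the unit circle. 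For such sequences the polynomial lower bound is the standard, technically delicate, consequence of Baker's theorem: a value of size below $A'' n^{-B''}$ would force the relevant linear form in the logarithms of the algebraic numbers $\lambda_i$ -- e.g.\ an expression of the shape $\bigl| n(\Log\lambda_i - \Log\lambda_j) - 2\pi\im k \bigr|$, after reducing the polynomial coefficients to constants -- to be super-polynomially small in $n$, which Baker's effective lower bound for linear forms in logarithms forbids for all large $n$. Taking the worst of the finitely many residue-class constants gives the bound.

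To finish, I would use that $\seq{u_n}$ may be taken non-degenerate (\autoref{sec:lrs}); then $\seq{\widetilde u_n}$ is non-degenerate as well (its roots form a subset of those of $\seq{u_n}$), so by the Skolem--Mahler--Lech theorem its zero set is finite and $\widetilde u_n \ne 0$ for all large $n$. For such $n$ the triangle inequality gives $|u_n| \ge |\widetilde u_n| - |r_n| \ge A'n^{-B'} - C_0(1+n)^{B_0}\sigma^n \ge \tfrac12 A' n^{-B'} > \delta^n$ once $n$ is large enough, which is exactly the claim. The step I expect to be the main obstacle is the lower bound on $\widetilde u_n$: this is where transcendence theory is unavoidable -- Baker's theorem, together with the algebraicity of the characteristic roots, is what prevents $|\widetilde u_n|$ from dipping below every power of $n$ while remaining non-zero -- whereas the decomposition, the estimate on $r_n$, and the appeal to Skolem--Mahler--Lech are routine.
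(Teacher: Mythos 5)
There is a genuine gap, and it sits exactly where you predicted the main difficulty would be. The paper does not prove \autoref{thm:lrs-growth-s-units} at all: it is quoted from \cite[Theorem~2]{power-of-positivity}, where it is obtained from the theory of $S$-unit equations (the results of Evertse and of van der Poorten--Schlickewei, resting on the $p$-adic Subspace Theorem). That route is \emph{ineffective} and yields precisely the $(\rho-\varepsilon)^n$ shape of the bound --- not a polynomial lower bound. Your plan instead asserts that Baker's theorem gives an effective bound $|\widetilde u_n|\ge A'n^{-B'}$ for an arbitrary non-degenerate sum $\sum_{|\lambda_i|=1}p_i(n)\lambda_i^n$. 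Baker's theorem bounds a single linear form in logarithms; it transfers to exponential sums only when the dominant part can be factored into expressions of the shape $\gamma^n-\alpha_j$, i.e.\ essentially when all unit-modulus roots lie in a multiplicative group of rank one. That is exactly the situation exploited in Lemma~\ref{3d-lrs-lemma} of this paper (dominant part $=\gamma^{-Kn}L^n\prod_j(\gamma^n-\alpha_j)$), and it is why the energy-constraint result is confined to dimension~3. For three or more multiplicatively independent dominant roots, say $u_n=1+2\cos(n\theta_1)+2\cos(n\theta_2)$ with the $e^{\im2\pi\theta_i}$ multiplicatively independent, the quantity is not a product of linear forms in logarithms, no effective polynomial lower bound is known, and even the truth of such a bound is open --- indeed an effective bound of this kind would resolve Positivity-type questions that this very paper records as Diophantine-hard. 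So the crux of your argument invokes a tool that does not reach the general case; the decomposition, the estimate on $r_n$, and the Skolem--Mahler--Lech step are fine but peripheral.

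A secondary point: your dismissal of residue classes on which the dominant part vanishes identically ("nothing to prove") is not sound for the theorem as stated, since on such a class $u_n$ equals the subdominant tail and can be non-zero yet far smaller than $(\rho-\varepsilon)^n$; e.g.\ $u_n=2^n-(-2)^n+1$ equals $1$ for all even $n$. This shows the statement needs (and the cited source has) a non-degeneracy hypothesis, which your reduction must carry along explicitly rather than discard. If you want a self-contained proof matching the theorem's actual strength, the argument to reproduce is the $S$-unit/Subspace-Theorem one, accepting ineffectivity; Baker-based arguments should be reserved for the rank-one dominant case as in Lemma~\ref{3d-lrs-lemma}.
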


\paragraph*{Decision problems of linear recurrence sequences}

Sign patterns of LRSs have been studied for a long time. 
Two prominent open problems in this area are the \emph{Skolem Problem} and the \emph{Positivity Problem}. 
The Skolem Problem is to find an algorithm that, given an LRS $u_n$, decides if the set $Z = \{n\colon u_n=0\}$ is non-empty. 
The most well-known result in this direction is the celebrated Skolem-Mahler-Lech theorem, which (non-constructively) shows that $Z$ is a union of a finite set and finitely many arithmetic progressions.
In particular, it shows that a non-degenerate $\seq{u_n}$ can have only finitely many zeros.
The Positivity Problem, on the other hand, asks to find an algorithm that determines if $u_n \geq 0$ for all $n$. 
It is known to subsume the Skolem Problem as well as certain long-standing open problems in Diophantine approximation \cite[Chapter 2]{karimov-thesis}.

\subsection{Markov Chains} 
\label{sub:prelim_MC}
A finite-state \emph{discrete-time Markov chain} (DTMC) $M$ is a tuple $(S,  P,\iota )$, where $S$ is a finite set of states, $P: S \times S \rightarrow [0, 1]$ is the transition probability function satisfying $\sum_{s'\in S}P_{s s'} = 1$ for all $s \in S$ and $\iota : S\rightarrow [0, 1]$ is the initial distribution, such that $\sum_{s\in S}^{ } \iota_{init}(s) = 1$. For algorithmic problems, all transition probabilities are assumed to be rational.
A finite path $\rho$ in $M$ is a finite sequence $s_0s_1 \ldots s_n$ of states such that $P(s_i, s_{i+1}) > 0$ for all $0 \leq i \leq n-1$. 
We say that a state $t$ is reachable from $s$ if there is a finite path from $s$ to $t$. If all states are reachable from all other states, we say that $M$ is \emph{irreducible}; otherwise, we say it is \emph{reducible}. A set $B\subseteq S$ of states is called a bottom strongly connected component (BSCC)
if it is strongly connected, i.e., all states in $B$ are reachable form all other states in $B$ and if there are no outgoing transitions, i.e., $P(s,t)>0$ and $s\in B$ implies $t\in B$.

 Without loss of generality we identify $S$ with $\{1,\dots, d\}$ for $d=|S|$. Then, overloading notation, we consider $P\in\mathbb{R}^{d\times d}$ as a matrix with $P_{ij}=P(j,i)$ for $i,j\leq d$. %\footnote{This is the transpose of the transition matrix usually defined so that we are in line with our notation for LDSs.}
 Likewise, we consider $\iota$ to be a (column)  
 %\footnote{Also here, usually, this is defined as a row vector.}) 
 vector in $\mathbb{R}^d$. 
 Then, the sequence of distributions over states after $k$ steps is given by $P^k \iota$, which forms a stochastic LDS.
 We also write $P^{(k)}_{ij}$ for $(P^k)_{ij}$, which is the probability to move from state $j$ to $i$ in exactly $k$ steps.
 Further, we say that the matrix $P$ is irreducible if the underlying Markov chain is irreducible.
  The \emph{period} $d_i$ of a state $i$ is 
    $d_i = \operatorname{gcd}\{m \geq 1 : P^{(m)}_{ii} > 0\}$.
    If $d_i = 1$, then we call the state $i$ \emph{aperiodic}. A Markov chain (and its transition matrix) is aperiodic if and only if all of its states are aperiodic.  
    The period of a Markov chain $M$ (and, equivalently, of its transition matrix $P$) is the least common multiple of the periods of the states of $M$.

    A vector $\pi\in \mathbb{R}^d$ is called a stationary distribution of the Markov chain  if
    a) $\pi$ is a distribution, i.e., $\pi_j \geq 0$ for all $j$ with $1\leq j \leq d$, and $\sum_{j =1}^d \pi_j = 1$, and
    b) $\pi$ is stationary, i.e., $\pi = P \pi$. 
    % which is to say that $\pi_i = \sum_{i \in S} P_{ij} \pi_j$ for all $j \in S$. 
    %
    For aperiodic Markov chains, it is known that the sequence $(P^k \iota)_{k\in \nat}$ of probability distributions over states  converges to a stationary distribution 
    $\pi$, which can be computed in polynomial time (see   \cite{Kulkarni1995,BK08}).
    
    \subsection{Kronecker's theorem}
    \label{sec:kronecker}
   	We write $\torus$ for the interval $[0,1)$ and $\{x\}$ for the fractional part of $x\in\mathbb{R}$, i.e., $\{x\}=x-\lfloor x \rfloor $.
    Let $\lambda_1,\ldots,\lambda_\ell \in \com$ with $\lambda_i = e^{\im 2\pi \theta_i}$ for all $i$, where $\theta_i \in \torus$.
    A \emph{multiplicative relation} of $\lambda_1,\ldots,\lambda_\ell$ is a tuple $(k_1,\ldots,k_\ell) \in \intg^\ell$ such that $\lambda_1^{k_1}\cdots\lambda_\ell^{k_\ell} = 1$.
    We write $G(\lambda_1,\ldots,\lambda_\ell)$ for the set of all multiplicative relations of $(\lambda_1,\ldots,\lambda_\ell)$, which is a free abelian group.
    If this group is consists only of the neutral element, we say that $\lambda_1,\ldots,\lambda_\ell$ are \emph{multiplicatively independent}.
%    Note that $G$ is a free abelian group, and has a basis of at most $\ell$ elements from $\intg^\ell$.
%    \todo{why does $G$ have a basis of at most $\ell$ elements? free abelian group is repeated here}
    By a deep result of Masser~\cite{masser-mult-relations}, there exists a fixed polynomial $p$ such that $G$ has a basis $B$ of at most $\ell$ elements such that for each $v \in B$, $\Vert v\Vert _\infty < p(\Vert \lambda_1\Vert +\ldots+\Vert \lambda_\ell\Vert )^\ell$.
    Hence a basis of $G$ can be computed in polynomial space (given $\lambda_1,\ldots,\lambda_\ell$) by simply enumerating all possible bases satisfying Masser's bound.
    Note that $\lambda_1,\ldots,\lambda_\ell$ are multiplicatively independent if and only if $1, \theta_1,\ldots,\theta_\ell$ are linearly independent over $\rat$.
    
    We are now ready to state Kronecker's theorem in Diophantine approximation and its consequences for linear recurrence sequences.
    \begin{theorem}
    	Let $\theta_1,\ldots,\theta_\ell \in \torus$, 
    	$
    	\bm{\theta}_n = 
    	(\{n\theta_1\},\ldots,\{n\theta_\ell\})
    	$ for all $n$, and $G = \{(n_1,\ldots,n_\ell)\in\intg^\ell \st n_1\theta_1 + \cdots +n_\ell\theta_\ell \in \intg\}$.
    	The sequence $(\bm{\theta}_n)_{n\in\nat}$ is dense in
    	\[
    	\{(x_1,\ldots,x_\ell) \in \torus^\ell \mid k_1x_1+\cdots+k_\ell x_\ell \in \intg \: \textrm{ for all } (k_1,\ldots,k_\ell) \in G\}.
    	\]
    \end{theorem}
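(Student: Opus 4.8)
The plan is to identify the closure $H$ of the orbit $\{\bm{\theta}_n : n \in \nat\}$ inside the compact abelian group $\torus^\ell \cong \rel^\ell/\intg^\ell$ and to show that it equals the set
$S = \{x \in \torus^\ell : k_1 x_1 + \cdots + k_\ell x_\ell \in \intg \text{ for all } (k_1,\ldots,k_\ell) \in G\}$,
which immediately gives the density assertion. Write $\bm{\theta} = (\theta_1,\ldots,\theta_\ell)$ and let $\pi\colon \rel^\ell \to \torus^\ell$ be the projection, so $\bm{\theta}_n = \pi(n\bm{\theta})$; for $k \in \intg^\ell$ and $x \in \torus^\ell$ the quantity $k\cdot x := k_1 x_1 + \cdots + k_\ell x_\ell$ is well defined modulo $\intg$. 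One inclusion is immediate: for $k \in G$ we have $k\cdot\bm{\theta}_n = n(k\cdot\bm{\theta}) \in \intg$, so every $\bm{\theta}_n$ lies in the closed set $S$, whence $H \subseteq S$.

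Next I would record that $H$ is in fact a closed subgroup of $\torus^\ell$. By compactness $(\bm{\theta}_n)_n$ has a convergent subsequence, which supplies positive integers $m_1 < m_2 < \cdots$ with $\bm{\theta}_{m_j} \to \zerovec$; then for every fixed $\mu \ge 1$ the points $\pi((\mu m_j - \mu)\bm{\theta})$ (indices in $\nat$, since $\mu m_j - \mu \ge 0$) converge to $\pi(-\mu\bm{\theta})$. Hence $H$ contains $\pi(n\bm{\theta})$ for every $n \in \intg$, so $H$ is the closure of the cyclic subgroup generated by $\bm{\theta}_1$, and the closure of a subgroup of a topological group is again a subgroup.

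The core of the argument is the structural fact that a closed subgroup $H$ of $\torus^\ell$ is the common kernel of the continuous characters that vanish on it. The characters of $\torus^\ell$ are precisely the maps $\chi_k \colon x \mapsto e^{\im 2\pi (k\cdot x)}$ with $k \in \intg^\ell$, and one has $H = \{x \in \torus^\ell : \chi_k(x) = 1 \text{ for all } k \in \Lambda\}$, where $\Lambda := \{k \in \intg^\ell : \chi_k \equiv 1 \text{ on } H\}$ is the annihilator of $H$. I would obtain this either from Pontryagin duality for compact abelian groups (equivalently: the characters of the compact group $\torus^\ell/H$ separate its points, so any $x \notin H$ is detected by some $\chi_k$ trivial on $H$), or, more concretely, from the classification of closed subgroups of $\rel^\ell$ as the sum of a linear subspace and a lattice, applied to $\pi^{-1}(H)$. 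I expect this structural input to be the main obstacle to a fully self-contained write-up; everything else is bookkeeping.

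It remains to identify $\Lambda$ with $G$. If $k \in \Lambda$, then since $\bm{\theta}_1 = \pi(\bm{\theta}) \in H$ we get $\chi_k(\bm{\theta}_1) = e^{\im 2\pi(k\cdot\bm{\theta})} = 1$, i.e.\ $k\cdot\bm{\theta} \in \intg$, so $k \in G$. Conversely, if $k \in G$ then $\chi_k(\bm{\theta}_n) = e^{\im 2\pi\, n(k\cdot\bm{\theta})} = 1$ for every $n$, so $\chi_k$ is trivial on the whole orbit, hence by continuity on its closure $H$, giving $k \in \Lambda$. Therefore $\Lambda = G$, and combining this with the previous paragraph yields $H = \{x \in \torus^\ell : k\cdot x \in \intg \text{ for all } k \in G\} = S$, which is exactly the statement of the theorem.
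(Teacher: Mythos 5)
The paper does not actually prove this statement: it is labelled and imported as Kronecker's theorem (with the consequence Corollary~\ref{thm:kronecker-cor2} attributed to~\cite{ouaknine2014ultimate-pos-for-simple-lrs}), so there is no in-paper proof to compare against. Your argument is nevertheless a correct proof, and it follows the standard duality route: the closure $H$ of the orbit is a closed subgroup of $\torus^\ell$, hence equals the common kernel of the characters $\chi_k$ that annihilate it, and the annihilator is exactly $G$. The inclusion $H\subseteq S$ and the identification $\Lambda=G$ are handled cleanly, and you are right that the only nontrivial input is the structure theorem for closed subgroups of $\torus^\ell$ (equivalently, that characters of the compact group $\torus^\ell/H$ separate points), which you correctly flag as the one black box.

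One small gap worth closing: the claim that compactness ``supplies'' $m_1<m_2<\cdots$ with $\bm{\theta}_{m_j}\to\zerovec$ is not literally a convergent subsequence — a subsequence converges to some $y$, not necessarily $\zerovec$. The standard fix is to take differences: from $\bm{\theta}_{n_j}\to y$ pick indices $j_1<j_2<\cdots$ with $n_{j_{k+1}}-n_{j_k}$ strictly increasing and $|\bm{\theta}_{n_{j_{k+1}}}-\bm{\theta}_{n_{j_k}}|<1/k$, and set $m_k=n_{j_{k+1}}-n_{j_k}$; then $\bm{\theta}_{m_k}=\bm{\theta}_{n_{j_{k+1}}}-\bm{\theta}_{n_{j_k}}\to\zerovec$ with $m_k\in\nat$. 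After that your argument that $H$ is the closure of the cyclic group generated by $\pi(\bm{\theta})$, and hence a closed subgroup, goes through as written.
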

    \begin{corollary}
    	\label{thm:kronecker-cor1}
    	If $1,\theta_1,\ldots,\theta_\ell$ are linearly independent over $\rat$, then $(\bm{\theta}_n)_{n\in\nat}$ is dense in $\torus^\ell$.
    \end{corollary}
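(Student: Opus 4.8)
The plan is to derive the corollary immediately from the statement of Kronecker's theorem proved just above, by showing that the linear-independence hypothesis forces the multiplicative-relation group $G$ to be trivial, which in turn makes the closed set appearing in Kronecker's theorem equal to the whole torus $\torus^\ell$.

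First I would unwind the hypothesis. Linear independence of $1,\theta_1,\ldots,\theta_\ell$ over $\rat$ means: whenever $c_0 + c_1\theta_1 + \cdots + c_\ell\theta_\ell = 0$ with $c_0,\ldots,c_\ell \in \rat$, all the $c_i$ vanish. Now take an arbitrary $(n_1,\ldots,n_\ell) \in G$, so that $n_1\theta_1 + \cdots + n_\ell\theta_\ell = m$ for some $m \in \intg$. Rearranging gives $(-m) + n_1\theta_1 + \cdots + n_\ell\theta_\ell = 0$, an integer — hence rational — linear combination of $1,\theta_1,\ldots,\theta_\ell$. Linear independence forces $m = 0$ and $n_1 = \cdots = n_\ell = 0$, so $G = \{\zerovec\}$.

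Next I would substitute $G = \{\zerovec\}$ into the set described by Kronecker's theorem, namely $\{(x_1,\ldots,x_\ell) \in \torus^\ell \mid k_1x_1 + \cdots + k_\ell x_\ell \in \intg \text{ for all } (k_1,\ldots,k_\ell) \in G\}$. The only tuple in $G$ is the zero tuple, and the associated constraint $0\cdot x_1 + \cdots + 0\cdot x_\ell = 0 \in \intg$ is satisfied by every point of $\torus^\ell$; hence this set is exactly $\torus^\ell$. Kronecker's theorem then gives that $(\bm{\theta}_n)_{n\in\nat}$ is dense in $\torus^\ell$, which is precisely the claim.

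The only point requiring a moment's care — rather than a real obstacle — is recognising that the quantifier ``for all $(k_1,\ldots,k_\ell) \in G$'' is satisfied trivially by the zero relation once $G$ collapses to the trivial group; the remainder is a one-line rearrangement invoking the hypothesis.
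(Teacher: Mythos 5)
Your proof is correct and is precisely the intended derivation: the paper states the corollary without proof as an immediate consequence of the theorem, and you have filled in exactly the short argument (the linear independence over $\rat$ of $1,\theta_1,\ldots,\theta_\ell$ forces $G=\{\zerovec\}$, so the closed set in Kronecker's theorem is all of $\torus^\ell$).
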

    \begin{corollary}[\unexpanded{See \cite[Prop.~3.5]{ouaknine2014ultimate-pos-for-simple-lrs}}]
    	\label{thm:kronecker-cor2}
    	Let $\gamma_1,\ldots,\gamma_\ell\in e^{\im 2\pi \torus}$.
    	The sequence $(\gamma_1^n,\ldots,\gamma_\ell^n)_{n\in\nat}$ is dense in
    	\[
    	\{(z_1,\ldots,z_\ell) \in (e^{\im 2\pi  \torus})^\ell \mid G(\gamma_1,\ldots,\gamma_\ell) \subseteq G(z_1,\ldots,z_\ell)\}.
    	\]
    \end{corollary}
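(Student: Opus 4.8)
The plan is to deduce this from Kronecker's theorem by transporting everything through the componentwise exponential map. First I would write each $\gamma_j$ as $\gamma_j = e^{\im 2\pi\theta_j}$ with a unique $\theta_j \in \torus$, and set $\bm{\theta}_n = (\{n\theta_1\},\ldots,\{n\theta_\ell\})$ as in the statement of Kronecker's theorem. Let $\Phi\colon \torus^\ell \to (e^{\im 2\pi\torus})^\ell$ be the map $\Phi(x_1,\ldots,x_\ell) = (e^{\im 2\pi x_1},\ldots,e^{\im 2\pi x_\ell})$; it is continuous and surjective, and since $\torus^\ell$ is compact (when viewed as $(\rel/\intg)^\ell$) it sends closed sets to closed sets. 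The key elementary observation is that $\gamma_j^n = e^{\im 2\pi n\theta_j} = e^{\im 2\pi\{n\theta_j\}}$, so that $(\gamma_1^n,\ldots,\gamma_\ell^n) = \Phi(\bm{\theta}_n)$ for every $n$.

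Next I would match up the two notions of relation. For a tuple $(k_1,\ldots,k_\ell) \in \intg^\ell$ one has $\gamma_1^{k_1}\cdots\gamma_\ell^{k_\ell} = 1$ if and only if $k_1\theta_1 + \cdots + k_\ell\theta_\ell \in \intg$; hence the multiplicative-relations group $G(\gamma_1,\ldots,\gamma_\ell)$ is exactly the group $G$ that appears in Kronecker's theorem for the angles $\theta_1,\ldots,\theta_\ell$. Applying that theorem then gives that $(\bm{\theta}_n)_{n\in\nat}$ is dense in
\[
K = \{(x_1,\ldots,x_\ell) \in \torus^\ell \mid k_1 x_1 + \cdots + k_\ell x_\ell \in \intg \text{ for all } (k_1,\ldots,k_\ell) \in G(\gamma_1,\ldots,\gamma_\ell)\}.
\]
By continuity of $\Phi$ we have $\Phi(K) \subseteq \Phi(\overline{\{\bm{\theta}_n\}}) \subseteq \overline{\{\Phi(\bm{\theta}_n)\}}$, so the image sequence $\Phi(\bm{\theta}_n) = (\gamma_1^n,\ldots,\gamma_\ell^n)$ is dense in $\Phi(K)$, and $\Phi(K)$ is closed.

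It then remains to identify $\Phi(K)$ with the target set $S = \{(z_1,\ldots,z_\ell) \in (e^{\im 2\pi\torus})^\ell \mid G(\gamma_1,\ldots,\gamma_\ell) \subseteq G(z_1,\ldots,z_\ell)\}$. For this I would write an arbitrary $z_j$ as $z_j = e^{\im 2\pi x_j}$ with unique $x_j \in \torus$ and note, by the same computation as before, that a tuple $k$ lies in $G(z_1,\ldots,z_\ell)$ precisely when $k_1 x_1 + \cdots + k_\ell x_\ell \in \intg$. Consequently $G(\gamma_1,\ldots,\gamma_\ell) \subseteq G(z_1,\ldots,z_\ell)$ holds exactly when $(x_1,\ldots,x_\ell) \in K$, i.e.\ exactly when $z \in \Phi(K)$; thus $\Phi(K) = S$ and the statement follows.

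I do not expect a serious obstacle here: the argument is essentially a change of coordinates followed by a density pushforward. The only thing needing care is the repeated bookkeeping identity ``$\gamma^{k} = 1 \iff k\cdot\theta \in \intg$'', applied both to the fixed tuple $(\gamma_1,\ldots,\gamma_\ell)$ and to a generic point $z$, together with the harmless topological remark that a continuous image of a compact set is closed, so that ``dense in $\Phi(K)$'' is unambiguous.
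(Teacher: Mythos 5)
Your proof is correct and matches the intended derivation: the paper gives no argument of its own for this corollary (it is stated as an immediate consequence of the Kronecker theorem quoted just above, with a citation to Prop.~3.5 of the referenced work), and your identification of multiplicative relations $\gamma_1^{k_1}\cdots\gamma_\ell^{k_\ell}=1$ with the integer conditions $k_1\theta_1+\cdots+k_\ell\theta_\ell\in\intg$, followed by pushing the density statement forward through the componentwise exponential map, is exactly the standard route.
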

    \begin{corollary}
    	\label{thm:kronecker-cor3}
    	Let $u_n = \sum_{i=1}^\ell c_i\lambda_i^n$ be an LRS that is not identically zero, where $\lambda_i, c_i \in \com$ and $|\lambda_i| = 1$ for all $i$.
    	There exists $c > 0$ such that $|u_n| > c$ for infinitely many $c$.
    \end{corollary}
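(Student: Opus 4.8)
The plan is to reduce the statement to the density result of \autoref{thm:kronecker-cor2} together with a short continuity/compactness argument. (The statement should read ``for infinitely many $n$'' rather than ``for infinitely many $c$''.) Write $\lambda_i = e^{\im 2\pi \theta_i}$ with $\theta_i \in \torus$ and put $\gamma_i = \lambda_i$, so $|\gamma_i| = 1$. By \autoref{thm:kronecker-cor2}, the sequence $(\gamma_1^n,\ldots,\gamma_\ell^n)_{n\in\nat}$ is dense in
\[
T \;=\; \{(z_1,\ldots,z_\ell)\in(e^{\im 2\pi\torus})^\ell \mid G(\gamma_1,\ldots,\gamma_\ell)\subseteq G(z_1,\ldots,z_\ell)\},
\]
and this set is closed (it is cut out by the conditions $z^v = 1$ for $v$ ranging over a basis of $G(\gamma_1,\ldots,\gamma_\ell)$), hence a compact subset of the torus $(e^{\im 2\pi\torus})^\ell$. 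Note also that every orbit point $(\gamma_1^n,\ldots,\gamma_\ell^n)$ lies in $T$, since $\prod_i \gamma_i^{k_i}=1$ trivially implies $\prod_i(\gamma_i^n)^{k_i}=1$.

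Next I would introduce the continuous function $f\colon T\to\rel_{\ge 0}$ given by $f(z_1,\ldots,z_\ell) = |c_1 z_1 + \cdots + c_\ell z_\ell|$. The key identity is $|u_n| = f(\gamma_1^n,\ldots,\gamma_\ell^n)$ for all $n$. Consequently, if $f$ vanished identically on $T$, then in particular $u_n = 0$ for all $n$, contradicting the hypothesis that $\seq{u_n}$ is not identically zero. Hence there is a point $z^\star\in T$ with $f(z^\star) = \delta > 0$.

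Finally I would transfer this to infinitely many indices using density. By continuity of $f$, the set $U = \{z\in T \mid f(z) > \delta/2\}$ is open in $T$ and contains $z^\star$, so it is nonempty. Since $(\gamma_1^n,\ldots,\gamma_\ell^n)_{n\in\nat}$ is dense in $T$, it meets the nonempty open set $U$ for infinitely many $n$: indeed any point of $U$ is the limit of a subsequence of the orbit, and all sufficiently late terms of that subsequence lie in the open set $U$. For each such $n$ we get $|u_n| = f(\gamma_1^n,\ldots,\gamma_\ell^n) > \delta/2$, so the claim holds with $c = \delta/2$.

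I do not anticipate a serious obstacle. The only points requiring a little care are that $T$ is closed and that density into a fixed nonempty open set forces infinitely many visits, both of which are routine. There is a degenerate case where all $\lambda_i$ are roots of unity, so that $T$ is finite and coincides with the (periodic) orbit; but the argument above applies verbatim, since a finite set is trivially closed, so no separate case analysis is needed.
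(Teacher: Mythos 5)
Your proof is correct and follows essentially the same route as the paper's: both invoke \autoref{thm:kronecker-cor2} to get density of the orbit in the torus coset, find a point (in the paper, an orbit point $(\lambda_1^k,\ldots,\lambda_\ell^k)$ with $u_k\ne 0$; in yours, some $z^\star\in T$) where the continuous map $(z_1,\ldots,z_\ell)\mapsto\sum_i c_iz_i$ is nonzero, and conclude by continuity and density that infinitely many orbit points land in a neighbourhood where $|u_n|$ is bounded below. You also correctly note the typo ``infinitely many $c$'' should read ``infinitely many $n$.''
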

    \begin{proof}
    	Let $k$ be such that $u_k \ne 0$.
    	By \autoref{thm:kronecker-cor2}, there exist infinitely many~$n$ such that $|(\lambda_1^n,\ldots,\lambda_\ell^n) - (\lambda_1^k,\ldots,\lambda_\ell^k)| < \varepsilon$.
    	The statement then follows from the continuity of the function $(z_1,\ldots,z_\ell) \mapsto \sum_{i=1}^\ell c_i z_i$. 
    \end{proof}
    
    \subsection{Logical theories and o-minimality}
    \label{sec:omin-prelims}
    A \emph{structure} $\mathbb{M}$ consists of a universe $U$, constants $c_1,\ldots,c_k \in U$, predicates $P_1,\ldots,P_l$ where each $P_i \subseteq U^{\mu(i)}$ for some $\mu(i) \ge 1$, and functions $f_1,\ldots,f_m$ where each $f_i$ has the type $f_i \st U^{\delta(i)} \to U$ for some $\delta(i) \ge 1$.
    By the \emph{language} of the structure $\mathbb{M}$, written  $\Lcal_{\Mb}$, we mean the set of all well-formed first-order formulas constructed from  symbols denoting  the constants $c_1,\ldots, c_k$, predicates $P_1,\ldots,P_l$, and functions $f_1,\ldots, f_m$, as well as variables and the symbols $\forall, \exists, \land, \lor, \lnot, =$.
    %	A \emph{term} is a well-formed expression constructed from constant, function, and variable symbols.
    %	Terms represent elements of the universe.
    A \emph{theory} is simply a set of sentences, i.e.\ formulas without free variables.
    The theory of the structure $\mathbb{M}$, written $\operatorname{Th}(\Mb)$, is the set of all sentences in the language of $\mathbb{M}$ that are true in $\mathbb{M}$.
    A theory $\Tcal$ is \emph{decidable} if there exists an algorithm that takes a sentence $\varphi$ and decides whether $\varphi \in \Tcal$.
    Below is a list of structures relevant to this work.
    
    \begin{itemize}
    	\item[(a)] Let $\rel_0  = \langle \rel; 0,1,<,+,\cdot \rangle$, which is the ring of real numbers.
    	Observe that using the constants $0,1$ and the addition, we can obtain any constant $c \in \nat$.
    	Hence every atomic formula in $\Lcal_{or}$ with $k$ free variables is equivalent to $p(x_1,\ldots,x_k) \sim 0$, where $p$ is a polynomial with integer coefficients and $\sim$ is either $>$ or the equality.
    	By the Tarski-Seidenberg theorem, $\operatorname{Th}(\rel_0)$ admits quantifier elimination and is decidable.
    	\item[(b)] Let $\rexp = \langle \rel; 0,1,<,+,\cdot, \exp \rangle$, the real numbers augmented with the exponentiation function.
    	The theory of this structure is decidable assuming Schanuel's conjecture, a unifying conjecture in transcendental number theory.
    	\item[(c)] Finally, let  $\rebt$ denote the structure $\langle \rel; 0,1,<,+,\cdot, \exp, \widetilde{\cos}, \widetilde{\sin}\rangle$, which is $\rexp$ augmented with \emph{bounded trigonometric functions}: For $x \in [0,2\pi]$, $\widetilde{\cos}(x) = \cos(x)$ and $\widetilde{\sin}(x) = \sin(x)$. 
    	For all other values of $x$, we have $\widetilde{\cos}(x) = \widetilde{\sin}(x) = 0$.
    \end{itemize}
    
    We say that a structure $\Sb$ \emph{expands} $\Mb$ if $\Sb$ and $\Mb$ have the same universe and every constant, function, and relation of $\Mb$ is also present in~$\Sb$.
    We will only need structures expanding $\rel_0$.
    A set $X \subseteq U^d$ is \emph{definable} in a structure $\mathbb{M}$ if there exist $k \ge 0$, a formula $\varphi$ in the language of $\mathbb{M}$ with~$d+k$ free variables, and $a_1,\ldots,a_k \in U$ such that for all $x_1,\ldots,x_d \in U$, $\varphi(x_1,\ldots,x_d, a_1,\ldots,a_k)$ is true if and only if $(x_1,\ldots,x_d) \in X$.
    We say that $X$ is \emph{definable in $\Mb$ without parameters} if we can take $k = 0$ above.
    Similarly, a function is definable (without parameters) in $\Mb$ if its graph is definable (without parameters) in $\Mb$.
    
    A structure $\Mb$ expanding $\rel_0$ is \emph{o-minimal} if every set definable in $\Mb$ has finitely many connected components.
    The structures $\rel_0$, $\rexp$, and $\rebt$ are all o-minimal \cite{vdD1994bounded-analytic}.
    All o-minimal structures admit cell decomposition \cite[\S 4.2]{vdD-geometric-categories}.
    In particular, every function definable in an o-minimal structure is piecewise continuous and every subset of $[0,1]^m$ definable in an o-minimal structure is Jordan-measurable.
	 \section{Mean payoff}
	
	In this section we study the \emph{mean payoff} of an orbit, which is the average weight collected per step in the long run. For an LDS given by $M\in \mathbb{Q}^{d\times d}$ and $q\in\mathbb{Q}^d$, and a weight function $w\colon \mathbb{R}^d \to \mathbb{R}$, 
	we define the  mean payoff of the orbit as
	\[
	\MP_w(M,q) \coloneqq  \lim_{n\to \infty} \frac{1}{n} \sum_{k=0}^{n-1} w(M^k q).
	\]
	We show how to compute $\MP_w(M,q)$ in three cases: when $w$ is polynomial (\autoref{sec:polynomial_MP}), when the orbit is bounded and $w$ is continuous (\autoref{sec:bounded_MP}), and when $w$ is bounded and o-minimal (\autoref{sec:o-min-mean-payoff}).
	Let us now establish some of the key tools that we will need.
	
	Recall that we denote the interval $[0,1)$ by $\torus$ and the fractional part of $x$ by $\{x\}$.
	For $\mathbf{x} = (x_1,\ldots,x_m) \in \torus^m$, let 
	\[
	\sigma(\mathbf{x}) = (e^{\im 2\pi x_1},e^{-\im 2\pi x_1},\ldots, e^{\im 2\pi x_m},e^{-\im 2\pi x_m}).
	\]
	We will link mean payoff to certain integrals using Weyl's equidistribution theorem from ergodic theory.	\begin{theorem}[\unexpanded{\cite{Weyl1916}, see \cite{kelmendi2022} for an exposition}]
		\label{thm:weyl}
		Let $\theta_1,\ldots,\theta_m \in \torus$ be such that $1,\theta_1,\ldots,\theta_m$ are linearly independent over $\rat$, and $U$ be a Jordan measurable subset of $\torus^m$.
		Writing $\bm{\theta}_n = (\{n\theta_1\},\ldots,\{n\theta_m\})$, we have that
		\[
		\lim_{n\to\infty} \frac{|\{0\leq k < n \mid \bm{\theta}_k  \in U \}|  }{n} = \mathcal{L}(U).
		\]
		Consequently, for every (piecewise) continuous $g \st \torus^m \to \rel$, 
		\[
		\lim_{n\to\infty} \frac{1}{n}\sum_{k=0}^{n-1} g(\bm{\theta}_k) = \int_{\mathbf{x} \in \torus^m} g(\mathbf{x}) d\mathbf{x}
		\]
		where $d\mathbf{x} = dx_1\cdots dx_m$.
	\end{theorem}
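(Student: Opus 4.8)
The plan is to prove the statement by the classical route through \emph{Weyl's criterion}: reduce equidistribution to the vanishing of certain exponential sums, check this vanishing using the linear independence hypothesis, deduce the functional limit first for trigonometric polynomials and then for continuous functions, and finally recover the counting statement for Jordan measurable sets and the functional limit for piecewise continuous functions by sandwiching. Recall Weyl's criterion: a sequence $(\mathbf x_k)_{k\in\nat}$ in $\torus^m$ satisfies $\tfrac1n|\{k<n : \mathbf x_k\in U\}|\to\Lcal(U)$ for every Jordan measurable $U$ if and only if $\tfrac1n\sum_{j=0}^{n-1}e^{\im 2\pi\langle\mathbf k,\mathbf x_j\rangle}\to 0$ for every nonzero $\mathbf k\in\intg^m$. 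For our sequence $\bm\theta_j=(\{j\theta_1\},\ldots,\{j\theta_m\})$ the exponent $\langle\mathbf k,\bm\theta_j\rangle$ differs from $j\,t$ by an integer, where $t=k_1\theta_1+\cdots+k_m\theta_m$, so the summand is $(e^{\im 2\pi t})^j$. Linear independence of $1,\theta_1,\ldots,\theta_m$ over $\rat$ forces $t\notin\intg$, hence $e^{\im 2\pi t}\ne 1$, and the geometric sum is bounded by $2/|1-e^{\im 2\pi t}|$ uniformly in $n$; dividing by $n$ yields the required limit.

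The same computation gives the functional limit directly for a trigonometric polynomial $g(\mathbf x)=\sum_{\mathbf k}c_{\mathbf k}e^{\im 2\pi\langle\mathbf k,\mathbf x\rangle}$: the constant mode $\mathbf k=\zerovec$ contributes $c_{\zerovec}=\int_{\torus^m}g$, and every other mode contributes $0$ by the previous paragraph, so $\tfrac1n\sum_{j<n}g(\bm\theta_j)\to\int_{\torus^m}g$. Since trigonometric polynomials are dense in $C(\torus^m)$ in the supremum norm by Stone--Weierstrass, a routine $3\varepsilon$-argument extends the limit $\tfrac1n\sum_{j<n}g(\bm\theta_j)\to\int_{\torus^m}g$ to every continuous $g\st\torus^m\to\rel$.

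To obtain the counting statement, fix a Jordan measurable $U\subseteq\torus^m$ and $\varepsilon>0$. Since Jordan measurability means $\partial U$ is Lebesgue-null, I would cover $\partial U$ by an open set of measure $<\varepsilon$ and use Urysohn-type bump functions on the quotient torus to produce continuous $f^-\le\mathbbm 1_U\le f^+$ with $\int_{\torus^m}(f^+-f^-)<\varepsilon$; applying the continuous-function limit to $f^-$ and $f^+$ and letting $\varepsilon\to0$ gives $\tfrac1n|\{k<n:\bm\theta_k\in U\}|\to\Lcal(U)$. For a (bounded) piecewise continuous $g$ I would write $\torus^m=\bigsqcup_i U_i$ with each $U_i$ Jordan measurable and $g|_{U_i}$ the restriction of a continuous function $\tilde g_i$ on $\torus^m$; for each $i$, bounding $|\tilde g_i|\le C$ and comparing $\tilde g_i\mathbbm 1_{U_i}$ with the continuous function $\tilde g_i f^-$ (where $f^-\le\mathbbm 1_{U_i}\le f^+$ as above) shows $\tfrac1n\sum_{k<n}(\tilde g_i\mathbbm 1_{U_i})(\bm\theta_k)\to\int_{U_i}g$, and summing over the finitely many $i$ gives $\tfrac1n\sum_{k<n}g(\bm\theta_k)\to\int_{\torus^m}g$. (This ordering --- exponential sums, then trigonometric polynomials, then continuous functions, then Jordan measurable sets and piecewise continuous functions --- subsumes the two assertions in the theorem, which are mutually interderivable.)

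I expect the main obstacle to be the sandwiching step: producing the continuous majorant and minorant of $\mathbbm 1_U$ \emph{honestly} on the torus, which requires turning Jordan measurability into a genuine covering of $\partial U$ by an open set of small measure and then smoothing the indicator while respecting the wraparound topology of $\torus^m$. Everything else --- the geometric-series estimate, the Stone--Weierstrass density step, and the finite-partition bookkeeping for piecewise continuous $g$ --- is standard. If one is willing to invoke Weyl's original theorem \cite{Weyl1916} (see also \cite{kelmendi2022}) as a black box, only the last reduction, from the Jordan measurable / continuous case to the piecewise continuous case, needs to be supplied.
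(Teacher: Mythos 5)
The paper offers no proof of this statement at all: it is imported as a classical result of Weyl, with \cite{kelmendi2022} cited for an exposition, so there is no internal argument to compare yours against. Your proposal is the standard and correct route --- Weyl's criterion via exponential sums, where the linear independence of $1,\theta_1,\ldots,\theta_m$ over $\rat$ guarantees $t=k_1\theta_1+\cdots+k_m\theta_m\notin\intg$ for $\mathbf k\ne\zerovec$ and hence a bounded geometric sum; then trigonometric polynomials, Stone--Weierstrass, and sandwiching $\mathbbm 1_U$ between continuous functions using the fact that $\partial U$ is compact and null --- and the details you sketch (the Urysohn majorant/minorant supported around $\partial U$, the $3\varepsilon$ argument) all go through. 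One refinement worth making in the last step: rather than demanding that $g$ restricted to each Jordan measurable piece $U_i$ extend to a continuous function $\tilde g_i$ on all of $\torus^m$, it is both simpler and more faithful to how the theorem is used later (for the bounded, o-minimally definable functions $g_r\st\torus^m\to[-b,b]$ of \autoref{sec:o-min-mean-payoff}, whose restrictions to cells need not extend continuously to the closure) to observe that any bounded $g$ whose discontinuity set is Lebesgue-null is Riemann integrable, and Riemann integrable functions can be sandwiched by continuous functions directly, after which your equidistribution limit for continuous functions finishes the argument. With that small adjustment your proof covers exactly the ``(piecewise) continuous'' form the paper invokes.
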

	
	We will apply this theorem to orbits of linear dynamical systems in Sections \ref{sec:bounded_MP} and \ref{sec:o-min-mean-payoff} via the following lemma.
	\begin{lemma}
		\label{thm:subsequences-dense}
		Let $\lambda_1,\ldots,\lambda_\ell \in \alg$ where $\lambda_i = \rho_i \gamma_i$ with $\rho_i > 0$, $|\gamma_i| = 1$ and $\gamma_i = e^{\im2\pi\theta_i} \in \alg$.
		We can compute $R,m > 0$ for $1 \le i \le m$ such that $1, \theta_1,\ldots,\theta_m \in \torus$ are linearly independent over $\rat$, and polynomials $p_{i,r}$ for $1\le i \le \ell$ and $0 \le r < R$ such that
		\[
		\lambda_i^{nR+r} = \rho_i^{nR+r} \cdot p_{i,r}(\sigma(\bm{\theta}_n))
		\] 
		for all $i,n$, where $\bm{\theta}_n = (\{n\theta_1\},\ldots,\{n\theta_m\})$.
	\end{lemma}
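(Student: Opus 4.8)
The plan is to reduce the statement to the structure of the multiplicative group generated by the phases $\gamma_1,\dots,\gamma_\ell$, and to pick the modulus $R$ so that it absorbs all the roots of unity that appear. First I would look at the group $\Gamma \leq e^{\im 2\pi\torus}\cap\alg$ generated by $\gamma_1,\dots,\gamma_\ell$. It is finitely generated abelian, and since its torsion subgroup is a \emph{finite} subgroup of the unit circle it is cyclic, generated by a primitive $N$-th root of unity $\zeta$ for some $N\ge 1$. Splitting off a free complement gives multiplicatively independent $g_1,\dots,g_m\in e^{\im 2\pi\torus}\cap\alg$ and integers $a_i,b_{i,1},\dots,b_{i,m}$ such that $\gamma_i = \zeta^{a_i}\prod_{j=1}^m g_j^{\,b_{i,j}}$ for each $1\le i\le\ell$. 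Writing $g_j = e^{\im 2\pi\theta_j}$, multiplicative independence of $g_1,\dots,g_m$ means exactly that $G(g_1,\dots,g_m) = \{\zerovec\}$, which, by the remark preceding Kronecker's theorem in \autoref{sec:kronecker}, is equivalent to $1,\theta_1,\dots,\theta_m$ being linearly independent over $\rat$. (If $m=0$, i.e.\ all $\gamma_i$ are roots of unity, the conclusion is trivial with $R=N$; so assume $m\ge 1$.)

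For effectiveness I would compute a basis of the group of multiplicative relations $G(\gamma_1,\dots,\gamma_\ell)\le\intg^\ell$ via Masser's bound, exactly as in \autoref{sec:kronecker}; since $e_i\mapsto\gamma_i$ induces an isomorphism $\intg^\ell/G(\gamma_1,\dots,\gamma_\ell)\cong\Gamma$, a Smith normal form computation then produces $N$, the exponents $a_i$ and $b_{i,j}$, and explicit monomials in $\gamma_1,\dots,\gamma_\ell$ representing $\zeta$ and the $g_j$, all of which we evaluate as algebraic numbers. (That the computation returns only one nontrivial torsion invariant factor, namely $N$, is forced by $\Gamma$ sitting inside the unit circle.)

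Now set $R=N$. Raising $\gamma_i = \zeta^{a_i}\prod_j g_j^{\,b_{i,j}}$ to the power $nR+r$ and using $\zeta^R = \zeta^N = 1$ kills the torsion factor, leaving the $n$-independent constant $\zeta^{a_i r}$; and for each free factor, since $R\,b_{i,j}\in\intg$ and $n\theta_j \equiv \{n\theta_j\}\pmod{\intg}$,
\[
g_j^{\,b_{i,j}(nR+r)} \;=\; e^{\im 2\pi b_{i,j}r\theta_j}\cdot e^{\im 2\pi R b_{i,j}\{n\theta_j\}} \;=\; g_j^{\,b_{i,j}r}\cdot\bigl(e^{\im 2\pi\{n\theta_j\}}\bigr)^{R b_{i,j}}.
\]
The factor $g_j^{\,b_{i,j}r}$ is again an $n$-independent algebraic constant, and $\bigl(e^{\im 2\pi\{n\theta_j\}}\bigr)^{R b_{i,j}}$ is a nonnegative power of the $(2j-1)$-st coordinate $e^{\im 2\pi\{n\theta_j\}}$ of $\sigma(\bm\theta_n)$ when $b_{i,j}\ge 0$, and of the $2j$-th coordinate $e^{-\im 2\pi\{n\theta_j\}}$ when $b_{i,j}<0$. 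Collecting the constants into a coefficient and the remaining factors into a monomial in the $2m$ variables yields a polynomial $p_{i,r}$ with $\gamma_i^{\,nR+r} = p_{i,r}(\sigma(\bm\theta_n))$ for all $n$; multiplying by $\rho_i^{\,nR+r}$ gives the desired $\lambda_i^{\,nR+r} = \rho_i^{\,nR+r}\,p_{i,r}(\sigma(\bm\theta_n))$.

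The one step that genuinely needs care --- and the main obstacle --- is the observation that the torsion of $\Gamma$ is \emph{cyclic}, because $\Gamma$ embeds in the unit circle: this is precisely what lets a single modulus $R=N$ eliminate all the roots of unity appearing across every $i$ at once, and it is also the reason one must pass to an arithmetic progression at all rather than work with the original sequence. Everything else is routine bookkeeping: extracting $N$, the $g_j$, and the integer exponents from a basis of multiplicative relations (Masser's theorem plus Smith normal form), and the elementary congruence $n\theta_j\equiv\{n\theta_j\}\pmod{\intg}$.
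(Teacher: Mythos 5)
Your proof is correct, but it takes a genuinely different route from the paper's. The paper proceeds directly: it selects a \emph{maximal multiplicatively independent subset} of the given phases, say $\{\gamma_1,\ldots,\gamma_m\}$, so that the $\theta_j$ appearing in the conclusion are simply angles of some of the input $\gamma_i$'s; for each $i>m$ it finds a dependence $\gamma_i^{k_i}=\gamma_1^{k_{i,1}}\cdots\gamma_m^{k_{i,m}}$ with $k_i>0$ and sets $R=k_{m+1}\cdots k_\ell$, so that $\gamma_i^R$ is an integer word in $\gamma_1,\ldots,\gamma_m$ and the monomials $p_{i,r}$ fall out by direct substitution. You instead invoke the structure theorem for finitely generated abelian groups: you split $\Gamma=\langle\gamma_1,\ldots,\gamma_\ell\rangle$ into its torsion part (cyclic of some order $N$, since $\Gamma$ embeds in $\com^\times$) and a free complement $\langle g_1,\ldots,g_m\rangle$, take $R=N$, and read off the $p_{i,r}$ from the $\zeta^{a_i}\prod g_j^{b_{i,j}}$ decomposition. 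Both start from the same effectiveness ingredient --- Masser's bound applied to $G(\gamma_1,\ldots,\gamma_\ell)$ --- and produce the same rank $m$. Your decomposition is more canonical and yields the minimal valid $R$ (the exponent of the torsion), and it cleanly isolates why a single modulus suffices across all $i$ simultaneously (cyclicity of the torsion); the paper's approach avoids Smith normal form at the cost of a possibly much larger $R$ and keeps the basis angles $\theta_j$ among the originally given data, which is slightly more convenient bookkeeping downstream. One small plus of yours: you explicitly handle the degenerate case $m=0$ (all $\gamma_i$ roots of unity), which the paper glosses over.
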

	\begin{proof}
		The first step is to compute a basis of the group of multiplicative relations $G(\gamma_1,\ldots,\gamma_\ell)$ as described in \autoref{sec:kronecker}.
		Next, select a largest multiplicatively independent subset of $\{\gamma_1,\ldots,\gamma_\ell\}$.
		Without loss of generality we can take this to be $\{\gamma_1,\ldots,\gamma_m\}$ for some $m \le l$.
		Then for every $i > m$, we have that $\gamma_1,\ldots, \gamma_m, \gamma_i$ are multiplicatively independent, i.e.
		\[
		\gamma_1^{k_{i,1}}\cdots\gamma_m^{k_{i,m}} = \gamma_i^{k_i}
		\]
		for some positive integer $k_i$ and $k_{i,j} \in \intg$ for $1 \le j \le m$.
		Choose $R = k_{m+1} \cdots k_\ell$.
		Then for $1\le i \le m$, $\lambda_i^{nR+r} = \rho_i^{nR+r} (\gamma^r_i \cdot \gamma_i^{nR})$, and for $i > m$,
		\[
		\lambda_i^{nR+r} = \rho_i^{nR+r} \cdot \gamma_i^{nR+r} = \rho_i^{nR+r} \cdot \gamma_i^r \cdot \left(\gamma_1^{ \frac{R \cdot k_{i,1} } {k_i}} \cdots  \gamma_m^{\frac{R \cdot k_{i,m} }{ k_i}}\right)^n. 
		\]
		By construction of $R$, we have that $\frac{R \cdot k_{i,1} } {k_i}$ is an integer for all $i > m$.
	\end{proof}

\begin{comment}
	\begin{lemma}
		Let $u_n = c^\top M^n q$ where $M \in \rat^{d\times d}$ and $c, q \in \rat^d$.
		Suppose the eigenvalues of $M$ are $\lambda_j = \rho_j e^{i 2\pi \theta_j}$ for $1 \le j \le d$ where $\rho_j \ge 0$ and $\theta_j \in (-1,1]$.
		Let $\gamma_1,\ldots,\gamma_k$ be a basis of $e^{i 2\pi \theta_1},\ldots,e^{i 2\pi \theta_d}$.
		We can compute $L > 0$ and polynomials $p_0, \ldots, p_{L-1}$ such that for every $n = qL + r$ with $q > 0$ and $0 \le r < L$,
		\[
		u_n = p_r(n, \gamma_1^{-n},\ldots, \gamma^{-n}_{k},\gamma_1^n,\ldots,\gamma_k^n).
		\]
	\end{lemma}
\end{comment}

\subsection{Polynomial weight-functions}
\label{sec:polynomial_MP}

In this section fix $M \in \rat^{d\times d}$, $q \in \rat^d$, and a polynomial $p \in \rat[X_1,\ldots,X_d]$.
We prove the following in this section.
\begin{theorem}
	\label{thm:mean-payoff-polynomial-weight}
	It is decidable whether the mean payoff $\MP_w(M,q)$ 
	exists, in which case it is rational and effectively computable.
\end{theorem}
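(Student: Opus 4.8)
The plan is to recognise $\MP_p(M,q)$ as the long-run average (Ces\`aro limit) of a rational linear recurrence sequence whose data is computable, and to read off both its existence and its value from the exponential polynomial representation of that sequence. As already observed in \autoref{sec:lrs}, $u_n \coloneqq p(M^n q)$ is a rational LRS: each coordinate $e_i^\top M^n q$ is a rational LRS by Cayley--Hamilton, and rational LRS are closed under $+$ and $\cdot$. From $M$ and $q$ one computes a recurrence for $u_n$, hence its characteristic polynomial, and then---by factoring that polynomial over $\rat$ and solving a linear system on finitely many initial terms $u_0,u_1,\dots$---its exponential polynomial form $u_n = \sum_{j=1}^{m} p_j(n)\lambda_j^n$ with the $\lambda_j$ distinct and nonzero and each $p_j \in \alg[x]$ nonzero. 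By definition $\MP_p(M,q) = \lim_{n\to\infty}\tfrac1n\sum_{k=0}^{n-1}u_k$.

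I would then prove that $\MP_p(M,q)$ exists (as a finite real) exactly when: (i) $|\lambda_j|\le 1$ for all $j$; (ii) every $\lambda_j$ on the unit circle with $\lambda_j\ne 1$ has $\deg p_j=0$; and (iii) if some $\lambda_{j_0}=1$ occurs, then $p_{j_0}$ is constant; and that then $\MP_p(M,q)=p_{j_0}$ if $1$ occurs among the $\lambda_j$ and $0$ otherwise. The ``if'' direction is the easy one: writing $\tfrac1n\sum_{k<n}u_k=\sum_j\tfrac1n\sum_{k<n}p_j(k)\lambda_j^k$ and using the closed forms $\sum_{k<n}p_j(k)\lambda_j^k = Q_j(n)\lambda_j^n+c_j$ for $\lambda_j\ne1$ (with $\deg Q_j=\deg p_j$) and $\sum_{k<n}p_{j_0}(k)=p_{j_0}n$ for $\lambda_{j_0}=1$, one sees each summand with $|\lambda_j|<1$ or with $|\lambda_j|=1,\lambda_j\ne1$ tends to $0$, while the $\lambda_{j_0}=1$ summand tends to $p_{j_0}$. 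Rationality of the value is a Galois argument: since $u_n\in\rat$ for all $n$, every automorphism of $\alg/\rat$ fixes $u_n$ and hence permutes the pairs $(p_j,\lambda_j)$ by uniqueness of the exponential polynomial form; as $1\in\rat$ is fixed, so is $p_{j_0}$, whence $p_{j_0}\in\rat$.

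The ``only if'' direction---showing that a violation of (i)--(iii) genuinely destroys the limit, i.e.\ ruling out accidental cancellations---is the step I expect to be the main obstacle. Here I would pass to the partial-sum sequence $S_n=\sum_{k<n}u_k$, again a rational LRS, whose characteristic roots lie among the $\lambda_j$ together with $1$, with exponential polynomial form $S_n = g(n)+\sum_{j:\lambda_j\ne1}Q_j(n)\lambda_j^n$ where $g$ is the real polynomial collecting the $1^n$-contributions (of degree $\deg p_{j_0}+1$ if $1$ occurs in $u_n$) and $\deg Q_j=\deg p_j$ with $Q_j\ne 0$; thus $\MP_p(M,q)=\lim_n S_n/n$. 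If some $|\lambda_j|>1$, then $S_n$ retains a characteristic root of modulus $\rho>1$ with nonzero polynomial coefficient; isolating the modulus-$\rho$ terms, which equal $\rho^n$ times an LRS with unit-modulus roots, and applying \autoref{thm:kronecker-cor3} to its top-degree coefficient sequence (a nonzero unit-modulus LRS by \autoref{thm:exp-poly-of-id-zero})---or, alternatively, invoking \autoref{thm:lrs-growth-s-units} directly---gives $\limsup_n|S_n|/n=\infty$, so no finite limit. If all $|\lambda_j|\le1$ but (ii) fails, let $D\ge1$ be the largest degree of $p_j$ over unit-modulus roots $\lambda_j\ne1$; then $S_n/n = g(n)/n + n^{D-1}v_n + (\text{lower-order oscillating terms})$, where $v_n$ (the degree-$D$ leading part) is a nonzero unit-modulus LRS, so by \autoref{thm:kronecker-cor3} $|v_n|>c$ for infinitely many $n$, and since a fixed polynomial cannot cancel an unboundedly oscillating term the quotient has no finite limit (for $D=1$ one additionally uses that $v_n$ cannot converge, applying \autoref{thm:kronecker-cor3} to the nonzero unit-modulus LRS $v_n-v_{n+1}$). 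Finally, if (i) and (ii) hold but $p_{j_0}$ is non-constant of degree $\delta\ge1$, then $g$ is a real polynomial of degree $\delta+1\ge2$ while all other terms of $S_n$ are bounded, so $S_n/n\to\pm\infty$.

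To conclude, conditions (i)--(iii) are decidable: computing the $\lambda_j$, deciding whether $|\lambda_j|$ is $<1$, $=1$, or $>1$, and reading off the degrees of the $p_j$ are all effective operations on algebraic numbers. Hence the algorithm is: compute the exponential polynomial form of $u_n=p(M^nq)$, test (i)--(iii), and if they hold output the rational number $p_{j_0}$ (or $0$ if $1$ does not occur among the $\lambda_j$), and otherwise report that the mean payoff does not exist. The delicate part throughout is the non-cancellation analysis of the preceding paragraph, which is where \autoref{thm:kronecker-cor3} and \autoref{thm:exp-poly-of-id-zero} do the real work.
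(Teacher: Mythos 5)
Your proposal is correct and follows the same overall architecture as the paper: reduce to the Ces\`aro limit $\lim_n S_n/n$ of a rational LRS of partial sums, read off existence and the value from the exponential polynomial representation, and obtain rationality by the automorphism/uniqueness argument; the paper packages this as \autoref{thm:LRS_poly_weight} plus a standalone lemma (\autoref{thm:MP_LRS}) about $\lim_n u_n/n$ for an arbitrary rational LRS, rather than as your explicit conditions (i)--(iii) on the weight sequence, but the content is the same. The genuinely different ingredient is the crux non-cancellation step: the paper isolates the top-degree coefficient of the partial-sum LRS, a unit-modulus LRS $w_n=\sum_j c_{l,j}\lambda_{l,j}^n$, and invokes Lemma~4 of \cite{braverman} (if some $\lambda_{l,j}\ne 1$ then $w_n$ is below some $a$ and above some $b>a$ infinitely often), whereas you use \autoref{thm:kronecker-cor3}, so your route stays inside the paper's own preliminaries. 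One caveat: for $D\ge 2$ your stated justification --- $|v_n|>c$ infinitely often, plus ``a fixed polynomial cannot cancel an unboundedly oscillating term'' --- is not quite sufficient, because $|v_n|>c$ infinitely often is compatible with $v_n$ converging to a nonzero limit, in which case $n^{D-1}v_n$ has polynomial-like leading behaviour that $g(n)/n$ could in principle track. The clean argument is exactly the one you reserve for $D=1$, and it works for every $D\ge 1$: if $S_n/n$ converged, then $S_n/n^{D}$ would converge, forcing $v_n$ to converge and hence $v_n-v_{n+1}\to 0$; but $v_n-v_{n+1}=\sum_j b_j(1-\lambda_j)\lambda_j^n$ is a unit-modulus LRS with nonzero coefficients, hence not identically zero by \autoref{thm:exp-poly-of-id-zero}, and \autoref{thm:kronecker-cor3} gives $|v_n-v_{n+1}|>c'$ infinitely often, a contradiction. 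With that one adjustment your argument is complete, and the remaining steps (effective computation of the exponential polynomial form, effective comparison of $|\lambda_j|$ with $1$, and the Galois argument for rationality of the limit) coincide with the paper's.
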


In order to analyse the mean payoff of the orbit, first recall from \autoref{sec:lrs} that the sequence $(p(M^n q))_{n\in\nat}$ is an LRS over $\rat$.
The following lemma states that the sequence of partial sums of the weights is also a rational LRS. 

\begin{lemma}
\label{thm:LRS_poly_weight}
The sequence $u_n = \sum_{k=0}^n p(M^k q)$ is an LRS over $\rat$.
\end{lemma}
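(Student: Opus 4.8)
The plan is to show that the partial-sum sequence $u_n = \sum_{k=0}^n p(M^kq)$ satisfies a linear recurrence over $\rat$ by exhibiting it as a component of an orbit of a single rational matrix. Set $v_n = p(M^nq)$, which we already know (from Section~\ref{sec:lrs}) is an LRS over $\rat$; say it satisfies a recurrence with characteristic polynomial $c(x) \in \rat[x]$ of degree $D$, so $\seq{v_n}$ can be generated as $v_n = e_1^\top C^n z$ for the companion matrix $C \in \rat^{D\times D}$ of $c$ and a suitable initial vector $z \in \rat^D$. The key observation is that $u_n - u_{n-1} = v_n$, i.e.\ $\seq{u_n}$ is obtained from $\seq{v_n}$ by ``summation,'' and summation corresponds to multiplying the characteristic polynomial by the factor $(x-1)$.

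Concretely, I would carry out the following steps. First, recall that $\seq{v_n}$ is an LRS over $\rat$, and fix a recurrence $v_{n+D} = \sum_{i=0}^{D-1} a_i v_{n+i}$ with $a_i \in \rat$. Second, I would verify directly that $\seq{u_n}$ satisfies the recurrence obtained by applying the operator $(E - 1)$ composed with the annihilating operator of $\seq{v_n}$, where $E$ is the shift: if $c(E) v_n = 0$ for all $n$, then since $(E-1)u_n = u_{n+1} - u_n = v_{n+1}$, applying $c(E)$ gives $c(E)(E-1) u_n = c(E) v_{n+1} = 0$ for all $n$. Hence $\seq{u_n}$ satisfies the linear recurrence with characteristic polynomial $(x-1)c(x) \in \rat[x]$, whose coefficients are rational. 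Third, I would note the base cases are automatically handled since the recurrence holds for all $n \ge 0$ and $u_0, \ldots, u_D$ are rational (being finite sums of the rational numbers $p(M^kq)$). Alternatively, and perhaps more in the spirit of the companion-matrix bookkeeping used elsewhere in the paper, one can write the pair $(u_n, v_n, \ldots, v_{n-D+1})$ as the orbit under the block matrix $\begin{bmatrix} 1 & e_1^\top \\ \zerovec & C \end{bmatrix}$ applied to $(u_0, z^\top)^\top$, which makes the LRS property transparent via Cayley--Hamilton.

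There is no real obstacle here; the only thing to be slightly careful about is the edge case where $\seq{v_n}$ is eventually zero (so $\seq{u_n}$ is eventually constant), which still trivially yields an LRS over $\rat$, and the fact that $1$ may or may not already be a root of $c(x)$ — this does not affect the argument, since we only need \emph{some} rational recurrence, not the minimal one. I expect the write-up to be short: state the annihilating operator of $\seq{v_n}$, multiply by $(x-1)$, and check the resulting recurrence is satisfied by $\seq{u_n}$ with rational coefficients and rational initial terms.
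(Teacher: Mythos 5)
Your proposal is correct and is essentially the paper's own argument: the paper writes out the recurrence $u_{n+d+1} = u_{n+d} + a_{d-1}(u_{n+d}-u_{n+d-1}) + \cdots + a_0(u_{n+1}-u_n)$, which is exactly your observation that $\seq{u_n}$ is annihilated by $(x-1)c(x)$ where $c$ annihilates $\seq{p(M^nq)}$. The companion-matrix block construction and the edge-case remarks are fine but unnecessary extras.
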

\begin{proof}
As discussed in \autoref{sec:lrs}, $w_n = p(M^n q)$ defines a rational LRS.
Suppose $\seq{w_n}$ satisfies a recurrence relation
\[
w_{n+d} = a_0w_{n} + \ldots + a_{d-1} w_{n+d-1}
\]
where $a_0,\ldots,a_{d-1} \in \rat$.
Then
\[
u_{n+d+1} = u_{n+d} + a_{d-1}(u_{n+d}-u_{n+d-1}) + \cdots + a_{0}(u_{n+1}-u_n)
\]
and hence $\seq{u_n}$ itself is a rational LRS of order at most $d+1$.
\end{proof}

Computing $\MP_w(M,q)$ therefore boils down to computing  $\lim_{n \to \infty} u_n/n$ for a rational LRS $(u_n)_{n\in\nat}$.

\begin{restatable}{lemma}{theoremfive}
\label{thm:MP_LRS}
	Given a rational LRS $\seq{u_n}$, it is decidable whether $\lim_{n \to \infty} \frac{u_n}{n}$ exists.
	When the limit exists, it is rational and effectively computable.
\end{restatable}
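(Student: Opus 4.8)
The plan is to reduce the question to the asymptotics of a single rational LRS and to analyse the latter through its exponential polynomial representation (as discussed in \autoref{sec:lrs}). First I would note that it is decidable whether $\seq{u_n}$ is eventually zero, in which case the limit exists and equals $0$; so assume $\seq{u_n}$ is not eventually zero and compute its exponential polynomial form $u_n = \sum_{j=1}^{m} p_j(n)\lambda_j^n$, with the $\lambda_j \in \alg$ distinct and nonzero and the $p_j \in \alg[x]$ nonzero, together with $\rho = \max_j |\lambda_j|$ (all effectively computable).

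The case $\rho > 1$ is disposed of by \autoref{thm:lrs-growth-s-units}: since $\seq{u_n}$ is not eventually zero, $u_n \neq 0$ for infinitely many $n$, and for those $n$ large enough $|u_n| > \bigl(\tfrac{\rho+1}{2}\bigr)^n$, so $|u_n|/n \to \infty$ along a subsequence and no finite limit can exist. When $\rho \le 1$ I would split $u_n = v_n + r_n$ into the part $v_n$ supported on the eigenvalues of modulus exactly $1$ and the part $r_n$ on those of modulus $<1$; since $r_n \to 0$ (hence $r_n/n \to 0$), the limit of $u_n/n$ exists iff that of $v_n/n$ does, with the same value. Let $D$ be the largest degree of a $p_j$ attached to a modulus-$1$ eigenvalue (set $D=-\infty$ if there are none). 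If $D \le 0$ then $v_n$ is bounded, so $v_n/n \to 0$ and the limit exists and equals $0$. If $D \ge 1$, write $v_n = n^D g_n + O(n^{D-1})$, where $g_n = \sum c_j \lambda_j^n$ ranges over the modulus-$1$ eigenvalues whose polynomial has degree exactly $D$ and $c_j$ is the corresponding leading coefficient; by \autoref{thm:exp-poly-of-id-zero}, $g_n$ is not identically zero, and all its characteristic roots lie on the unit circle, so \autoref{thm:kronecker-cor3} gives a $c>0$ with $|g_n| > c$ for infinitely many $n$. If $D \ge 2$ this forces $|v_n|/n \ge c\,n^{D-1} - O(n^{D-2}) \to \infty$ along a subsequence, so the limit does not exist.

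The remaining case $D = 1$ is the \emph{crux}. Here $v_n/n = g_n + o(1)$, so the limit exists iff $\seq{g_n}$ converges. I claim $\seq{g_n}$ converges iff it is a constant sequence: if $g_n \to L$ then $g_{n+1}-g_n \to 0$, but $g_{n+1}-g_n = \sum c_j(\lambda_j-1)\lambda_j^n$ is again an LRS with all characteristic roots on the unit circle, so by \autoref{thm:kronecker-cor3} it must be identically zero, whence (by uniqueness of the exponential polynomial form, using $c_j \neq 0$) $\lambda_j = 1$ for every index $j$ occurring in $g_n$; as the $\lambda_j$ are distinct there is exactly one such index $j_0$, with $\lambda_{j_0} = 1$, so $g_n \equiv c_{j_0}$ and $L = c_{j_0}$. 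This condition --- exactly one modulus-$1$ eigenvalue carries a degree-$1$ polynomial, and that eigenvalue equals $1$ --- is effectively checkable. Finally, rationality of the answer when the limit exists: in all but the last case the value is $0$, and in the last case $L = c_{j_0}$ is the leading coefficient of $p_{j_0}$; since $u_n \in \rat$ for all $n$ and $\lambda_{j_0} = 1$ is rational, the Galois group of $\alg/\rat$ permutes the summands of the exponential polynomial form and fixes $p_{j_0}(n)\cdot 1^n$, so $p_{j_0} \in \rat[x]$ and $L \in \rat$ (equivalently, $p_{j_0}(n)$ equals $c^\top M^n \Pi q$ for the rational spectral projection $\Pi$ onto the generalised eigenspace of the companion matrix for eigenvalue $1$), and it is computable.

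I would flag the $D=1$ analysis --- the equivalence ``converges iff constant'', resting on the Kronecker-type \autoref{thm:kronecker-cor3} --- as the main obstacle, with the rationality of the limiting value a close second; everything else is bookkeeping on the exponential polynomial form together with the growth bound of \autoref{thm:lrs-growth-s-units}.
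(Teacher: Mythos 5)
Your proof is correct and follows the same overall skeleton as the paper's (exponential polynomial form, \autoref{thm:lrs-growth-s-units} to kill the case of spectral radius $>1$, discarding the modulus-$<1$ part, then analysing the top-degree term over the unit-circle eigenvalues, and the same automorphism/Galois argument for rationality of the limit). The genuine difference is at the crux you flagged: where the paper invokes an external oscillation result (\cite{braverman}, Lemma~4) stating that $\sum_j c_j\lambda_j^n$ with some $\lambda_j\neq 1$ on the unit circle takes values below some $a$ and above some $b>a$ infinitely often, you instead derive the needed non-convergence entirely from the paper's own Kronecker-based \autoref{thm:kronecker-cor3}: for $D\ge 2$ you apply it directly to the leading sum $g_n$, and for $D=1$ you apply it to the difference sequence $g_{n+1}-g_n=\sum_j c_j(\lambda_j-1)\lambda_j^n$, concluding via \autoref{thm:exp-poly-of-id-zero} (and $c_j\neq 0$) that convergence forces a single term with $\lambda_{j_0}=1$. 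This ``converges iff constant'' argument is sound and makes the proof self-contained within the paper's toolkit, at the cost of a slightly longer case split; the paper's route handles all degrees $l\ge 1$ uniformly through the cited lemma. The only minor points worth a sentence in a final write-up are the (routine) decidability of eventual vanishing of an LRS and the observation that in the $D=1$ case the other modulus-one eigenvalues contribute only bounded, hence negligible, terms — both of which you implicitly use correctly.
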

\begin{proof}
	Suppose $\seq{u_n}$ is not eventually zero and write
	$u_n = \sum_{i=1}^m p_i(n)\lambda_i^n$ where $m > 0$, $\lambda_1,\ldots,\lambda_m$ are pairwise distinct, $|\lambda_1| \ge \ldots \ge |\lambda_m| > 0$, and  each $p_i$ is not identically zero.
	By \autoref{thm:lrs-growth-s-units}, for $\epsilon>0$ we have  that $|u_n| > (|\lambda_1|-\epsilon)^n$ for infinitely many $n$.
	Hence if $|\lambda_1| > 1$, then the limit does not exist.
	Similarly, if $|\lambda_1| < 1$, then the limit is zero.
	Suppose therefore $|\lambda_1| = 1$.
	Let $k$ be the largest integer such that $|\lambda_i| = 1$ for all $i \le k$, and define $v_n = \sum_{i=1}^k p_i(n)\lambda_i^n$.
	It suffices to consider $\lim_{n \to \infty} v_n/n$ as $\lim_{n \to \infty} \sum_{i=k+1}^m p_i(n)\lambda_i^n = 0$.
	
	Write $v_n = \sum_{i=0}^l n^i \sum_{j=1}^{k_i} c_{i,j} \lambda_{i,j}^n$ where $\sum_{j=1}^{k_i} c_{i,j} \lambda_{i,j}^n$ is in the exponential polynomial form for all $i$.
	If $l = 0$, then $\seq{v_n}$ is bounded and $\lim_{n \to \infty} v_n/n = 0$.
	Assume therefore $l \ge 1$.
	Let 
	\[
	w_n = \sum_{j=1}^{k_l} c_{l,j} \lambda_{l,j}^n.
	\]
	By construction, $k_l \ge 1$, and by \autoref{thm:exp-poly-of-id-zero}, $\seq{w_n}$ is not identically zero.
	It is shown in \cite[Lemma 4]{braverman} that if $\lambda_{l,j} \ne 1$ for some~$j$ (which is implied by $k_l > 1$), then there exist $a,b \in \rel$ such that $a < b$, $w_n < a$ for infinitely many~$n$, and $w_n > b$ for infinitely many~$n$.
	Hence $\lim_{n \to \infty} v_n/n$ can exist only if $k_l = 1$ and $\lambda_{l,1} =1$.
	Suppose indeed that $k_l = 1$ and $\lambda_{l,1} =1$.
	Then $\lim_{n \to \infty}v_n/n$, when it exists, is equal to $\lim_{n \to \infty} \frac{n^lc_{l,1}}{n}$.
	Since $l \ge 1$, the latter limit exists if and only if $l = 1$, in which case it is equal to $c_{l,1}$ .
	
	To prove rationality of the limit,  suppose $\lim_{n \to \infty}v_n/n$ exists, in which case it is equal to $c_{l,1}$.
	There must exists $1 \le i \le k$ such that $\lambda_i = 1$ and $p_i(n)$ is equal to either $c_{l,1}$ or $nc_{l,1}$.
	Recall that $\alpha \in \com$ is rational if and only if it is fixed by every automorphism of $\com$.
	Since $\seq{u_n}$ takes rational values, $\sigma(u_n) = u_n$ for all $n \in \nat$ and $\sigma$ an automorphism of~$\com$.
	Moreover, $\sigma(u_n) = \sum_{i=1}^m\sigma(p_i(n))\sigma(\lambda_i)^n$ and
	$\sigma(\lambda_{l,1}) = \lambda_{l,1} = 1$ for every automorphism $\sigma$. 
	By the uniqueness of the exponential polynomial representation, $\sigma(c_{l,1}\lambda_{l,1}^n) = c_{l,1}\lambda_{l,1}^n$ for all $n$ and $\sigma$, which implies that $c_{l,1}$ is rational.
\end{proof}
Our main result (\autoref{thm:mean-payoff-omin-main}) now follows immediately from \autoref{thm:MP_LRS} and \autoref{thm:LRS_poly_weight}.
\qed

\paragraph*{Complexity of computing the mean payoff}
Recall that if the limit of $\seq{\frac{u_n} n}$ exists then it is equal to a coefficient of some $p_i$ appearing in the exponential polynomial solution of $\seq{u_n}$.
Hence the complexity of computing the limit is bounded by the complexity of computing the exponential polynomial.
If the description length of $\seq{u_n}$ is $\mathcal{I}$ and its order is~$d$, the time required to compute the exponential polynomial representation of $u_n$ is polynomial in $\mathcal{I}^d$ \cite[Chapter~2.2]{karimov-thesis}. 
In particular, computing the mean payoff requires polynomial time if we assume the dimension $d$ of the LDS to be fixed.

\subsection{Bounded orbits}
\label{sec:bounded_MP}

In this section, fix a continuous weight function $w \st \rel^d \to \rel$, as well as $M \in \rat^{d\times d}$ and $q \in \rat^d$ such that  $(M^nq)_{n \in  \nat}$ is bounded.
We will show that $\MP_w(M,q)$ has a simple integral representation.
Our approach can be explained geometrically as follows.
A bounded orbit of an LDS approaches a ``limit shape''--which is the set of accumulation points of the orbit-- closer and closer.
This allows us to express the mean payoff in terms of an integral of the weight function over this limit shape. 
This integral computes the ``average'' value of the weight function on the limiting shape.
Of course, we have to carefully ensure that we also know how ``frequently'' the orbit approaches different parts of the limiting shape.
Let us look at an example that illustrates our approach.

\begin{example}
	Let $w \colon \rel^3\to\rel$ be a continuous weight function and consider the LDS 
	\[
	M = \begin{bmatrix}
		3/5 & 4/5 & 0 \\
		-4/5 & 3/5 & 0 \\
		0 & 0 & 1/2
	\end{bmatrix}
	\quad \text{ and } \quad 
	q=\begin{bmatrix}
		1\\
		0\\
		1
	\end{bmatrix}.
	\]
	The first two coordinates evolve under a repeated application of a rotation.
	In the complex plane, this rotation is given by multiplication with $3/5-4/5 i$.
	As $3/5-4/5 i$ is not a root of unity, the orbit never reaches a point with $(1,0)$ in the first two coordinates again. In fact, the first two components of the orbit are dense in the unit circle in $\rel^2$.
	Furthermore, by, for example, Weyl's equidistribution theorem, each sub-interval of the unit circle is visited with frequency proportional to its length.
	The third component is halved at every step and converges to $0$. As the weight function is continuous, we can hence treat the third coordinate as equal to $0$ when determining the mean payoff.
	More formally, the set of accumulation points of the orbit is
	\[
	L=\{v = (v_1,v_2,v_3) \in\rel^3 \mid v_3=0, |v|=1\}
	\] 
	which can be parametrised via 
	$g(\theta) = (
		\cos(2\pi \theta),
		\sin(2\pi \theta),
		0)$, $g \st \torus \to \rel^3$.
	As this parametrisation moves through the circle with constant speed reflecting the fact that the orbit is ``equally distributed'' over the circle in the first two components, we can now express the mean payoff of the orbit with respect to the weight function $w$ as
	\[
	\MP_w(M,q) = \int_0^1 w (
	\cos(2\pi \theta),
	\sin(2\pi \theta),
	0) \,\,d\theta.
	\]
\end{example}
We now move on to the proof.
For $1 \le i \le d$, let $u^{(i)}_n = e_i M^n q$, where $e_i$ is the $i$th standard basis vector.
We will need the following lemma.

\begin{lemma}
	For all $1\le i \le d$,
	\[
	u^{(i)}_n = v^{(i)}_n + w^{(i)}_n
	\]
	where $(v^{(i)}_n)_{n\in\nat}$ is a simple LRS over $\ralg$ whose characteristic roots all lie on $\torus$, and $(w^{(i)}_n)_{n\in\nat}$ is an LRS over $\ralg$ whose characteristic roots all have magnitude less than one, i.e. $\lim_{n\to \infty} w^{(i)}_n = 0$.
\end{lemma}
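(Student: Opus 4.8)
The plan is to decompose the orbit coordinate-wise using the exponential polynomial representation of each LRS $u^{(i)}_n = e_i^\top M^n q$, and then split the characteristic roots according to their magnitude. Since $M \in \rat^{d\times d}$ and $q \in \rat^d$, each $u^{(i)}_n$ is a rational (hence real-valued) LRS, and by the material in \autoref{sec:lrs} it admits a unique exponential polynomial representation $u^{(i)}_n = \sum_{j} p_{i,j}(n)\lambda_{i,j}^n$ with the $\lambda_{i,j}$ being the distinct nonzero characteristic roots and the $p_{i,j} \in \alg[x]$ nonzero. The first key point is that the orbit $(M^n q)_{n\in\nat}$ is bounded by hypothesis; I would use this to argue that for every root $\lambda_{i,j}$ with $|\lambda_{i,j}| = 1$, the associated polynomial $p_{i,j}$ must be constant, and that there is no root with $|\lambda_{i,j}| > 1$. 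Indeed, if some root on the unit circle carried a nonconstant polynomial, or if there were a root of magnitude exceeding $1$, then by \autoref{thm:lrs-growth-s-units} (or a direct dominant-term argument combined with \autoref{thm:kronecker-cor3} to handle the unimodular-but-nonconstant case) the sequence $u^{(i)}_n$ would be unbounded, contradicting boundedness of the orbit.

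Given this, I would define $v^{(i)}_n \coloneqq \sum_{j \,:\, |\lambda_{i,j}| = 1} c_{i,j}\lambda_{i,j}^n$ (with $c_{i,j}$ the constant values of those $p_{i,j}$) and $w^{(i)}_n \coloneqq \sum_{j \,:\, |\lambda_{i,j}| < 1} p_{i,j}(n)\lambda_{i,j}^n$. By construction $u^{(i)}_n = v^{(i)}_n + w^{(i)}_n$. The sequence $(w^{(i)}_n)_{n\in\nat}$ is an LRS whose characteristic roots all have magnitude strictly less than one, so $\lim_{n\to\infty} w^{(i)}_n = 0$ (each term $p_{i,j}(n)\lambda_{i,j}^n \to 0$); it is an LRS over $\ralg$ because it is the difference of two LRSs over $\ralg$, namely $u^{(i)}_n$ (rational, hence over $\ralg$) and $v^{(i)}_n$. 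For the latter I must check $v^{(i)}_n$ takes values in $\ralg$: this follows since $v^{(i)}_n = u^{(i)}_n - w^{(i)}_n$ and both are real, and it is an LRS over $\ralg$ since its characteristic roots (the unimodular $\lambda_{i,j}$, which are algebraic) and its defining data are algebraic — more carefully, $v^{(i)}_n$ is the "unimodular part" of a rational LRS, which is known to satisfy a recurrence over $\ralg$ (one can take the characteristic polynomial $\prod_{|\lambda_{i,j}|=1}(x - \lambda_{i,j})$, whose coefficients are algebraic real numbers because the set of unimodular roots is closed under complex conjugation by \autoref{thm:real-lrs-galois-condition}, making the polynomial real). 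Finally, $(v^{(i)}_n)$ is simple: its characteristic roots $\lambda_{i,j}$ are pairwise distinct by construction (they are a subset of the distinct roots appearing in the exponential polynomial form), so its characteristic polynomial has no repeated root.

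The main obstacle I anticipate is the careful bookkeeping of fields: verifying that the "unimodular part" $v^{(i)}_n$ genuinely satisfies a recurrence with coefficients in $\ralg$ (as opposed to just $\alg$) and is itself real-valued. The cleanest route is to invoke \autoref{thm:real-lrs-galois-condition}: since $u^{(i)}_n$ is real, the multiset of pairs $(p_{i,j},\lambda_{i,j})$ is stable under complex conjugation, and conjugation preserves $|\lambda_{i,j}|$, so the sub-sum $v^{(i)}_n$ over unimodular roots is also conjugation-stable and hence real; and $\prod_{|\lambda_{i,j}|=1}(x-\lambda_{i,j})$ is a real polynomial with algebraic coefficients, i.e. coefficients in $\ralg$, which serves as a recurrence for $v^{(i)}_n$. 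The other place to be slightly careful is ruling out roots of magnitude $>1$ and nonconstant polynomials on unimodular roots purely from boundedness; here \autoref{thm:lrs-growth-s-units} handles the super-unit case directly, and for a unimodular root with a nonconstant polynomial one isolates the top-degree part and applies \autoref{thm:kronecker-cor3} to see $|u^{(i)}_n|$ grows like $n^{\deg}$ along a subsequence, again contradicting boundedness.
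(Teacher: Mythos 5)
Your proposal follows essentially the same route as the paper: write $u^{(i)}_n$ in exponential polynomial form, rule out roots of magnitude $>1$ via \autoref{thm:lrs-growth-s-units} and nonconstant polynomials on unimodular roots via \autoref{thm:kronecker-cor3}, and split the sum by root magnitude. If anything, you are slightly more careful than the paper's own argument — you invoke \autoref{thm:kronecker-cor3} only on unimodular roots and you explicitly verify (via \autoref{thm:real-lrs-galois-condition}) that $v^{(i)}_n$ and $w^{(i)}_n$ are real-valued LRSs over $\ralg$, details the paper leaves implicit.
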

\begin{proof}
	Fix $2 \le i \le d$, and write 
	\[
	u^{(i)}_n = \sum_{j=1}^\ell p_j(n)\lambda_j^n
	\]
	where the right-hand side is in the exponential polynomial form.
	If $|\lambda_j| >~1$ for some $j$, then by \autoref{thm:lrs-growth-s-units}, $|u^{(i)}_n|$ diverges to infinity, contradicting boundedness. 
	Therefore, $|\lambda_j| \le 1 $ for all $j$.
	Let $J = \{j \st \lambda_j \in \torus\}$ and $m = \max_{1 \le j \le \ell} \deg(p_j)$.
	We will show that $m = 0$.
	Write
	\[
	u^{(i)}_n = n^m \sum_{j \in J_1} c_j \lambda_j^n + \sum_{j=1}^\ell q_j(n)\lambda_j^n
	\]
	where $J_1 = \{j \st \deg(p_j)=m\}$ and $\deg(q_j) < m$ for all $j \in J$.
	By \autoref{thm:kronecker-cor3}, there exists $c > 0$ such that $|\sum_{j \in J_1} c_j \lambda_j^n | > c$ for infinitely many $n \in \nat$.
	It follows that if $m > 0$, then the first summand determines the sign of $u_n^{(i)}$ and hence $\liminf_{n\to \infty} u^{(i)}_n = \infty$, which again contradicts the boundedness assumption. 
\end{proof}

Let $v^{(i)}_n, w^{(i)}_n$  be as above.
We have that for all $N \in \nat$,
\[
\MP_w(M,q) =
\lim_{n\to \infty} \frac{1}{n} \sum_{k=N}^{N+n-1} w(M^k q).
\]
From the continuity of $w$ and the fact that $\lim_{n\to\infty} w_n^{(i)} = 0$ we deduce that
\[
\MP_w(M,q) =
\lim_{n\to \infty} \frac{1}{n} \sum_{k=0}^{n-1} w(v_k^{(1)}, \ldots, v_k^{(d)}).
\]
Next, let $\Lambda = \{\lambda_1,\ldots,\lambda_\ell\} \subset \alg$ be such that $|\lambda_i| = 1$ for all $i$ and the characteristic roots of every $(v_n^{(i)})_{n\in\nat}$ belong to $\Lambda$.
Invoking \autoref{thm:subsequences-dense} compute $\theta_1,\ldots,\theta_m \in \torus$ that are linearly independent over $\rat$, $R >0$, and the polynomials $p_{i,r}$ for $1 \le i \le \ell$ and $0 \le r < R$.
Write $\bm{\theta}_n$ for $(\{n\theta_1\}, \ldots, \{n\theta_m\})$ as usual.
We now prove the main result of this section.
\begin{theorem}
	\label{thm:mp_bounded}
	We can compute a function $f \st \rel^{m} \to \rel$ such that
	\[
	\MP_w(M,q) = \int_{\mathbf{x} \in \torus^m} f(\mathbf{x}) d \mathbf{x} 
	\]
	where $d\mathbf{x} = dx_1\cdots dx_m$.
\end{theorem}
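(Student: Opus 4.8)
The plan is to rewrite each partial-sum sequence $\seq{v^{(i)}_n}$ as a polynomial function of the shared rotation $\bm{\theta}_n$ along the arithmetic progressions $k\equiv r\pmod R$, and then apply Weyl's equidistribution theorem on each such progression. Recall from the discussion preceding the statement that
\[
\MP_w(M,q)=\lim_{n\to\infty}\frac1n\sum_{k=0}^{n-1} w\bigl(v^{(1)}_k,\ldots,v^{(d)}_k\bigr),
\]
that each $\seq{v^{(i)}_n}$ is a simple LRS over $\ralg$ with $v^{(i)}_n=\sum_{j=1}^\ell c_{i,j}\lambda_j^n$ for computable $c_{i,j}\in\alg$ and $\Lambda=\{\lambda_1,\ldots,\lambda_\ell\}$ a set of algebraic numbers of modulus $1$, and that \autoref{thm:subsequences-dense} (applied with all dilation factors equal to $1$) has produced $R>0$, reals $\theta_1,\ldots,\theta_m$ with $1,\theta_1,\ldots,\theta_m$ linearly independent over $\rat$, and polynomials $p_{j,r}$ such that $\lambda_j^{nR+r}=p_{j,r}(\sigma(\bm{\theta}_n))$ for all $j,n$ and $0\le r<R$.

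Next I would set $P_{i,r}(\mathbf z)=\sum_{j=1}^\ell c_{i,j}\,p_{j,r}(\mathbf z)$, so that $v^{(i)}_{nR+r}=P_{i,r}(\sigma(\bm{\theta}_n))$ for all $n$, and define $g_r\colon\torus^m\to\rel$ by $g_r(\mathbf x)=w\bigl(P_{1,r}(\sigma(\mathbf x)),\ldots,P_{d,r}(\sigma(\mathbf x))\bigr)$. One then verifies that each $g_r$ is a genuine real-valued continuous function: continuity is immediate since $\sigma$ is continuous, the $P_{i,r}$ are polynomials, and $w$ is continuous; and $\mathbf x\mapsto\Ima P_{i,r}(\sigma(\mathbf x))$ is continuous and vanishes on $\{\bm{\theta}_n\colon n\in\nat\}$, which is dense in $\torus^m$ by \autoref{thm:kronecker-cor1}, so $P_{i,r}\circ\sigma$ is real on all of $\torus^m$. (If a manifestly real closed form for $f$ is wanted, one rewrites $P_{i,r}\circ\sigma$ in the variables $\cos 2\pi x_j,\sin 2\pi x_j$ using the conjugate symmetry of \autoref{thm:real-lrs-galois-condition}.) The target function is then $f=\frac1R\sum_{r=0}^{R-1}g_r$, which is continuous and bounded on $\torus^m$.

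Finally I would split the Cesàro average by residue class modulo $R$: writing $n_r=|\{0\le k<n\colon k\bmod R=r\}|$,
\[
\frac1n\sum_{k=0}^{n-1}w\bigl(v^{(1)}_k,\ldots,v^{(d)}_k\bigr)=\sum_{r=0}^{R-1}\frac{n_r}{n}\cdot\frac1{n_r}\sum_{j=0}^{n_r-1}g_r(\bm{\theta}_j).
\]
As $n\to\infty$ we have $n_r/n\to 1/R$, and since each $g_r$ is continuous and $1,\theta_1,\ldots,\theta_m$ are $\rat$-linearly independent, the ``consequently'' part of \autoref{thm:weyl} gives $\frac1{n_r}\sum_{j<n_r}g_r(\bm{\theta}_j)\to\int_{\torus^m}g_r(\mathbf x)\,d\mathbf x$. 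Since the inner averages are uniformly bounded, the right-hand side converges to $\frac1R\sum_{r=0}^{R-1}\int_{\torus^m}g_r=\int_{\torus^m}f(\mathbf x)\,d\mathbf x$, which shows simultaneously that the limit defining $\MP_w(M,q)$ exists and equals the claimed integral. Computability of $f$ is inherited from the effective computability of $\Lambda$, the coefficients $c_{i,j}$, and $R,\theta_1,\ldots,\theta_m,p_{j,r}$.

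The main obstacle I anticipate is the middle step: converting the ``diagonal'' exponential-polynomial form of the $v^{(i)}_n$ into polynomial functions of the common rotation $\bm{\theta}_n$ while correctly tracking the offset $r$ supplied by \autoref{thm:subsequences-dense}, and checking that the resulting $g_r$ is honestly continuous and real-valued on the whole torus rather than merely on the orbit closure. Once the integrand $f$ is in hand, the remainder is routine Weyl-equidistribution bookkeeping across the $R$ residue classes.
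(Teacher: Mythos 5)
Your proposal is correct and follows essentially the same route as the paper: split the orbit by residue class modulo $R$, rewrite $v^{(i)}_{nR+r}$ as a polynomial in $\sigma(\bm\theta_n)$ via \autoref{thm:subsequences-dense}, and apply \autoref{thm:weyl} to each class. One small addition worth noting: you explicitly verify (via continuity plus density of $\{\bm\theta_n\}$ from \autoref{thm:kronecker-cor1}, or via conjugate symmetry from \autoref{thm:real-lrs-galois-condition}) that the composed functions $g_r = w\circ q_r\circ\sigma$ are genuinely real-valued on all of $\torus^m$, a point the paper asserts but does not spell out, and which is indeed needed for $w$ to be well-defined at these arguments.
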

\begin{proof}
	Write $v^{i,r}_n = v^{(i)}_{nR+r}$ for $1\le i \le d$ and $0 \le r < R$.
	We have that 
	\begin{align}
		\MP_w(M,q) &= \lim_{n\to \infty} \frac{1}{n} \sum_{k=0}^{n-1} w(v^{(1)}_k,\ldots,v^{(d)}_k) \\
		&= \frac 1 R \sum_{r=0}^{R-1} \lim_{n\to \infty} \frac{1}{n} \sum_{k=0}^{n-1} w(v^{1,r}_k, \ldots, v^{d,r}_k).
		\label{eq::omin-mean-payoff-1}
	\end{align}
	Applying \autoref{thm:subsequences-dense}, because $|\lambda_i| = 1$ for all $\lambda_i \in \Lambda$, we have that $\lambda_i^{nR+r} = p_{i,r}(\sigma(\bm{\theta}_n))$.
	Since each $(v^{1,r}_n)_{n\in\nat}$ is simple and $v^{1,r}_n$ is a $\alg$-linear combination of $\lambda_1^{nR+r},\ldots,\lambda_\ell^{nR+r}$, we have that 
	\begin{align*}
		w(v^{1,r}_k, \ldots, v^{d,r}_k) &= w(q_{1,r}\circ \sigma(\bm{\theta}_k),\ldots,q_{d,r} \circ \sigma (\bm{\theta}_k))
	\end{align*}
	where each $q_{i,r} \st \rel^{2m} \to \rel$ is a polynomial with algebraic coefficients.
	Thus for each $r$ there exists an entrywise polynomial function $q_r$ such that
	\[
	w(v^{1,r}_k, \ldots, v^{d,r}_k)  =  w \circ q_r \circ \sigma (\bm{\theta}_k)
	\] 
	for all $k$.
	Note that $q_r \circ \sigma$ and thus $w \circ q_r \circ \sigma$ are continuous. 
	Since the weight function $w$ is  continuous by assumption, we can apply \autoref{thm:weyl} to each summand of \eqref{eq::omin-mean-payoff-1} to obtain
	\begin{align*}
		\MP_w(M,q) &= \frac 1 R \sum_{r=0}^{R-1} \int_{\mathbf{x}\in \torus^m} w \circ q_r \circ \sigma (\mathbf{x}) d\mathbf{x}\\
		&=  \int_{\mathbf{x}\in \torus^m} \frac{w\circ q_0 \circ \sigma(\mathbf{x}) + \cdots + w \circ q_{R-1} \circ \sigma(\mathbf{x})}{R} d\mathbf{x}. \qedhere
	\end{align*}
%	That is,
%	\[
%	f(\mathbf{y}) = \frac{w \circ p_0(\mathbf{y}) + \cdots + w\circ p_{L-1}(\mathbf{y})}{R}.
%	\]
\end{proof}

\begin{comment}
	\begin{example}
		Consider $D=\begin{bmatrix}
			\lambda & 0 \\
			0 &\bar \lambda 
		\end{bmatrix}$
		for an algebraic number $\lambda$ with $|\lambda |=1$ that is not a root of unity.
		Then, $\lambda \cdot \bar\lambda =1$ is a multiplicative relation between $\lambda$ and $\bar \lambda$.
		Further, $(\lambda^k)_{k\in\nat}$ is dense in $\torus$. 
		Now, the sequence 
		$\left(
		\lambda^k ,
		\bar \lambda ^k \right)_{k\in \nat}$
		is dense in $L\coloneqq \{(x,y)\in \torus^2 \mid x\cdot y = 1\}$, but not in $\torus^2$.
		Thus, for an initial vector~$v$, the set of accumulation points of $(D^k v)_{k\in \nat}$ is $L\cdot v$ and not $\torus^2 \cdot v$.
	\end{example}
\end{comment}

\paragraph{Approximation of the mean payoff}
Although the functions $q_0,\ldots,q_{R-1},\sigma$ are simple and explicit, there is no general way to evaluate the $m$-fold multiple integral computed above.
Nevertheless, the numerical approximation of integrals is an extensively studied area (see, e.g., \cite{davis2007methods}). 
The functions $q_r, \sigma$ are differentiable and we can bound the modulus of the gradient of $q_r \circ \sigma$ on $\torus^m$.
Consequently, if the function $w$ can be approximated and is well-behaved, e.g., if $w$  is Lipschitz continuous with known upper bound for its Lipschitz constant, also the integrals that we obtained can be approximated to arbitrary precision.
For more details on conditions under which the integral can be approximated to arbitrary precision, we refer the reader to
\cite{davis2007methods}.

\subsection{O-minimal weight functions}
\label{sec:o-min-mean-payoff}

In this section, fix $M\in\rat^{d\times d}$ and $q \in \rat^d$, as well as a bounded weight function $w \st \rel^d\to [-b,b]$ that is definable in some o-minimal expansion $\mathbb{M}$ of $\rebt$.
Let $\lambda_1,\ldots,\lambda_d$ be the eigenvalues of $M$.
Compute $R$, $\theta_1,\ldots,\theta_m$ as in \autoref{thm:subsequences-dense} and write $\bm{\theta}_n = (\{n\theta_1\},\ldots,\{n\theta_m\})$.
Our main result is the following, which shows that o-minimality implies good ergodic properties.

\begin{theorem}
	\label{thm:mean-payoff-omin-main}
	We can compute a function $f \st \torus^m \to \rel$ definable in $\mathbb{M}$ such that 
	\[
	\MP_w(M,q) = \int_{\mathbf{x}\in\torus^m} f(\mathbf{x}) d\mathbf{x}
	\]
	where $d\mathbf{x} = dx_1\cdots dx_m$.
\end{theorem}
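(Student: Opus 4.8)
\emph{The plan} is to mimic the proof of \autoref{thm:mp_bounded}, trading the boundedness of the orbit for o-minimality of $\mathbb{M}$ together with boundedness of $w$. As there, I would first split the time average over residue classes modulo $R$: since $|w|\le b$, the bounded sequence $(w(M^kq))_{k\in\nat}$ has a well-defined limiting average as soon as each class average $L_r := \lim_{n}\frac1n\sum_{k=0}^{n-1} w(M^{kR+r}q)$ exists, in which case $\MP_w(M,q)=\frac1R\sum_{r=0}^{R-1}L_r$, so it suffices to compute each $L_r$. Writing each coordinate $u^{(i)}_n=e_i^\top M^nq$ in exponential-polynomial form and invoking \autoref{thm:subsequences-dense} applied to the nonzero eigenvalues of $M$ (which produces $R$, the rationally independent $\theta_1,\ldots,\theta_m$, and the monomials $p_{j,r}$), one obtains explicit functions $\Phi_r\st\rel_{\ge0}\times\torus^m\to\rel^d$ with $M^{kR+r}q=\Phi_r(k,\bm{\theta}_k)$ for all $k$, where $\Phi_r$ is built from polynomials in its first argument, the real-exponential factors $\rho_j^{tR+r}$, and the quantities $e^{\pm\im2\pi x_l}$; after expanding into real and imaginary parts it is definable in $\rexp$ augmented by bounded trigonometric functions, hence in $\mathbb{M}$. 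That $\Phi_r$ is genuinely real-valued follows, exactly as in \autoref{thm:mp_bounded}, from \autoref{thm:real-lrs-galois-condition} (conjugate characteristic roots carry conjugate coefficient polynomials, and $\sigma$ supplies both $e^{\im2\pi x_l}$ and $e^{-\im2\pi x_l}$). The only new feature relative to \autoref{thm:mp_bounded} is the presence of the factors $\rho_j^{tR+r}$, absent there since all characteristic roots lie on $\torus$; this is precisely why one passes from the semialgebraic setting to $\rexp$.

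Next I would define $g_r(\mathbf x):=\lim_{t\to\infty}w(\Phi_r(t,\mathbf x))$ for $\mathbf x\in\torus^m$ and put $f:=\frac1R\sum_{r=0}^{R-1}g_r$. That $g_r$ is well-defined, takes values in $[-b,b]$, and is definable in $\mathbb{M}$ is where o-minimality enters: for fixed $\mathbf x$ the one-variable function $t\mapsto w(\Phi_r(t,\mathbf x))$ is definable in $\mathbb{M}$, hence eventually monotone (the monotonicity theorem), so it converges in $[-\infty,\infty]$, finitely since $|w|\le b$. Definability of $g_r$, and thus of the finite sum $f$, follows from the first-order description $g_r(\mathbf x)=y\iff\forall\varepsilon>0\,\exists T\,\forall t\ge T\;|w(\Phi_r(t,\mathbf x))-y|<\varepsilon$. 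It would then remain to establish, for each $r$, the ergodic identity $L_r=\int_{\torus^m}g_r(\mathbf x)\,d\mathbf x$.

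Since $g_r$ is definable in $\mathbb{M}$ it is bounded and piecewise continuous, so $\lim_n\frac1n\sum_{k=0}^{n-1}g_r(\bm{\theta}_k)=\int_{\torus^m}g_r$ by \autoref{thm:weyl} (applicable because $1,\theta_1,\ldots,\theta_m$ are rationally independent). So it is enough to show $\frac1n\sum_{k=0}^{n-1}\big(w(\Phi_r(k,\bm{\theta}_k))-g_r(\bm{\theta}_k)\big)\to0$. Fix $\delta>0$ and consider, for $K>0$, the \emph{definable} sets
\[
V_K:=\{\mathbf x\in\torus^m : \exists\,t\ge K\ \text{with}\ |w(\Phi_r(t,\mathbf x))-g_r(\mathbf x)|\ge\delta\},
\]
which decrease with $K$ and satisfy $\bigcap_{K>0}V_K=\emptyset$ since $w(\Phi_r(t,\mathbf x))\to g_r(\mathbf x)$ pointwise. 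If $k\ge K$ and $\bm{\theta}_k\notin V_K$ then $|w(\Phi_r(k,\bm{\theta}_k))-g_r(\bm{\theta}_k)|<\delta$. Splitting the sum over the index ranges $\{k<K\}$, $\{K\le k<n:\bm{\theta}_k\in V_K\}$, and the remaining indices, bounding the first two groups using $|w|\le b$, and applying the counting form of \autoref{thm:weyl} to the Jordan-measurable set $V_K$, one obtains
\[
\limsup_{n\to\infty}\frac1n\sum_{k=0}^{n-1}\big|w(\Phi_r(k,\bm{\theta}_k))-g_r(\bm{\theta}_k)\big|\;\le\;\delta+2b\,\mathcal{L}(V_K).
\]
Finally, each $V_K$ is definable, hence Lebesgue-measurable, and the countable subfamily $(V_n)_{n\in\nat}$ decreases to $\emptyset$; by continuity from above of Lebesgue measure $\mathcal{L}(V_n)\to0$, so $\mathcal{L}(V_K)\le\mathcal{L}(V_{\lfloor K\rfloor})\to0$. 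Letting $K\to\infty$ and then $\delta\to0$ gives the claim, with $f=\frac1R\sum_r g_r$ the desired $\mathbb{M}$-definable function. (When the orbit is bounded, all $\rho_j=1$, $\Phi_r$ is independent of $t$, $g_r=w\circ q_r\circ\sigma$, and the argument collapses to that of \autoref{thm:mp_bounded}.)

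The step I expect to be the main obstacle is the last one: the convergence $w(\Phi_r(t,\mathbf x))\to g_r(\mathbf x)$ is only pointwise in $\mathbf x$ and in general not uniform, so the limit in $t$ cannot simply be interchanged with the time average; the remedy is to quarantine the ``slow'' points via the nested definable sets $V_K$, whose measures vanish because Lebesgue measure is countably additive on the definable — hence measurable — sets $V_n$. A secondary, routine point is verifying that $\Phi_r$ is real-valued and $\mathbb{M}$-definable, which proceeds exactly as in \autoref{thm:mp_bounded}.
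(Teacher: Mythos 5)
Your proposal is correct and follows essentially the same route as the paper: the residue-class split via \autoref{thm:subsequences-dense}, a definable representation of the weights as $g_r(n,\bm{\theta}_n)$, a pointwise limit function obtained from o-minimality (eventual monotonicity in $t$), an exchange-of-limits step using definable (hence Jordan/Lebesgue measurable) exceptional sets of vanishing measure together with Weyl's counting estimate, and finally Weyl equidistribution to produce the integral. The only differences are cosmetic — you use decreasing ``bad'' sets $V_K$ and continuity from above where the paper uses increasing ``good'' sets $X_{\varepsilon,n}$ and continuity from below, and you invoke the monotonicity theorem directly where the paper argues via an accumulation point and a definable distance function.
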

We prove \autoref{thm:mean-payoff-omin-main} in the remainder of this section.
For $n \in \nat$ and $1 \le i \le r$, let $u^{(i)}_n = e_i^\top M^n q$ and $u_n^{i,r} = u^{(i)}_{nR+r}$.
Recall that
\begin{align*}
	\MP_w(M,q) &= \lim_{n\to \infty} \frac{1}{n} \sum_{k=0}^{n-1} w(u^{(1)}_k,\ldots,u^{(d)}_k) \\
	&= \frac 1 R \sum_{r=0}^{R-1} \lim_{n\to \infty} \frac{1}{n} \sum_{k=0}^{n-1} w(u^{1,r}_k, \ldots, u^{d,r}_k).
\end{align*}
Applying \autoref{thm:subsequences-dense}, for all $i,r$ we can compute a function $f_{i,r}$ that is definable in $\rebt$ such that $u^{i,r}_n =f_{i,r}(n, \{n\theta_1\},\ldots,\{n\theta_m\})$ for all $n$. 
Because $\Mb$ extends $\rebt$ by assumption, we can therefore compute a function $g_{r} \st \rel \times \torus^m \to [-b,b]$ definable in $\mathbb{M}$ such that
\[
w(u^{1,r}_n, \ldots, u^{d,r}_n) = g_{r}(n, \bm{\theta}_n).
\]
We will next show that the sequence of functions $(g_{r,n})_{n\in\nat}$, defined by
\[
g_{r,n}(x_1,\ldots,x_m) = g_r(n,x_1,\ldots,x_m)
\] 
must converge pointwise for all $n$.
\begin{lemma}
	Let $h_n \st \torus^m \to [-b,b]$ for $n \in \nat$ be such that for all $\mathbf{x} = (x_1,\ldots,x_m) \in\torus^m$ and $y \in [-b,b]$,
	\[
	h_n(\mathbf{x}) = y \Leftrightarrow  \varphi(n, x_1,\ldots,x_m,y)
	\]
	where $\varphi$ is a fixed formula in the language of $\M$.
	There exists a function $h \st \torus^m \to [-b,b]$ definable in $\Mb$, whose representation can be computed effectively, such that for every $\mathbf{x} \in \torus^m$,
	\[
	\lim_{n\to \infty} h_{n}(\mathbf{x}) = h(\mathbf{x}).
	\]
\end{lemma}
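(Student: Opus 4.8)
The plan is to represent $h$ as the limit at $+\infty$ of a single definable one‑parameter family over the reals, and then to read off both its definability in $\Mb$ and the effectivity of its description from the basic structure theory of o‑minimal expansions of $\rel_0$. The point to keep in mind throughout is that $\nat$ is \emph{not} definable in any o‑minimal structure, so we may never quantify over the index $n$: every use of ``for all sufficiently large $n$'' must be routed through a quantification over a real variable $t$. This is the one genuine obstacle, and it forces the detour through a real‑variable function $\hat h$ below; everything else is routine once o‑minimality is in hand.

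First I would pass from the discrete family to a definable family over $\rel$. Since $\varphi$ is a formula in the language of $\Mb$, and $\torus^m=[0,1)^m$ and $[-b,b]$ are definable already in $\rel_0\subseteq\Mb$, the set
\[
Z(t,\mathbf{x}) \coloneqq \{b\}\cup\{\,y\in[-b,b]\st\varphi(t,x_1,\ldots,x_m,y)\,\}
\]
is, for each $(t,\mathbf{x})\in\rel\times\torus^m$, a nonempty definable subset of $[-b,b]$ bounded below by $-b$, uniformly definable in $(t,\mathbf{x})$. Define
\[
\hat{h}(t,\mathbf{x}) \coloneqq \inf Z(t,\mathbf{x}).
\]
The graph of $\hat h$ is cut out by the standard first‑order formula expressing ``$z$ is the greatest lower bound of the nonempty bounded set $Z(t,\mathbf{x})$'', so $\hat h\st\rel\times\torus^m\to[-b,b]$ is definable in $\Mb$. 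The hypothesis says that for $t=n\in\nat$ the fibre $\{y\in[-b,b]\st\varphi(n,\mathbf{x},y)\}$ equals $\{h_n(\mathbf{x})\}$, hence $Z(n,\mathbf{x})=\{b,h_n(\mathbf{x})\}$ and $\hat h(n,\mathbf{x})=\min(b,h_n(\mathbf{x}))=h_n(\mathbf{x})$ because $h_n(\mathbf{x})\in[-b,b]$. Thus $\hat h$ interpolates the given family along the integers, and adding the artificial element $b$ only serves to make $\hat h$ total at real $t$ where the fibre of $\varphi$ might be empty.

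Next I would extract the limit. Fix $\mathbf{x}\in\torus^m$; then $t\mapsto\hat h(t,\mathbf{x})$ is a definable function $\rel\to[-b,b]$, so by the Monotonicity Theorem for o‑minimal structures it is continuous and monotone on some ray $(N,\infty)$. A bounded monotone function has a finite limit at $+\infty$, so
\[
h(\mathbf{x}) \coloneqq \lim_{t\to\infty}\hat h(t,\mathbf{x})
\]
exists and lies in $[-b,b]$. Since the limit over the reals exists, it agrees with the limit along the integer subsequence, and $\hat h(n,\mathbf{x})=h_n(\mathbf{x})$; therefore $\lim_{n\to\infty}h_n(\mathbf{x})=h(\mathbf{x})$, which is exactly the required convergence. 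Note that existence of this pointwise limit is \emph{itself} a consequence of o‑minimality (an arbitrary bounded sequence of the form $\seq{h_n(\mathbf x)}$ need not converge) — this is where the hypothesis that $\Mb$ is o‑minimal is essential.

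It remains to observe that $h$ is definable in $\Mb$ and that its representation is obtained effectively. The equivalence
\[
h(\mathbf{x}) = y \quad\Leftrightarrow\quad \forall\varepsilon>0\;\exists T\;\forall t\,\bigl(t>T \;\rightarrow\; |\hat h(t,\mathbf{x}) - y| < \varepsilon\bigr)
\]
uses only quantifiers over real variables together with the already‑definable $\hat h$, so it is a first‑order formula in the language of $\Mb$; and since we proved the limit exists for every $\mathbf{x}$, this formula defines a total function $\torus^m\to[-b,b]$, namely $h$. Finally, each stage above — forming the formula for $Z$, then for $\hat h$, then for $h$ — is a purely syntactic transformation of the given formula $\varphi$, so the representation of $h$ can be written down algorithmically (we do not even need $\operatorname{Th}(\Mb)$ to be decidable for this). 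This completes the proof sketch; the only nontrivial input, used twice, is o‑minimality: once to make $\hat h$ definable by a single formula independent of $n$ (fibres are finite unions of points and intervals, so infima of definable families are definable), and once to guarantee the pointwise limit exists (one‑variable definable functions are ultimately monotone).
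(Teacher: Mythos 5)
Your proposal is correct, and it reaches the conclusion by a route that differs from the paper's in an instructive way. The paper fixes $\mathbf{x}$, invokes Bolzano--Weierstra\ss\ to obtain an accumulation point $z$ of $(h_n(\mathbf{x}))_{n\in\nat}$, and then applies ultimate monotonicity to the definable one-variable function $f(t)=|y-z|$ (with $y$ the $\varphi$-fibre element), concluding from $\liminf_{t\to\infty}f(t)=0$ that $h_n(\mathbf{x})\to z$; the function $h$ is then defined by an $\varepsilon$--$t$ formula phrased directly in terms of $\varphi$. You instead interpolate the whole discrete family by one total definable function $\hat h(t,\mathbf{x})=\inf\bigl(\{b\}\cup\{y\in[-b,b]\st\varphi(t,\mathbf{x},y)\}\bigr)$ and get existence of the limit at $+\infty$ directly from the monotonicity theorem applied to the bounded definable function $t\mapsto\hat h(t,\mathbf{x})$, with no appeal to Bolzano--Weierstra\ss. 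Both arguments rest on the same o-minimality input (ultimate monotonicity of one-variable definable functions, plus the remark that $\nat$ is not definable, forcing the detour through a real parameter $t$). What your variant buys is robustness at non-integer $t$: the padding-by-$b$ and the infimum make $\hat h$ total and single-valued even where the $\varphi$-fibre is empty or contains several points, and your defining formula for $h$ quantifies over $\hat h$ rather than over $\varphi$ directly, so it cannot fail vacuously at real $t'$ with empty fibre --- a point the paper's proof (where $f$ is implicitly assumed to be a well-defined total function of real $t$, and the final formula asserts $\exists y'\st\varphi(t',\dots)$ for all large real $t'$) treats more casually. The paper's version is marginally shorter and identifies the limit explicitly as an accumulation point, but your construction is self-contained and equally effective, since the formula for $h$ is obtained from $\varphi$ by a purely syntactic transformation.
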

\begin{proof}
	Fix $\mathbf{x} \in \torus^m$, and let $z$ be an accumulation point of  $(h_n(\mathbf{x}))_{n\in\nat}$, whose existence is guaranteed by the Bolzano-Weierstraß theorem.
	Define the function $f\st \rel \to \rel_{\ge 0}$ by
	\[
	f(t) = d \Leftrightarrow y \in [-b,b] \st \varphi(t, x_1,\ldots,x_m,y) \:\land\: |y-z| = d
	\]
	which, at $t \in \nat$, measures the distance from $h_t(\mathbf{x})$ to $z$.
	Since it is definable in the o-minimal structure $\Mb$, it is ultimately monotonic \cite[\S 4.1]{vdD-geometric-categories}.
	By construction of $z$, $\liminf_{t \to \infty} f(t) = 0$.
	It follows that $\lim_{t\to \infty} f(t) = 0$.
	The function $h$ is therefore defined by 
	\begin{align*}
		h(x_1,&\ldots,x_m) = y \Leftrightarrow 
		\\
		&\forall \varepsilon > 0.\: \exists t.\: \forall t' > t.\: \exists y' \st \varphi(t,x_1,\ldots,x_m,y') \:\land\: |y'-y|<\varepsilon. \qedhere
	\end{align*}
\end{proof}
Applying the lemma above, for $0 \le r < R$, let $g_r \colon \torus^m \to [-b, b]$ be the pointwise limit of $(g_{r,n})_{n \in \nat}$, which is definable in $\Mb$ and hence piecewise continuous (see \autoref{sec:omin-prelims}).
We next show that when computing the mean payoff, we can work with the limit $g_r$ rather than the exact sequence $(g_{r,n})_{n \in \nat}$.
This bring us closer to applying the equidistribution theorem, which requires a fixed function that is applied at all to the orbit of the dynamical system given by a translation on $\torus^m$.
\begin{lemma}
	Let $0 \le r < R$. 
	We have that 
	\[
	\lim_{n\to \infty} \frac{1}{n} \sum_{k=0}^{n-1} g_{r,k}(\bm{\theta}_k) = \lim_{n\to \infty}  \frac{1}{n} \sum_{k=0}^{n-1} g_{r}(\bm{\theta}_k).
	\]
\end{lemma}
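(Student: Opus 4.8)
The plan is to show that the averages of $g_{r,k}$ and $g_r$ along the equidistributed orbit $(\bm\theta_k)$ differ by a vanishing quantity, exploiting the fact that $g_{r,n} \to g_r$ pointwise together with the uniform bound $|g_{r,n}|, |g_r| \le b$ and the equidistribution of $(\bm\theta_k)$ in $\torus^m$. The key observation is that the difference between the two Cesàro sums is controlled by
\[
\frac{1}{n}\sum_{k=0}^{n-1} |g_{r,k}(\bm\theta_k) - g_r(\bm\theta_k)|,
\]
so it suffices to prove that this tends to $0$.

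The first step is to fix $\varepsilon > 0$ and, using the definition of the pointwise limit, write $\torus^m$ as an increasing union of the sets $E_N = \{\mathbf{x} \in \torus^m : |g_{r,k}(\mathbf{x}) - g_r(\mathbf{x})| < \varepsilon \text{ for all } k \ge N\}$. Since each $g_{r,k}$ and $g_r$ is definable in $\Mb$ (hence in the o-minimal structure, and piecewise continuous), each $E_N$ is a definable — in particular Jordan-measurable — subset of $\torus^m$, and $\Lcal(E_N) \to 1$ as $N \to \infty$ by the pointwise convergence. Choose $N$ with $\Lcal(\torus^m \setminus E_N) < \varepsilon$. The second step splits the sum $\frac{1}{n}\sum_{k=0}^{n-1}|g_{r,k}(\bm\theta_k) - g_r(\bm\theta_k)|$ into three parts: the finitely many terms $k < N$, which contribute $O(N/n) \to 0$; the terms $k \ge N$ with $\bm\theta_k \in E_N$, each of which is at most $\varepsilon$; and the terms $k \ge N$ with $\bm\theta_k \notin E_N$, each of which is at most $2b$, with the number of such $k$ among the first $n$ governed by $\frac{|\{0 \le k < n : \bm\theta_k \in \torus^m \setminus E_N\}|}{n}$. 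By \autoref{thm:weyl} applied to the Jordan-measurable set $\torus^m \setminus E_N$ (recall $1,\theta_1,\ldots,\theta_m$ are linearly independent over $\rat$ by the choice from \autoref{thm:subsequences-dense}), this frequency converges to $\Lcal(\torus^m\setminus E_N) < \varepsilon$. Putting the three estimates together yields $\limsup_{n\to\infty} \frac{1}{n}\sum_{k=0}^{n-1}|g_{r,k}(\bm\theta_k) - g_r(\bm\theta_k)| \le \varepsilon + 2b\varepsilon$, and since $\varepsilon$ is arbitrary the limit is $0$, which gives the claim.

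The main obstacle I anticipate is ensuring the sets $E_N$ are genuinely Jordan-measurable so that Weyl's theorem applies: this needs $E_N$ to be definable in the o-minimal structure, which in turn requires that the family $(g_{r,k})_{k\in\nat}$ be uniformly definable (the index $k$ ranging over $\nat$ appears as a real parameter $t$ in a single formula $\varphi$, exactly as in the preceding lemma), so that $E_N = \{\mathbf{x} : \forall t \ge N.\ \exists y, y'.\ \varphi(t,\mathbf{x},y) \land (g_r(\mathbf{x}) = y') \land |y - y'| < \varepsilon\}$ is a definable set. One must also confirm that the monotone-in-$N$ convergence $\Lcal(E_N) \to 1$ follows from pointwise convergence plus Jordan measurability — here a small care point is that Jordan (as opposed to Lebesgue) measure need not be continuous along increasing unions in general, but because $E_N$ and its complement are definable in an o-minimal structure, their measures are well-defined and the complements $\torus^m \setminus E_N$ form a decreasing definable family whose intersection has measure zero (being contained in the definable set where $g_{r,k} \not\to g_r$, which is empty), so their Jordan measures do tend to $0$. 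Apart from these measure-theoretic bookkeeping points, the argument is routine.
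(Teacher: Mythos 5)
Your proposal is correct and follows essentially the same route as the paper's proof: your sets $E_N$ are exactly the paper's $X_{\varepsilon,N}$, and both arguments split the Ces\`aro sum according to whether $\bm{\theta}_k$ lands in this definable (hence Jordan-measurable) set, using Weyl equidistribution to bound the visit frequency of the small complement and the uniform bound $2b$ there, with only cosmetic differences in how the $\varepsilon$'s are managed. Your side remark on continuity from below for Jordan measure is the same point the paper glosses with ``continuity from below,'' and is handled the same way (Jordan and Lebesgue measures coincide on these definable sets).
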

\begin{proof}
	For $\varepsilon > 0$ and $n \in \nat$, let $X_{\varepsilon, n}$ be the set of all $x \in \torus^m$ such that for all $m \ge n$, $|g_{r,m}(x)-g_r(x)| < \varepsilon$.
	We have that $X_{\varepsilon, n}$ is definable in $\Mb$ and thus (Jordan and Lebesgue) measurable (\autoref{sec:omin-prelims}).
	We denote the Jordan measure of a set $X \subseteq \torus^m$ by $\mathcal{C}(X)$.
	By construction of $g_r$ as the pointwise limit of $(g_{r,n})_{n\in\nat}$ and the definition of $X_{\varepsilon,n}$,
	\[
	\torus^m = \bigcup_{n \in \nat} X_{\varepsilon,n}
	\]
	for all $\varepsilon > 0$. 
	Moreover, $X_{\varepsilon,n} \subseteq X_{\varepsilon,m}$ for all $\varepsilon > 0$ and $n \le m$.
	By a standard property of measures (``continuity from below''),
	$\mathcal{C}(X_{\varepsilon, n})$ converges monotonically to 1 as $n \to \infty$ for all $\varepsilon > 0$.
	
	Let $\Delta > 0$.
	To prove our result it suffices to show that for all sufficiently large $n$, 
	\[
	\bigg|
	\frac{1}{n} \sum_{k=0}^{n-1} g_{r,k}(\bm{\theta}_k) - \frac{1}{n} \sum_{k=0}^{n-1} g_r(\bm{\theta}_k)
	\bigg| < \Delta
	\]
	which is equivalent to
	\[
	\bigg|
	\sum_{k=0}^{n-1} (g_{r,k}(\bm{\theta}_k) - g_r(\bm{\theta}_k)
	\bigg|
	< n\Delta.
	\]
	Recall that $[-b,b]$ is the image of $g_r$, and choose $\mu \in (0,1)$ and $\varepsilon, \delta > 0$ such that $(1-\mu+\delta) \cdot 2b < \frac \Delta 2$ and $(\mu + \delta) \cdot \varepsilon < \frac \Delta 4$.
	Let $N$ be such that $\mathcal{C}(X_{\varepsilon, N}) > \mu$, and write $X = X_{\varepsilon, N}$.
	%Let $M \ge N$ be such that for all $m \ge M$,
	%\[
	%\frac{|\{x_n \in X \st 0 \le n < m\}|}{m} \in (\mu - \delta, \mu + %\delta).
	%\]
	For all $n \in \nat$, 
	\begin{multline*}
		\sum_{k=0}^{n-1} g_{r,k}(\bm{\theta}_k) - g_r(\bm{\theta}_k) =\\ \sum_{k=0}^{n-1} \mathbbm{1}(\bm{\theta}_k \in X) (g_{r,k}(\bm{\theta}_k) - g_r(\bm{\theta}_k)) + \sum_{k=0}^{n-1} \mathbbm{1}(\bm{\theta}_k \notin X) (g_{r,k}(\bm{\theta}_k) - g_r(\bm{\theta}_k))
	\end{multline*}
%	$\sum_{k=0}^{n-1} g_{r,k}(x_k) - g_r(x_k)$ is equal to
%	\begin{align*}
%		\sum_{k=0}^{n-1} \mathbbm{1}(x_k \in X) (g_{r,k}(x_k) - g_r(x_k)) + \sum_{k=0}^{n-1} \mathbbm{1}(x_k \notin X) (g_{r,k}(x_k) - g_r(x_k))
%	\end{align*}
	where $\mathbbm{1}$ denotes the indicator function.
	By the Weyl equidistribution theorem, for all sufficiently large $n$ we have that
	\[
	\frac{1}{n} \sum_{k=0}^{n-1} \mathbbm{1}(\bm{\theta}_k\in X) \in (\mu-\delta,\mu+\delta)
	\]
	and 
	\[
	\frac{1}{n} \sum_{k=0}^{n-1} \mathbbm{1}(\bm{\theta}_k \notin X) \in (1-\mu-\delta,1-\mu+\delta).
	\]
	Hence for all sufficiently large $n$, 
	\[
	\bigg|
	\sum_{k=0}^{n-1} \mathbbm{1}(\bm{\theta}_k \notin X) (g_{r,k}(\bm{\theta}_k) - g_r(\bm{\theta}_k))
	\bigg|
	< (1-\mu+\delta)n \cdot 2b < \frac {n\Delta} {2}.
	\]
	Next, recall that for $k \ge N, |g_{r,k}(\bm{\theta}_k)-g_r(\bm{\theta}_k)| < \varepsilon$ by the construction of $X$.
	For $k < N$, we have that $|g_{r,k}(\bm{\theta}_k)-g_r(\bm{\theta}_k)| \leq 2b$.
	Hence for all sufficiently large $n$, 
	\[
	\bigg|
	\sum_{k=0}^{n-1} \mathbbm{1}(\bm{\theta}_k \in X) (g_{r,k}(\bm{\theta}_k) - g_r(\bm{\theta}_k))
	\bigg| <
	n(\mu+\delta) \cdot 2\varepsilon < \frac {n\Delta} {2}.
	\]
	It remains to apply the triangle inequality.
\end{proof}

Combining the lemma above with \autoref{thm:weyl} we deduce that
\[
\lim_{k\to \infty}  \frac{1}{n} \sum_{k=0}^{n-1} g_{r}(\bm{\theta}_k) = \int_{\mathbf{x} \in \torus^m} g_r(\mathbf{x}) d\mathbf{x}.
\]
Thus 
\begin{align*}
	\MP_w(M,q) &= \lim_{n\to \infty} \frac{1}{n} \sum_{k=0}^{n-1} w(u^{(1)}_k,\ldots,u^{(d)}_k) \\
	&= \frac 1 R \sum_{r=0}^{R-1} \lim_{n\to \infty} \frac{1}{n} \sum_{k=0}^{n-1} w(u^{1,r}_k, \ldots, u^{d,r}_k)\\
	&= \frac 1 R \sum_{r=0}^{R-1} \lim_{n\to \infty} \frac{1}{n} \sum_{k=0}^{n-1} g_r(\bm{\theta}_k)\\
	&= \int_{\mathbf{x} \in \torus^m} \frac{g_0(\mathbf{x})+\cdots+g_{R-1}(\mathbf{x})}{R} d\mathbf{x}.
\end{align*}
which concludes the proof of \autoref{thm:mean-payoff-omin-main}.

\paragraph*{Approximating mean payoff}
Recall that each $g_r \st \torus^m \to [-b,b]$ is definable in the o-minimal structure $\Mb$ and thus piecewise continuous (\autoref{sec:omin-prelims}).
Let us take $\Mb = \rebt$.
The first-order theory of this structure is decidable assuming Schanuel's conjecture.
Thus, by verifying truths of carious formulas (which can be done assuming Scahnuel's conjecture), we can compute an upper bound on the Lipschitz constant of each $g_r$ and approximate its value on $\mathbf{x} \in \torus^r$ with rational coordinates to arbitrary precision.
We can therefore, similarly to \autoref{sec:bounded_MP}, apply generic techniques to (conditionally) approximate mean payoff to arbitrary precision.

\subsection{Stochastic linear dynamical systems}
\label{sec:stochastic_MP}
Stochastic LDSs are a special case of LDSs with a bounded orbit. 
We will now show that for such systems, we can compute the mean payoff of the orbit under a continuous weight function by evaluating the weight function 
on finitely many points. 
In the aperiodic case, the orbit even converges to a single point, and consequently it suffices to evaluate the weight function once.
\begin{theorem}
	\label{thm:mean-payoff-aperiodic-markov}
Let $P\in \mathbb{Q}^{d\times d}$ be a stochastic, aperiodic matrix and $\iota\in \mathbb{Q}^d$ an initial distribution. 
Further let
$w\colon \mathbb{R}^d \to \mathbb{R}$ be a continuous weight function. Then,
$
\MP_w(P,\iota) = w(\pi)
$
where $\pi$ is the stationary distribution $\lim_{n\to \infty} P^n\iota$ of $P$, which is computable in polynomial time.
\end{theorem}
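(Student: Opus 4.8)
The plan is to derive the statement directly from three ingredients: convergence of the orbit of a stochastic aperiodic matrix, continuity of $w$, and the elementary fact that the Cesàro averages of a convergent sequence converge to the same limit. No machinery from the earlier parts of \autoref{sec:o-min-mean-payoff} or \autoref{sec:bounded_MP} (equidistribution, o-minimality, Kronecker's theorem) is needed here, precisely because the orbit of an aperiodic stochastic system has a single accumulation point rather than a continuum of them; this is what makes the aperiodic case so much easier than the general bounded-orbit case.

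Concretely, I would first invoke the fact recalled in \autoref{sub:prelim_MC}: because $P$ is stochastic and aperiodic, the sequence $(P^n\iota)_{n\in\nat}$ converges to a vector $\pi$ satisfying $\pi = P\pi$, and $\pi$ is computable in polynomial time (via \cite{Kulkarni1995,BK08}); this already yields the computability assertion and shows that the quantity $\lim_{n\to\infty}P^n\iota$ appearing in the statement is well defined. One small point worth a sentence is the possibly \emph{reducible} case: there $\pi$ need not be the unique stationary distribution, but it is still a genuine stationary distribution of $P$ and it is still the honest limit of the orbit, which is all that is used. Next, since $w$ is continuous, $P^n\iota \to \pi$ gives $w(P^n\iota)\to w(\pi)$. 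Finally, applying the fact that $a_n\to L$ implies $\tfrac1n\sum_{k=0}^{n-1}a_k\to L$ to the sequence $a_n = w(P^n\iota)$ yields
\[
\MP_w(P,\iota) = \lim_{n\to\infty}\frac1n\sum_{k=0}^{n-1} w(P^k\iota) = w(\pi),
\]
as claimed.

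I do not anticipate any real obstacle: the proof is a short chain of standard facts. The only step meriting a little care is ensuring that $\lim_{n\to\infty}P^n\iota$ exists in full generality (aperiodic but not necessarily irreducible $P$), which follows from the standard structure theory of finite-state Markov chains — the transient mass decays geometrically, and each bottom strongly connected component, being irreducible and aperiodic, mixes to its own stationary distribution — and is in any case exactly what is recorded in \autoref{sub:prelim_MC} and may simply be cited.
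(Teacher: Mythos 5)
Your proof is correct and takes essentially the same approach as the paper: cite the convergence of $(P^n\iota)_{n\in\nat}$ to a stationary distribution $\pi$ (computable in polynomial time), apply continuity of $w$, and conclude via the fact that Cesàro averages of a convergent sequence converge to the same limit. The extra remark you make about the reducible aperiodic case is a fair point of care that the paper leaves implicit by referring to \autoref{sub:prelim_MC}.
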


\begin{proof}
As described in \autoref{sub:prelim_MC}, we know that the orbit $(P^n\iota)_{n\in \nat}$ converges to a stationary distribution $\pi$ in this case, which can be computed in polynomial time \cite{Kulkarni1995,BK08}. 
As $w$ is continuous,
$\lim_{n\to \infty} w(P^n \iota) = w(\pi)$.
It follows that 
 \[
\MP_w(P,\iota) =  \lim_{n\to \infty} \frac{1}{n} \sum_{k=0}^{n-1} w(P^k \iota) =  w\big(\lim_{n\to \infty} P^n \iota\big) = w(\pi).\qedhere
 \]
 \end{proof}
Hence in the aperiodic case the computation of the mean payoff reduces to evaluating the function $w$ once on a rational point computable in polynomial time.
We next address the periodic case by splitting up the orbit into subsequences.

\begin{comment}

	\todo{This has to be reformulated. What is $I$? What is $n$? What is $C_{nd+r} = C_r$ supposed to mean?}
    \begin{fact}{(see \cite{NorrisMC})}
    	Let $P$ with finite set of states $S$ be irreducible. There is an integer $L \geq 1$ and a partition
    	$S = C_0 \cup C_1 \cup \ldots \cup C_{L-1}$
    	such that (setting $C_{nL+r} := C_r, $ where $ n\in \mathbb{N})$
    	\begin{enumerate}
    		\item $p_{ij}^{(n)} > 0$ only if $i \in C_r$ and $j \in C_{r+n}$ for some $r$;
    		\item $p_{ij}^{(nL)} > 0$ for all sufficiently large $n$, for all $i,j \in C_r$, for all $r$.
    	\end{enumerate}
    \end{fact}

	It becomes apparent that $P^{L}$ exhibits aperiodicity and irreducibility within each $C_r$ for all $r\in{0, 1, 2, ..., L-1}$. Hence, by utilizing the findings established for the aperiodic scenario, we can deduce that $P^{n}$ converges. Now, we can consider $L$ subsequences of $P^n$: $$(P^L)^n, P(P^L)^n, P^2(P^L)^n,...,P^{L-1}(P^L)^n .$$ The subsequence $(P^{L})^n$ converges, so all other $L-1$ subsequences also must converge.

\end{comment}

	For an irreducible and periodic Markov chain with period $L$, we have that $P^L$ is aperiodic and  $L\leq d$ by \cite[Theorem 1.8.4]{NorrisMC}.
	Together with \autoref{thm:mean-payoff-aperiodic-markov} this allows us to compute $\MP_w(P^L,P^r\iota)$, which satisfies
	\[
	\MP_w(P,\iota) = \frac{1}{L} \sum_{r=0}^{L-1} \MP_w(P^L,P^r\iota).
	\]
	That is, for an irreducible stochastic LDS we can divide $(P^{n}\iota)_{n\in\nat}$ into $L$ equally spaced subsequences and compute the mean payoff $\MP_w(P,\iota) $
	as the arithmetic mean of the mean payoffs of these subsequences.

\begin{theorem}
\label{thm:mp_stochastic_irreducible}
Let $P\in \mathbb{Q}^{d\times d}$ be a stochastic, irreducible matrix and $\iota\in\mathbb{Q}^d$ an initial distribution.
Let $w\colon \mathbb{R}^d \to \mathbb{R}$ be a continuous weight function.
 Then, we can compute points $\pi_0,\dots, \pi_{L-1}\in \mathbb{Q}^d$ in polynomial time for some $L\leq d$ such that 
$
\MP_w(P,\iota) = \frac{1}{L} \sum_{i=0}^{L-1} w(\pi_i)$.
\end{theorem}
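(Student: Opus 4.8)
The plan is to reduce the irreducible periodic case to the aperiodic case already handled in Theorem~\ref{thm:mean-payoff-aperiodic-markov}, exactly along the lines sketched in the discussion preceding the statement. First I would let $L$ be the period of the Markov chain underlying $P$; by \cite[Theorem~1.8.4]{NorrisMC} (the statement already invoked above) we have $L \leq d$, and $P^L$ is stochastic, irreducible, and aperiodic. Since the period and hence $L$ can be read off from the transition graph in polynomial time, and matrix powering is polynomial, we can compute $P^L$ and the vectors $P^r \iota$ for $0 \le r < L$ in polynomial time.

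Next I would split the orbit $(P^n \iota)_{n \in \nat}$ into the $L$ equally spaced subsequences indexed by residues $r \in \{0,\ldots,L-1\}$, namely $(P^{nL+r}\iota)_{n\in\nat} = ((P^L)^n (P^r\iota))_{n\in\nat}$. Each of these is the orbit of the stochastic LDS $(P^L, P^r\iota)$, and since $P^L$ is stochastic and aperiodic, Theorem~\ref{thm:mean-payoff-aperiodic-markov} applies: the orbit converges to the stationary distribution $\pi_r := \lim_{n\to\infty} (P^L)^n (P^r \iota)$, which is a rational vector computable in polynomial time, and $\MP_w(P^L, P^r\iota) = w(\pi_r)$. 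Setting $\pi_i := \pi_i$ for $0 \le i < L$ gives the desired points, all computed in polynomial time.

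Finally I would assemble the mean payoff. Using the standard fact that a Cesàro average of a sequence is unchanged by interleaving $L$ subsequences whose Cesàro averages each exist — concretely, partition $\{0,\ldots,n-1\}$ by residue mod $L$, apply the definition of $\MP_w$ to each residue class, and note the block sizes are each $n/L + O(1)$ — one obtains
\[
\MP_w(P,\iota) = \lim_{n\to\infty} \frac{1}{n}\sum_{k=0}^{n-1} w(P^k\iota) = \frac{1}{L}\sum_{r=0}^{L-1} \MP_w(P^L, P^r\iota) = \frac{1}{L}\sum_{r=0}^{L-1} w(\pi_r).
\]
In particular the limit defining $\MP_w(P,\iota)$ exists because each of the $L$ subsequence limits exists.

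The only genuinely delicate point is the interleaving step: one must check that existence of the $L$ sub-limits $\lim_n \frac{1}{n}\sum_{k<n} w(P^{kL+r}\iota) = w(\pi_r)$ implies existence of the full Cesàro limit and the claimed averaging identity. This is routine — write $n = qL + s$ with $0 \le s < L$, group the partial sum into $L$ sub-sums of lengths differing by at most one, divide by $n$, and let $n\to\infty$; the boundedness of $w$ on the (bounded, indeed compact) set of orbit points controls the $O(1)$ discrepancy terms — so I expect no real obstacle, merely bookkeeping. The reliance on $L \le d$ is what keeps everything polynomial-time, and on the computability side everything rests on Theorem~\ref{thm:mean-payoff-aperiodic-markov} together with polynomial-time matrix arithmetic and polynomial-time computation of the period.
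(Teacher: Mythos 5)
Your proof follows essentially the same route as the paper: split the orbit by residue mod the period $L$, apply \autoref{thm:mean-payoff-aperiodic-markov} to each aperiodic sub-LDS $(P^L, P^r\iota)$, and average; the interleaving/Ces\`aro bookkeeping you spell out is exactly the implicit step in the paper. One small inaccuracy: when $L>1$, $P^L$ is aperiodic but \emph{not} irreducible (the cyclic classes become separate closed classes), though this is harmless since \autoref{thm:mean-payoff-aperiodic-markov} only requires aperiodicity.
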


Write $\Vert x \Vert$ for the bit length of $x$.
Since the points $\pi_0,\dots,\pi_{L-1}$ can be computed in polynomial time, they have bit length at most polynomial in the length of the original input.
Therefore, we have the following.
\begin{corollary}
\label{cor:approx_Markov}
Assume that the value $w(a)$ can be approximated in time $f_{w}(\Vert a\Vert , \epsilon) $ up to some precision $\epsilon\geq 0$ (where $\epsilon=0$ corresponds to exact computation) for all rational inputs $a\in \mathbb{Q}^d$.
There is a fixed polynomial $p$ such that the mean payoff $\MP_w(P,\iota)$ can be approximated up to precision~$\epsilon$ in  time  at most $d\cdot  f_{w}(p(\Vert  (P,\iota) \Vert , \epsilon) + p(\Vert  (P,\iota)\Vert )$.
 \end{corollary}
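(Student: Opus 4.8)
The plan is to read everything off \autoref{thm:mp_stochastic_irreducible}, which supplies an algorithm, running in time polynomial in $\Vert(P,\iota)\Vert$, that outputs rational points $\pi_0,\dots,\pi_{L-1}\in\rat^d$ with $L\le d$ and $\MP_w(P,\iota)=\frac{1}{L}\sum_{i=0}^{L-1}w(\pi_i)$. First I would fix a polynomial $p_0$ bounding the running time of that algorithm. Then producing the $\pi_i$ costs at most $p_0(\Vert(P,\iota)\Vert)$, and, since a computation that runs for at most $p_0(\Vert(P,\iota)\Vert)$ steps can write down at most that many bits, every $\pi_i$ has bit length $\Vert\pi_i\Vert\le p_0(\Vert(P,\iota)\Vert)$. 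Treating $f_w$ without loss of generality as nondecreasing in its first argument (it is a time bound, so one may pad the input), approximating $w(\pi_i)$ to precision $\epsilon$ then costs at most $f_w(p_0(\Vert(P,\iota)\Vert),\epsilon)$.

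Next I would observe that averaging does not amplify error: if $\widetilde w_i\in\rat$ satisfies $|\widetilde w_i-w(\pi_i)|\le\epsilon$ for every $i$, then $\bigl|\frac{1}{L}\sum_i\widetilde w_i-\MP_w(P,\iota)\bigr|\le\frac{1}{L}\sum_i|\widetilde w_i-w(\pi_i)|\le\epsilon$. The algorithm is therefore: compute the $\pi_i$; for each of the $L\le d$ points compute such a $\widetilde w_i$, at total cost at most $d\cdot f_w(p_0(\Vert(P,\iota)\Vert),\epsilon)$; and output the exact rational $\frac{1}{L}\sum_i\widetilde w_i$, which by the displayed inequality is within $\epsilon$ of $\MP_w(P,\iota)$. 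The case $\epsilon=0$ is identical, using exact evaluations of $w$ (which then presupposes a finite representation of each $w(\pi_i)$) and exact arithmetic throughout.

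It then remains only to account for the final arithmetic, a sum of at most $d$ rationals and a division by $L\le d$, performed on numbers whose bit lengths are bounded by the time already spent; this adds time polynomial in $d$ and in the bits produced so far, and after enlarging $p_0$ to a suitable polynomial $p$ it is subsumed, together with the cost of producing the $\pi_i$, into the bound $d\cdot f_w(p(\Vert(P,\iota)\Vert),\epsilon)+p(\Vert(P,\iota)\Vert)$ (one may make the bookkeeping of this last step completely routine by outputting a $\tfrac\epsilon2$-precise rational approximation of the $\tfrac\epsilon2$-precise average instead of the exact average). I do not expect a genuine obstacle here; the only points needing care are (i) that a polynomial-time subroutine produces polynomially bounded output, so the $\pi_i$ enter $f_w$ with a polynomially bounded first argument, and (ii) the error-propagation check that an $\epsilon$-approximation of each $w(\pi_i)$ followed by a rational average gives an $\epsilon$-approximation of the mean payoff. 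If one would rather not assume $f_w$ monotone in its first coordinate, one can restate the hypothesis with $f_w(s,\epsilon)$ replaced by $\max_{s'\le s}f_w(s',\epsilon)$, or simply fold the actual returned bit lengths into the definition of $p$.
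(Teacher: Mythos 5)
Your proposal is correct and follows exactly the same route the paper takes: the paper's entire justification is the preceding paragraph, which invokes \autoref{thm:mp_stochastic_irreducible} to produce $\pi_0,\dots,\pi_{L-1}$ in polynomial time, observes that polynomial-time computation yields polynomially bounded bit lengths, and then relies on the tacit facts that averaging does not amplify error and that the final arithmetic is cheap. You have simply made those tacit steps explicit (the error-propagation bound, the monotonicity convention for $f_w$, and the bookkeeping for the final rational arithmetic), so the argument is the same one, just written out in full.
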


	When a Markov chain is reducible, 
	the states can be renamed in a way such that, the matrix representation of the Markov chain will contain distinct blocks corresponding to the  bottom strongly connected components (BSCCs) on the diagonal along with additional columns at the right representing states that do not belong to any BSCC:
    \[
    \begin{bmatrix}
    	\begin{matrix}
    		\resizebox{2em}{!}{\Huge $\square$}
    	\end{matrix}
    	& 0 ... 0 & 0 ... 0  & \ast &\ast \\
    	0...0
    	& \begin{matrix}
    		\resizebox{2.5em}{!}{\Huge $\square$}
    	\end{matrix}
    	& 0 ... 0  & \ast &\ast  \\
    	0 ... 0  & 0 ... 0  & \begin{matrix}
    		\resizebox{2em}{!}{\Huge $\square$}
    	\end{matrix} &  \ast &\ast  \\
0 ... 0 & 0...0&0....0& \ast &\ast 
    \end{bmatrix}
    \]
     Each block representing a BSCC constitutes an irreducible Markov chain. Assume we have $k$ blocks with periods $L_1, L_2,..., L_k$ correspondingly. Let $l$ be the least common multiple of the periods. Now we will have $l$ subsequences of the orbit each of which will converge. The convergence of the rows in the bottom is a result of the fact that Markov chain will enter a BSCC with  probability 1. 
     So, in general, we have $l$ subsequences of the orbit, all of which converge. 
We observe that  $l \leq d^d$, from which the following result follows:

\begin{theorem}
\label{thm:mp_stochastic_reducible}
Let $P\in \mathbb{Q}^{d\times d}$ be a stochastic matrix and $\iota\in\mathbb{Q}^d$ an initial distribution.
Let $w\colon \mathbb{R}^n \to \mathbb{R}$ be a continuous weight function.
Then, we can compute points $\pi_0,\dots, \pi_{l-1}\in \mathbb{Q}^d$ in exponential time for some $l\leq d^d$ such that 
$
\MP_w(P,\iota) = \frac{1}{l} \sum_{i=0}^{l-1} w(\pi_i)$.
\end{theorem}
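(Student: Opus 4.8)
The plan is to generalise the subsequence-splitting idea already used for the irreducible case (\autoref{thm:mp_stochastic_irreducible}) to an arbitrary stochastic $P$ by exploiting the canonical block decomposition of a finite Markov chain. First I would reorder the states so that $P$ takes the block form displayed above: the bottom strongly connected components $B_1,\dots,B_k$, which are irreducible sub-chains with periods $L_1,\dots,L_k$, become diagonal blocks, and the remaining transient states $T$ (those lying in no BSCC) contribute the extra columns on the right. Computing this decomposition together with the periods $L_i$ takes polynomial time (SCC computation, then a gcd of return times inside each $B_i$). I then set $l \coloneqq \operatorname{lcm}(L_1,\dots,L_k)$. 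Since the period of an irreducible chain on $m$ states is at most $m$, we have $L_i \le |B_i| \le d$, and as there are $k \le d$ BSCCs, $l \le \prod_{i=1}^k L_i \le d^{d}$.

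The core of the argument is to show that each of the $l$ interleaved subsequences $(P^{nl+r}\iota)_{n\in\nat}$, $0 \le r < l$, converges to a rational limit $\pi_r$. It suffices to show that $Q \coloneqq \lim_{n\to\infty}(P^l)^n$ exists, for then $\pi_r = Q\,P^r\iota = \lim_{n\to\infty}(P^l)^nP^r\iota$ since $P^{nl+r} = (P^l)^nP^r$. On the transient coordinates this is immediate: a finite Markov chain is absorbed into $\bigcup_i B_i$ with probability $1$, so the mass placed by $P^n\iota$ on $T$ tends to $0$, hence so does each coordinate in $T$. Within a BSCC $B_i$, because $L_i \mid l$ the restriction of $P^l$ to $B_i$ is a power of $P_i^{L_i}$, which is block-diagonal over the $L_i$ cyclic subclasses of $B_i$, each block being an aperiodic irreducible stochastic matrix (cf.\ the aperiodic case, \autoref{thm:mean-payoff-aperiodic-markov}); hence $(P^l)^n$ restricted to $B_i$ converges. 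By the standard convergence theorem for finite Markov chains all of whose recurrent classes are aperiodic \cite{NorrisMC}, $Q$ exists. Moreover $Q$, and hence each $\pi_r = Q\,P^r\iota$, is rational and computable, since $Q$ is assembled from solutions of rational linear systems: stationary equations $\pi = P^l\pi$ on the recurrent classes and absorption-probability equations linking $T$ to them.

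Finally I would combine these ingredients. Continuity of $w$ gives $\lim_{n\to\infty} w(P^{nl+r}\iota) = w(\pi_r)$ for each $r$. Splitting the Cesàro average by residue class modulo $l$,
\[
\frac1N \sum_{k=0}^{N-1} w(P^k\iota) \;=\; \sum_{r=0}^{l-1} \frac1N \sum_{0 \le n,\; nl+r < N} w(P^{nl+r}\iota),
\]
and, as the inner sum over $n$ has $\sim N/l$ terms each tending to $w(\pi_r)$, the Cesàro theorem applied to each $r$ yields $\MP_w(P,\iota) = \frac1l\sum_{r=0}^{l-1} w(\pi_r)$. For the complexity claim: $l$ can be exponentially large but has only polynomially many bits, so $P^l$ can be formed by repeated squaring in polynomially many matrix multiplications; the entry bit lengths may nonetheless blow up exponentially, and there are up to $l \le d^{d}$ powers $P^r$ to evaluate, which is why the overall running time is exponential. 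I expect the main obstacle to be the careful treatment of the transient block together with the cyclic-subclass structure inside each BSCC: one must verify that the single common multiple $l$ simultaneously ``de-periodises'' every BSCC so that each interleaved subsequence genuinely converges (and is not merely bounded), and then confirm the $l \le d^{d}$ bound and the rationality and effective computability of the points $\pi_r$.
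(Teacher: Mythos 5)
Your proposal follows essentially the same route as the paper: decompose $P$ into BSCC blocks plus transient states, take $l$ to be the lcm of the BSCC periods (bounded by $d^d$), show each of the $l$ interleaved subsequences converges to a rational, computable limit, and average the weight function over these limits. You supply more detail than the paper (convergence of $(P^l)^n$, rationality via stationary and absorption equations, the Ces\`aro split), but the argument and its key steps coincide with the paper's.
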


As the transition matrix $P^l$ of the $l$ subsequences as well as the initial values $P^r \iota$ with $0\leq r< l$ can be computed in polynomial time by repeated squaring, each of the points $\pi_i$ with $0\leq i < l$ can be computed in polynomial time. 
Assuming that the value $w(a)$ can be approximated in time $f_{w}(\Vert a\Vert , \epsilon) $  for all rational inputs $a\in \mathbb{Q}^d$, we can hence conclude that there is again a fixed polynomial $q$ such that the mean payoff of reducible stochastic LDSs can be approximated 
 to precision $\epsilon$  in time bounded by 
$d^d \cdot  f_{w}(q(\Vert  (P,\iota) \Vert , \epsilon) + d^d \cdot q(\Vert  (P,\iota)\Vert )$ analogously to \autoref{cor:approx_Markov}.

	\section{Total (discounted) reward and satisfaction of energy constraints}
\label{sec:total}

In this section, we address the computation of total accumulated rewards and total discounted rewards as well as the problem to decide whether the 
accumulated reward ever drops below a given bound -- a problem known as the satisfaction of energy constraints -- for LDSs with polynomial weight functions.
First,
we prove that the total as well as the discounted accumulated weight of the orbit is  computable and rational if finite.
Afterwards, we discuss Baker's theorem and its consequences before applying these to solve the energy constraint problem for low-dimensional LDSs.
We furthermore provide two different hardness results showing that,  in dimension 4,
 the problem is hard with respect to certain open decision problems in Diophantine approximation and that restricting to stochastic LDSs and linear weight functions also does not lead to decidability in general.

\subsection{Total reward}
\label{sec:total_reward}
Let $M\in \mathbb{Q}^{d\times d}$ be a matrix, $q\in \mathbb{Q}^d$ be an initial vector, and $w\colon \mathbb{R}^d\to \mathbb{R}$ be a polynomial weight function with rational coefficients.
We define the \emph{total reward} as
\[
\mathrm{tr}(M,q,w) := \sum_{k=0}^\infty w(M^k q).
\] 
Likewise, for a discount factor $\delta\in (0,1) \cap \rat$ we define the \emph{total discounted reward} as 
\[
\mathrm{dr}(M,q,w,\delta) := \sum_{k=0}^\infty \delta^k \cdot w(M^k q).
\]
Both of these quantities, when they exist, can be determined effectively.
\begin{lemma}
	\label{thm:MP_LRS-corollary}
	For a rational LRS $\seq{u_n}$, it is decidable whether $\lim_{n \to \infty} u_n$ exists, in which case the limit is rational and effectively computable.
\end{lemma}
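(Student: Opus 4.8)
The plan is to reduce the existence of $\lim_{n\to\infty} u_n$ to the existence of $\lim_{n\to\infty} u_n/n$, which is already handled by \autoref{thm:MP_LRS}, and then to do a small amount of extra bookkeeping to pin down the actual value of the limit (as opposed to just deciding whether it is finite and computing the rate of linear growth). First I would invoke the exponential polynomial representation $u_n = \sum_{i=1}^m p_i(n)\lambda_i^n$ with the $\lambda_i$ pairwise distinct, $|\lambda_1| \ge \cdots \ge |\lambda_m| > 0$, and each $p_i \not\equiv 0$. As in the proof of \autoref{thm:MP_LRS}: if $|\lambda_1| > 1$ then by \autoref{thm:lrs-growth-s-units} the sequence is unbounded, so the limit does not exist; if $|\lambda_1| < 1$ then $u_n \to 0$. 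So the only interesting case is $|\lambda_1| = 1$, and after discarding the terms with $|\lambda_i| < 1$ (which contribute a sequence tending to $0$) we may assume all $|\lambda_i| = 1$.

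Next I would argue that convergence forces every $p_i$ to be constant and in fact forces $m = 1$ with $\lambda_1 = 1$. Writing $u_n = n^l \sum_{j} c_j \mu_j^n + (\text{lower degree in } n)$ with $l = \max_i \deg p_i \ge 1$, \autoref{thm:kronecker-cor3} gives a constant $c > 0$ with $|\sum_j c_j \mu_j^n| > c$ infinitely often, so $|u_n|$ is unbounded — contradiction. Hence $l = 0$, i.e.\ $u_n = \sum_{i=1}^m c_i \lambda_i^n$ with all $|\lambda_i| = 1$ and each $c_i \ne 0$. If some $\lambda_i \ne 1$, then again by \autoref{thm:kronecker-cor3} applied to $u_n - c_{i_0} = \sum_{\lambda_i \ne 1} c_i \lambda_i^n$ (if $\lambda_1 = 1$, say $i_0 = 1$; note this residual LRS is not identically zero by uniqueness of the exponential polynomial form and the fact that some $\lambda_i \ne 1$), the sequence $u_n$ does not converge. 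So the limit exists if and only if, after this reduction, the only surviving characteristic root is $\lambda = 1$, in which case $u_n = c_1$ eventually and $\lim_{n\to\infty} u_n = c_1$.

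All of these conditions are effectively checkable: one computes the exponential polynomial representation of $\seq{u_n}$ (polynomial in $\mathcal{I}^d$ by \cite[Chapter~2.2]{karimov-thesis}), reads off $|\lambda_1|$ and the degrees of the $p_i$, and checks whether the ``unit-circle, constant-polynomial'' part consists solely of the term $c_1 \cdot 1^n$. Rationality of $c_1$ when the limit exists follows exactly as in \autoref{thm:MP_LRS}: since $\seq{u_n}$ is rational-valued, it is fixed by every automorphism $\sigma$ of $\com$; each such $\sigma$ fixes $\lambda = 1$, so by uniqueness of the exponential polynomial representation $\sigma(c_1) = c_1$, whence $c_1 \in \rat$. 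Alternatively — and this is the cleanest way to present it — one can just observe that $\lim_{n\to\infty} u_n$ exists iff $u_n$ is eventually constant, which happens iff $\lim_{n\to\infty}(u_{n+1} - u_n) = 0$; since $\seq{u_{n+1} - u_n}$ is itself a rational LRS, \autoref{thm:MP_LRS} (or its bounded-limit analysis) decides whether this difference tends to $0$, and then the common eventual value is computed directly from the exponential polynomial form.

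I do not anticipate a genuine obstacle here: the result is essentially a corollary of \autoref{thm:MP_LRS} together with \autoref{thm:kronecker-cor3}. The only point requiring a little care is the reduction to eventual constancy and the justification that the residual sum $\sum_{\lambda_i \ne 1} c_i \lambda_i^n$ is not identically zero so that \autoref{thm:kronecker-cor3} genuinely applies — this is where uniqueness of the exponential polynomial representation (\autoref{thm:exp-poly-of-id-zero}) is used. The rationality and computability claims then come for free from the explicit handle on the exponential polynomial representation.
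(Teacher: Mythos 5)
Your overall strategy---rerunning the exponential--polynomial case analysis of \autoref{thm:MP_LRS} directly on $\seq{u_n}$---can be made to work, but it is considerably heavier than the paper's argument and, as written, has a gap at the decisive step. The paper's proof is a two-line reduction: set $v_n = nu_n$, note that $\seq{v_n}$ is again a rational LRS (LRSs over $\rat$ are closed under products and $(n)_{n\in\nat}$ is an LRS), observe that $\lim_{n\to\infty} u_n = \lim_{n\to\infty} v_n/n$, and apply \autoref{thm:MP_LRS}; no fresh analysis of characteristic roots is needed, and decidability, rationality and computability are inherited wholesale.

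The gap in your version: after reducing to $v_n = \sum_i c_i\lambda_i^n$ with all $|\lambda_i|=1$, you claim that if some $\lambda_i \ne 1$ then \autoref{thm:kronecker-cor3} applied to $\sum_{\lambda_i\ne 1} c_i\lambda_i^n$ shows that $u_n$ does not converge. That corollary only yields a constant $c>0$ with $|u_n - c_{i_0}| > c$ for infinitely many $n$, which rules out convergence \emph{to $c_{i_0}$} but not convergence to some other limit $L \ne c_{i_0}$. You need one more idea here: either invoke \cite[Lemma 4]{braverman} exactly as the proof of \autoref{thm:MP_LRS} does (the part with a root $\ne 1$ drops below $a$ and exceeds $b$ infinitely often for some $a<b$, precluding any limit), or apply \autoref{thm:kronecker-cor3} to the difference sequence $\sum_{\lambda_i\ne 1} c_i(\lambda_i-1)\lambda_i^n$, whose coefficients are non-zero, to conclude $u_{n+1}-u_n \not\to 0$. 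Relatedly, your ``cleanest alternative'' is incorrect as stated: a rational LRS can converge without being eventually constant (e.g.\ $u_n = 2^{-n}$), so ``$\lim u_n$ exists iff $u_n$ is eventually constant'' is false, and \autoref{thm:MP_LRS} decides existence of $\lim u_n/n$, not whether an LRS tends to $0$, so it cannot be invoked as a black box for the difference sequence; likewise ``$u_n = c_1$ eventually'' should read ``$u_n \to c_1$''. The rationality and computability part of your main argument (the Galois argument fixing the coefficient of $1^n$, read off from the exponential polynomial representation) is fine and mirrors the paper's proof of \autoref{thm:MP_LRS}.
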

\begin{proof}
	The sequence $v_n = nu_n$ is also a rational LRS.
	It remains to observe that $\lim_{n \to \infty} u_n = \lim_{n \to \infty} v_n/n$ and apply \autoref{thm:MP_LRS}.
\end{proof}
\begin{theorem}
	\label{thm:total_discounted}
	It is decidable whether  $\sum_{k=0}^\infty w(M^kq)$ and $ \sum_{k=0}^\infty \delta^k \cdot w(M^k q)$ converge, in which case their values are rational and effectively computable.
\end{theorem}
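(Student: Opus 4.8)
The key observation is that both quantities are limits of partial sums, and the sequences of partial sums are rational LRSs to which \autoref{thm:MP_LRS-corollary} applies. For the total reward, set $v_n = \sum_{k=0}^{n} w(M^k q)$. By \autoref{thm:LRS_poly_weight} (applied with the polynomial $w$ in place of $p$), $\seq{v_n}$ is a rational LRS. The total reward $\mathrm{tr}(M,q,w) = \sum_{k=0}^\infty w(M^kq)$ exists as an element of $\rel$ precisely when $\lim_{n\to\infty} v_n$ exists, and by \autoref{thm:MP_LRS-corollary} it is decidable whether this limit exists, with the limit being rational and effectively computable when it does. This immediately settles the first half of the theorem.

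For the discounted reward, I would first argue that $w_n^\delta \coloneqq \delta^k w(M^k q)$ is itself a rational LRS: indeed $\seq{\delta^n}$ is a (trivial) rational LRS and $\seq{w(M^nq)}$ is a rational LRS by the discussion in \autoref{sec:lrs}, and rational LRSs are closed under multiplication. Hence, by the same argument as in \autoref{thm:LRS_poly_weight}, the sequence of partial sums $v_n^\delta \coloneqq \sum_{k=0}^n \delta^k w(M^kq)$ is again a rational LRS. Then $\mathrm{dr}(M,q,w,\delta)$ exists iff $\lim_{n\to\infty} v_n^\delta$ exists, and \autoref{thm:MP_LRS-corollary} again gives decidability, rationality, and effective computability. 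Note that because $\delta \in (0,1)\cap\rat$, the characteristic roots of $\seq{v_n^\delta}$ relevant to convergence are those of $\seq{w(M^nq)}$ scaled by $\delta$, so in fact the discounted sum always converges when $w(M^nq)$ has no eigenvalue of modulus $\ge 1/\delta$; but the general decision procedure via \autoref{thm:MP_LRS-corollary} handles every case uniformly and there is no need to carry out this refinement.

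**Main obstacle.** There is essentially no obstacle here: the theorem is a direct corollary of the machinery already assembled (\autoref{thm:LRS_poly_weight}, \autoref{thm:MP_LRS}, and its consequence \autoref{thm:MP_LRS-corollary}), together with the closure of rational LRSs under multiplication recalled in \autoref{sec:lrs}. The only point requiring a sentence of care is verifying that the partial-sum sequence in the discounted case is genuinely a rational LRS — this follows by replacing $p(M^nq)$ with $\delta^n p(M^nq)$ in the proof of \autoref{thm:LRS_poly_weight}, or equivalently by observing $\seq{\delta^n w(M^nq)}$ is a product of two rational LRSs and then invoking that lemma verbatim. Everything else is a routine appeal to the cited results.
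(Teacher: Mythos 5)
Your proposal is correct and matches the paper's own argument: both reduce the total and discounted sums to limits of partial-sum sequences, show these are rational LRSs (using closure under multiplication by the LRS $\seq{\delta^n}$ for the discounted case), and invoke \autoref{thm:MP_LRS-corollary}. The only difference is that you spell out explicitly that the discounted partial sums form an LRS via the argument of \autoref{thm:LRS_poly_weight}, a detail the paper leaves implicit.
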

\begin{proof}
	Let $u_n = \sum_{k=0}^n w(M^kq)$.
	As argued in \autoref{sec:polynomial_MP}, $\seq{u_n}$ is a rational LRS, and we can apply \autoref{thm:MP_LRS-corollary}.
	Similarly, let $v_n = \sum_{k=0}^\infty \delta^k \cdot w(M^k q)$.
	As $\seq{\delta^n}$ is itself a (rational) LRS and such LRS are closed under pointwise multiplication, $v_n$ is also a rational LRS.
	We again apply \autoref{thm:MP_LRS-corollary}.
\end{proof}

\subsection{Baker's theorem and its applications}
\label{sec:baker}
We now discuss Baker's theorem, which is the most important tool for quantitatively analysing growth of low-order linear recurrence sequences.
We will use it in the analysis of energy constraints in the next section.
A \emph{linear form in logarithms} is an expression of the form
\[
\Lambda = b_1 \Log \alpha_1 + \cdots + b_m \Log \alpha_m
\] where $b_i \in \intg$ and $\alpha_i \in \alg$ for all $1 \le i \le m$.
Here $\Log$ denotes the principal branch of the complex logarithm.
The celebrated theorem of Baker places a lower bound on $|\Lambda|$ in case $\Lambda \ne 0$.
Baker's theorem, as well as its $p$-adic analogue, play a critical role in the proof of \cite{tijdeman_distan_between_terms_algeb_recur_sequen} that the Skolem Problem is decidable for linear recurrence sequences of order at most 4, as well as decidability of the Positivity Problem for low-order sequences \cite{ouaknine13_posit_probl_low_order_linear_recur_sequen}.

\begin{theorem}[Special case of the main theorem in \cite{baker-rational-sharp-version-1993}]
	\label{baker-standard-baker}
	Let $\Lambda$ be as above, $D$ be the degree of the field extension $\rat(\alpha_1, \ldots, \alpha_m)/\rat$, and suppose $A,B \ge 3$ are such that $A > H(\alpha_i)$ and $B > |b_i|$ for all $1 \le i \le m$.
	If $\Lambda \ne 0$, then
	\[
	\log |\Lambda| > -(16mD)^{2(m+2)} (\log A)^m \log B.
	\] 
\end{theorem}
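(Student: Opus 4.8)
The plan is not to prove Baker's theorem from first principles --- that is one of the deepest results of transcendental number theory, resting on an auxiliary-function construction, an interpolation/extrapolation argument, and a zero estimate on group varieties --- but rather to extract the clean closed form stated above from the fully general bound established in \cite{baker-rational-sharp-version-1993}. The general statement there is parametrised more finely: it assigns to each $\alpha_i$ its own height parameter $A_i$ (with $A_i$ an upper bound for $H(\alpha_i)$, or for the relevant logarithmic height, together with a lower cutoff), records the degree $D = [\rat(\alpha_1,\ldots,\alpha_m):\rat]$ separately, and bounds a nonzero $\Lambda$ in the shape $\log|\Lambda| > -C(m,D)\,(\log A_1)\cdots(\log A_m)\,\log B'$, where $B'$ bounds $\max_i |b_i|$ and $C(m,D)$ is an explicit quantity of the form $(c\, m D)^{c' m}$ for absolute constants $c,c'$.

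First I would instantiate the general theorem with the uniform choices $A_i := A$ for every $i$ and $B' := B$. This is legitimate because by hypothesis $A > H(\alpha_i)$ for all $i$, $B > |b_i|$ for all $i$, and $A,B \ge 3$ clears whatever fixed lower cutoff (typically $e$) the source imposes. The substitution immediately gives $\log|\Lambda| > -C(m,D)\,(\log A)^m \log B$, which already has the desired functional form in $A$, $B$, and $m$.

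Second, I would do the bookkeeping on the constant: one checks that the explicit $C(m,D)$ of \cite{baker-rational-sharp-version-1993}, after accounting for (i) any polynomial-in-$m$ and power-of-$D$ prefactors, and (ii) the passage from the logarithmic (Weil) height normalisation used there to the naive height $H(\cdot)$ of the preliminaries --- which costs at most a bounded factor per logarithm, hence at most a factor depending on $m$ and $D$ overall --- is dominated by $(16mD)^{2(m+2)}$ for all $m \ge 1$ and $D \ge 1$. The slack $A,B \ge 3$ is precisely what allows the lower-order additive terms and these normalisation factors to be absorbed into the single closed-form constant $(16mD)^{2(m+2)}$.

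The only genuine obstacle is this normalisation matching: Baker-type bounds appear in the literature with the naive height $H(\alpha)$, the logarithmic height $h(\alpha)$, or the absolute logarithmic height, and these differ by factors that depend on $D$ (and sometimes on $m$ through the number of factors); tracking them carefully enough to certify that the resulting constant is genuinely $\le (16mD)^{2(m+2)}$ is where care is needed. Everything else is a direct specialisation of the cited theorem, and no new mathematical content is introduced.
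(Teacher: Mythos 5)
The paper offers no proof of this theorem: it is stated as a direct citation of \cite{baker-rational-sharp-version-1993}, with the bracketed annotation ``Special case of the main theorem in \dots'' serving in place of an argument. Your proposal correctly identifies the nature of the task (specialisation of the cited general bound, not a proof from first principles) and correctly isolates the one nontrivial step: reconciling the height normalisation used in the source with the naive height $H(\cdot)$ used here, and verifying that the resulting constant is absorbed by $(16mD)^{2(m+2)}$. In that sense you take the same route the authors implicitly take by citing.

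The only caveat worth raising is that your second paragraph gestures at the bookkeeping without carrying it out, and it is precisely there that a reader could not reconstruct the bound from your description alone. In particular, the source in question works with the modified logarithmic (Weil-type) height $h'(\alpha_i) = \max\{h(\alpha_i),\,|\log\alpha_i|/D,\,1/D\}$ rather than $\log H(\alpha_i)$, and the passage between the two introduces $D$-dependent factors inside each of the $m$ parenthesised logarithms, not merely a single multiplicative prefactor. Your claim that the cost is ``at most a bounded factor per logarithm, hence at most a factor depending on $m$ and $D$ overall'' is the right intuition, and the generous slack $A,B\ge 3$ (so $\log A,\log B\ge 1$) is indeed what makes the absorption work; but without the explicit constant $C(m,D)$ from the source in hand, one cannot certify the specific exponent $2(m+2)$ and base $16$. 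Since the paper itself does not attempt this and simply quotes the closed form, your honest flag that this is ``where care is needed'' is the appropriate stopping point, and there is no substantive disagreement with the paper's treatment.
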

A consequence of Baker's theorem is the following \cite[Corollary 8]{ouaknine_simple-positivity}.
\begin{lemma}
	\label{baker-conseq}
		Let $\alpha,\beta\in \alg$ with $|\alpha| =1$.
	For all $n \ge 2$, if $\alpha^n \ne \beta$ then 
	$
	|\alpha^n -\beta| > n^{-C}
	$
	where $C$ is an effective constant that depends only $\alpha$ and $\beta$.
\end{lemma}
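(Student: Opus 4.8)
The plan is to derive the bound from the special case of Baker's theorem (\autoref{baker-standard-baker}) by passing to logarithms. Write $\alpha = e^{\im 2\pi\theta}$ for some real $\theta$, so that $|\alpha| = 1$ and $\Log\alpha = \im 2\pi\theta$ (up to choosing the principal branch; since $|\alpha|=1$ the logarithm is purely imaginary). The first step is to handle the trivial cases: if $\beta = 0$ then $|\alpha^n - \beta| = 1 > n^{-C}$ for any $C \ge 0$, and if $|\beta| \ne 1$ then $|\alpha^n - \beta| \ge \big||\alpha^n| - |\beta|\big| = |1 - |\beta||$ is a positive constant independent of $n$, so again the bound holds for large enough $C$. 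Hence I may assume $|\beta| = 1$ as well, say $\beta = e^{\im 2\pi\psi}$.

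Next, the key geometric observation: for points on the unit circle, the chord length $|\alpha^n - \beta|$ is comparable to the arc length, i.e. there is an absolute constant such that $|\alpha^n - \beta| \ge c \cdot \operatorname{dist}(n\Log\alpha - \Log\beta,\ 2\pi\im\intg)$ — more precisely $|e^{\im x} - e^{\im y}| = 2|\sin((x-y)/2)| \ge \tfrac{2}{\pi}\operatorname{dist}(x-y, 2\pi\intg)$ whenever the distance is at most $\pi$. So it suffices to lower-bound $|n\Log\alpha - \Log\beta - 2\pi\im k|$ for the nearest integer $k$. Now $2\pi\im = \Log(-1) + \Log(-1)$ can itself be written as a $\intg$-linear combination of logarithms of algebraic numbers (e.g. $\Log(-1) = \im\pi$), so the quantity $n\Log\alpha - \Log\beta - 2\pi\im k$ is exactly a linear form in logarithms $\Lambda = n\Log\alpha + (-1)\Log\beta + (-2k)\Log(-1)$ with algebraic arguments $\alpha,\beta,-1$ all of bounded height (independent of $n$), and integer coefficients bounded by $\max(n, 1, 2k)$. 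Since $|k| \le n$ (the nearest integer to $n\theta - \psi$ is $O(n)$), we may take $B = 3n$ and $A$ a constant depending only on $\alpha,\beta$.

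The assumption $\alpha^n \ne \beta$ ensures $\Lambda \ne 0$ — here one must be slightly careful that $\alpha^n = \beta$ is genuinely equivalent to $\Lambda = 0$, which holds because $e^{\Lambda} = \alpha^n\beta^{-1}(-1)^{-2k} = \alpha^n\beta^{-1}$, so $\Lambda = 0 \iff \alpha^n = \beta$ (using that $\Lambda$, being a bounded-in-a-fundamental-domain quantity after the reduction, is the principal logarithm). Then \autoref{baker-standard-baker} with $m = 3$, $D = [\rat(\alpha,\beta):\rat]$, $A$ constant, $B = 3n$ gives
\[
\log|\Lambda| > -(48D)^{10}(\log A)^3 \log(3n),
\]
so $|\Lambda| > (3n)^{-C'}$ with $C' = (48D)^{10}(\log A)^3$ an effective constant depending only on $\alpha$ and $\beta$. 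Combining with the chord-versus-arc inequality yields $|\alpha^n - \beta| > n^{-C}$ for a suitable effective $C$ and all $n \ge 2$ (absorbing the constants $c$ and the factor $3^{C'}$ into $C$, possibly enlarging it; the finitely many small $n$ are harmless since for those $\alpha^n \ne \beta$ already forces a positive lower bound). The main obstacle is bookkeeping: making sure the coefficient $k$ is genuinely $O(n)$ so that $B$ is polynomial in $n$, that $\Lambda \ne 0$ really is equivalent to $\alpha^n \ne \beta$ after introducing the $\Log(-1)$ terms, and that all height parameters stay independent of $n$ — none of this is deep, but it needs care to get a clean statement with a constant depending on $\alpha,\beta$ only.
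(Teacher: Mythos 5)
Your proof is correct and follows essentially the same route the paper relies on: the paper does not prove Lemma~\ref{baker-conseq} itself but quotes it as a known consequence of Baker's theorem (Corollary~8 of the cited positivity paper), and that argument is precisely your reduction --- handle $|\beta|\ne 1$ separately, compare chord with arc length on the unit circle, and apply \autoref{baker-standard-baker} to the linear form $n\Log\alpha-\Log\beta-2k\Log(-1)$ with $|k|=O(n)$, so that $B$ may be taken linear in $n$ while $A$ and $D$ depend only on $\alpha,\beta$. Your bookkeeping (the direction $\alpha^n\ne\beta\Rightarrow\Lambda\ne 0$, absorbing the constants into $C$ for $n\ge 2$) is sound, so nothing is missing.
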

If $\alpha$ is not a root of unity, $\alpha^n = \beta$ holds for at most one $n$ which can be effectively bounded.
\begin{lemma}
	Let $\alpha, \beta \in \alg$ be non-zero, and suppose $\alpha$ is not a root of unity.
	There exists effectively computable $N\in \nat$ such that $\alpha^n \ne \beta$ for all $n > N$. 
\end{lemma}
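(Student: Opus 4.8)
The plan is to show there is at most one candidate value of $n$ and to bound it effectively, splitting on whether $\alpha$ and $\beta$ lie on the unit circle. First, if $\alpha^{n_1} = \beta = \alpha^{n_2}$ with $n_1 \neq n_2$, then $\alpha^{n_1 - n_2} = 1$ (recall $\alpha \neq 0$), contradicting that $\alpha$ is not a root of unity; so at most one $n$ can work. If $|\alpha| \neq 1$, then $n \mapsto |\alpha|^n$ is strictly monotone, so $|\alpha^n| = |\beta|$ forces $n = \log|\beta|/\log|\alpha|$, call it $n_0$; since $|\alpha|^2 = \alpha\overline{\alpha}$ and $|\beta|^2 = \beta\overline{\beta}$ are algebraic (complex conjugation being an automorphism of $\com$), $n_0$ is a computable real and we may take $N = \lceil |n_0| \rceil + 1$. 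If $|\alpha| = 1 \neq |\beta|$, then $|\alpha^n| = 1 \neq |\beta|$ for every $n$, so $N = 0$ works. This leaves the main case $|\alpha| = |\beta| = 1$.

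In that case I would work with the group $G = G(\alpha,\beta) \subseteq \intg^2$ of multiplicative relations (\autoref{sec:kronecker}), which is legitimate here precisely because both arguments lie on the unit circle. Since $\alpha$ is not a root of unity, $G$ has rank at most $1$: two $\intg$-independent relations $(a,b),(c,d) \in G$ would, after raising them to the powers $d$ and $b$ and dividing, yield $\alpha^{ad-bc} = 1$ with $ad - bc \neq 0$. If $G = \{\zerovec\}$ there is no solution and $N = 0$; otherwise $G = \intg\cdot(p,q)$ for a primitive generator $(p,q)$, which by Masser's bound has $\Vert(p,q)\Vert_\infty$ bounded by a fixed polynomial in $\Vert\alpha\Vert + \Vert\beta\Vert$ (\autoref{sec:kronecker}), and so is computable. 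Note $q \neq 0$, since $q = 0$ would give $\alpha^p = 1$ with $p \neq 0$. Now $\alpha^n = \beta$ holds if and only if $(n,-1) \in G$, i.e. $(n,-1) = m(p,q)$ for some $m \in \intg$; then $mq = -1$ forces $|m| = |q| = 1$, hence $n = \pm p$. Thus $N = |p|$ is an effectively computable bound; if one also wants to decide whether a solution actually exists, simply test $\alpha^{|p|} = \beta$ by exact computation with algebraic numbers.

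The only real obstacle is this last case: Baker's theorem and \autoref{baker-conseq} bound $|\alpha^n - \beta|$ from below only when it is nonzero, and therefore cannot on their own bound the (unique) exponent at which the quantity vanishes; the effective ingredient that does the job is Masser's bound on the lattice of multiplicative relations. (Alternatively, all three cases can be handled uniformly via the absolute logarithmic Weil height $h$: by Kronecker's theorem $h(\alpha) > 0$ as $\alpha$ is not a root of unity, and $\alpha^n = \beta$ forces $n\,h(\alpha) = h(\alpha^n) = h(\beta)$, so the only candidate is $n = h(\beta)/h(\alpha)$, which is effectively computable --- arguably the cleanest route.)
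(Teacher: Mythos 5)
Your main argument is correct, but the paper takes a different (and shorter) route. You split into three cases on the moduli of $\alpha$ and $\beta$: when $|\alpha|\neq 1$ you exploit monotonicity of $|\alpha|^n$; when $|\alpha|=1\neq|\beta|$ there is trivially no solution; and in the delicate case $|\alpha|=|\beta|=1$ you invoke Masser's bound on the group of multiplicative relations $G(\alpha,\beta)$, observing that non-root-of-unity $\alpha$ forces $G$ to have rank at most one and then extracting the unique candidate exponent from a primitive generator. That works, and it has the virtue of re-using machinery already set up in \autoref{sec:kronecker}. The paper instead gives exactly your parenthetical ``alternative'': use the Weil height $h$, note $h(\alpha)>0$ (since $\alpha$ is non-zero and not a root of unity) and $h(\alpha^n)=n\,h(\alpha)$, so $\alpha^n=\beta$ forces $n = h(\beta)/h(\alpha)$, whence $N=\lceil h(\beta)/h(\alpha)\rceil$. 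The height argument is uniform (no case split), avoids Masser entirely, and is the one-line proof --- but your version is a reasonable elementary substitute if one prefers to stay within the toolkit the paper already uses for Kronecker-type arguments. Your diagnostic remark is also on point: Baker/\autoref{baker-conseq} only bounds $|\alpha^n-\beta|$ away from zero when it \emph{is} non-zero, so it cannot by itself bound the exceptional exponent; some independent effective ingredient (Masser or Weil height) is needed.
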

\begin{proof}
	The \emph{Weil height} \cite[Chapter 3.2]{waldschmidt2013diophantine} of $\alpha$, denoted by $h(\alpha)$, is non-zero under the assumption on $\alpha$ and satisfies $h(\alpha^n) = nh(\alpha)$.
	We can therefore choose $N = \lceil h(\beta)/h(\alpha)\rceil$.
\end{proof}
Combining the two lemmas above, we obtain the following.
\begin{theorem}
	\label{baker-conseq-2}
	Let $\alpha, \beta \in \alg$, and suppose that $|\alpha| = 1$ and $\alpha$ is not a root of unity.
	There exist effectively computable $N, C \in \nat$ such that for all $n > N$, $|\alpha^n-\beta|>n^{-C}$. 
\end{theorem}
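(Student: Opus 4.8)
The plan is to combine the two preceding lemmas, treating separately the (finitely many) exceptional indices $n$ where $\alpha^n$ might coincide with $\beta$ and the cofinitely many indices where it does not. First I would invoke the second lemma above: since $\alpha$ has modulus $1$ and is not a root of unity, it is in particular non-zero and not a root of unity, so there is an effectively computable $N_0 \in \nat$ (namely $N_0 = \lceil h(\beta)/h(\alpha) \rceil$ via the Weil height) such that $\alpha^n \ne \beta$ for all $n > N_0$. This disposes of the possibility that the quantity $|\alpha^n - \beta|$ vanishes once $n$ is large enough.

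Next I would apply \autoref{baker-conseq}: for all $n \ge 2$ with $\alpha^n \ne \beta$ we have $|\alpha^n - \beta| > n^{-C_0}$, where $C_0$ is an effective constant depending only on $\alpha$ and $\beta$. Setting $N = \max(N_0, 2)$ and $C = C_0$, for every $n > N$ both hypotheses are met — $n \ge 2$ and $\alpha^n \ne \beta$ — so $|\alpha^n - \beta| > n^{-C}$, as required. Both $N$ and $C$ are effectively computable because $N_0$ and $C_0$ are, and the $\max$ with $2$ is trivial.

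There is essentially no obstacle here beyond bookkeeping: the real content has already been extracted in \autoref{baker-conseq} (the Baker-theorem lower bound on $|\alpha^n - \beta|$ away from the exceptional set) and in the height lemma (effective finiteness of that exceptional set). The only point requiring a word of care is that \autoref{baker-conseq} is stated conditionally on $\alpha^n \ne \beta$, so one must first pass to $n$ large enough to guarantee this — which is exactly what the height lemma provides — before quoting the bound; a reader should be reminded that the two effective constants combine additively/by maximum and that $C$ may be taken to be the constant $C_0$ from \autoref{baker-conseq} unchanged.
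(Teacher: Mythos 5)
Your proposal is correct and matches the paper's intent exactly: the paper presents the theorem as an immediate consequence of combining \autoref{baker-conseq} and the preceding height lemma, and your argument carries out precisely that combination, taking $N = \max(N_0,2)$ and $C = C_0$.
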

The next lemma summarises the family of linear recurrence sequences to which we can apply Baker's theorem.
\begin{lemma}
	\label{3d-lrs-lemma}
	Let $\gamma \in \{z \in \alg \st |z| = 1\}$ be not a root of unity, $r_1,\ldots,r_\ell \in~\rel$ be non-zero, and
	$
	u_n = \sum_{i=1}^m c_i\Lambda_i^n
	$
	be an LRS over $\rel$ where $m \ge 1$, $c_i, \Lambda_i \in \alg$ are non-zero for all~$i$, and $\Lambda_1,\ldots, \Lambda_m$ are pairwise distinct.
	Suppose each $\Lambda_i$ is in the multiplicative group generated by $\{\gamma, r_1, \ldots, r_\ell\}$.
	\begin{itemize}
		\item[(a)] There exists effectively computable $N_1$ such that $u_n \ne 0$ for all $n > N_1$.
		\item[(b)] For $n > N_1$, $|u_n| > L^n n^{-C}$, where $L = \max_{i} |\Lambda_i|$ and $C$ is an effectively computable constant.
		\item[(c)] It is decidable whether $u_n \ge 0$ for all $n$.
	\end{itemize}
\end{lemma}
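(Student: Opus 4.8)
The plan is to reduce each part to the Baker-type bound in \autoref{baker-conseq-2}, after factoring out a dominant term. First I would set up notation: since each $\Lambda_i$ lies in the multiplicative group generated by $\{\gamma, r_1,\ldots,r_\ell\}$ and $|\gamma| = 1$, $|r_j| \in \rel_{>0}$, we have $|\Lambda_i| = \prod_j |r_j|^{e_{i,j}}$ for integer exponents, and writing $\Lambda_i = |\Lambda_i|\,\gamma_i$ with $|\gamma_i| = 1$, each $\gamma_i$ is a power of $\gamma$ times a product of signs $\pm 1$ — so $\gamma_i$ is either a root of unity times a power of $\gamma$, hence $\gamma_i = \pm\gamma^{k_i}$ for some $k_i \in \intg$ (up to also allowing $\gamma_i = \gamma^{k_i}$ or $-\gamma^{k_i}$). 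Let $L = \max_i |\Lambda_i|$ and let $I = \{i : |\Lambda_i| = L\}$ be the indices attaining the maximum. For \textbf{(a)} and \textbf{(b)}, divide through by $L^n$:
\[
\frac{u_n}{L^n} = \sum_{i \in I} c_i \gamma_i^n + \sum_{i \notin I} c_i \left(\frac{\Lambda_i}{L}\right)^n,
\]
where the second sum tends to $0$ geometrically. The first sum $s_n := \sum_{i\in I} c_i\gamma_i^n$ is an LRS whose characteristic roots $\gamma_i$ all lie on the unit circle, and — crucially — the set $\{\gamma_i : i \in I\}$ consists of elements of the form $\pm\gamma^{k_i}$; grouping the terms with $\gamma_i = \gamma^{k}$ and $\gamma_i = -\gamma^{k}$ and using that $\gamma$ is not a root of unity, $s_n$ is a genuine (not identically zero) LRS of this shape. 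The key subcase is when $|I| = 1$: then $s_n = c\,(\pm1)^n\gamma^{kn}$, of constant modulus $|c|$, and $|u_n| \ge |c| L^n - (\text{geometric error})$ gives (b) directly with $C$ coming from the error term, and (a) follows. When $|I| \ge 2$, I would either (i) reduce to \autoref{baker-conseq-2} by considering a difference like $s_n$ against a nonzero constant after extracting one term, or (ii) more robustly, combine \autoref{thm:kronecker-cor3} (to get $|s_n| > c$ infinitely often) with a Baker-style lower bound on the "gaps": the point is that $s_n$, being a sum of at least two distinct unimodular exponentials $c_i\gamma^{k_i n}$ (after sign-bookkeeping), satisfies $|s_n| > n^{-C'}$ for all large $n$ by applying \autoref{baker-conseq-2} to the relevant ratio $\gamma^{(k_i - k_j)n}$ against $-c_j/c_i$ — this is exactly the kind of argument used for simple LRS in \cite{ouaknine_simple-positivity}. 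Combining, $|u_n| \ge L^n(|s_n| - o(1)) \ge L^n n^{-C}$ for large $n$, which is (b); in particular $u_n \ne 0$ for $n > N_1$, which is (a).

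For \textbf{(c)}, once (a) is established the zero set of $\seq{u_n}$ is finite and effectively bounded, so positivity fails detectably if some $u_n < 0$ at small $n$. For large $n$, the sign of $u_n$ equals the sign of $s_n$ up to the $o(L^n)$ error, and by part (b) $|s_n| > n^{-C}$ eventually dominates the error term $\sum_{i\notin I}|c_i|(|\Lambda_i|/L)^n$ (which decays geometrically), so for $n$ beyond an effective threshold $N_2$ the sign of $u_n$ is the sign of $s_n$. It then remains to decide whether $s_n \ge 0$ for all $n > N_2$: this is an ultimate positivity question for an LRS all of whose roots are of the form $\pm\gamma^k$, and here I would invoke Kronecker/Weyl density of $(\gamma^n)_{n}$ in the unit circle — the closure of $\{(\gamma_i^n)_{i\in I} : n\}$ is an explicit subtorus, and $s_n$ restricted to it is a fixed continuous (indeed real-analytic) function; $s_n \ge 0$ ultimately iff this function is $\ge 0$ on the closure, which by compactness and effectivity of the density (via Masser's bound, \autoref{sec:kronecker}) can be checked, e.g.\ by deciding a sentence of $\operatorname{Th}(\rel_0)$ or by the argument of \cite[Prop.~3.5]{ouaknine2014ultimate-pos-for-simple-lrs}. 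Finitely many small cases $n \le N_2$ are checked directly.

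The main obstacle is part (b)/(c) in the case $|I| \ge 2$: getting the polynomial-in-$n$ lower bound $|s_n| > n^{-C}$ on the unimodular part is exactly where Baker's theorem is essential and where the bookkeeping is delicate — one must ensure that after absorbing signs, the exponentials $\gamma^{k_i n}$ appearing in $s_n$ are genuinely distinct (no hidden cancellation forcing $s_n \equiv 0$, handled by \autoref{thm:exp-poly-of-id-zero}) and that the constant $C$ is effective, which requires tracking the heights of the $c_i$ and the exponents $k_i$ through \autoref{baker-conseq}. The reduction to a single application of \autoref{baker-conseq-2} is cleanest when there are exactly two dominant unimodular terms; for three or more, I would iterate the "subtract a shifted copy" trick from the proof of \autoref{thm:exp-poly-of-id-zero} (pairing $u_{n+1}$ with $\gamma$-multiples of $u_n$) to reduce the number of dominant terms while preserving effectivity, until reaching the base case.
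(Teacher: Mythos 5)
Your overall structure matches the paper closely: split off the dominant modulus $L^n$, analyse the unimodular dominant part $s_n$ (equivalently, the paper's $v_n/L^n$) as a Laurent polynomial in $\gamma^n$, apply a Baker-type bound to get $|s_n| > n^{-C}$ eventually, absorb the geometrically-decaying remainder, and decide (c) by looking at the range of the fixed function $f$ on the unit circle using density of $(\gamma^n)$. Your attention to the signs $\pm\gamma^{k_i}$ (arising because the $r_j$ may be negative) is in fact more careful than the paper, which writes $\Lambda_i = |\Lambda_i|\gamma^{k_i}$ without comment; the fix (split into parity classes of $n$) is routine but worth flagging.

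The genuine gap is in part (b) when there are three or more dominant terms, which is exactly the step you flag as delicate but do not resolve correctly. Neither of your two alternatives works as stated. Alternative (ii) invoking \autoref{thm:kronecker-cor3} only yields $|s_n| > c$ for \emph{infinitely many} $n$, not for all large $n$, which is strictly weaker than what (a) and (b) require. Applying \autoref{baker-conseq-2} to pairwise ratios $\gamma^{(k_i-k_j)n}$ against $-c_j/c_i$ bounds each pairwise difference from below, but a sum of three or more unimodular terms can be small even when every pair of terms is far apart, so this does not control $|s_n|$. Your fallback of iterating the subtract-a-shifted-copy trick $t_n = s_{n+1} - \gamma_k s_n$ is designed to propagate \emph{non-vanishing} (it shows $s_n$ is not identically zero), but it does not propagate a \emph{lower bound}: from $|t_n| > n^{-C}$ one only gets $\max(|s_n|,|s_{n+1}|) \ge n^{-C}/2$, i.e., a bound on at least half of the indices, not on all of them. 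The paper's actual device is different and cleaner: since each $\gamma_i$ is a power of $\gamma$, one writes $s_n = \gamma^{-Kn}\, p(\gamma^n)$ for a fixed one-variable polynomial $p \in \alg[z]$, factors $p(z) = b_K\prod_i (z - \alpha_i)$ over $\alg$, and applies \autoref{baker-conseq-2} \emph{to each linear factor} $(\gamma^n - \alpha_i)$ separately; multiplying the resulting lower bounds $n^{-C_i}$ gives $|s_n| > n^{-C}$ for all large $n$ in one stroke, with $C$ and the threshold $N_1$ effective. This factorisation is the missing key idea; once you have it, your parts (a), (b), (c) go through essentially as you planned.
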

\begin{proof}
	Define $\mathcal{D} = \{i \colon |\Lambda_i| = L\}$ and $\mathcal{R} = \{i \colon |\Lambda_i| < L\}$.
	The terms $c_i\Lambda_i^n$ with $i \in \mathcal{D}$ are called \emph{dominant}.
	We have
	\[
	u_n = \underbrace{\sum_{i \in \mathcal{D}} c_i \Lambda_i^n}_{v_n} +  \underbrace{\sum_{i \in \mathcal{R}} c_i \Lambda_i^n}_{z_n}. 
	\]
	We first investigate $|v_n|$ as $n \to \infty$.
	Recall that each $\Lambda_i$ is of the form $\gamma^{m_0}r_1^{m_1}\cdots r_\ell^{m_\ell}$, where $m_0, \ldots,m_\ell \in \intg$.
	That is, for all $i$, $\Lambda_i = |\Lambda_i|\gamma^{k_i}$ for some $k_i \in \intg$.
	Hence we can write
	\begin{equation}
		\label{baker-eq1}
		v_n = L^n \sum_{i=-K}^K b_i \gamma^{in}
	\end{equation}
	where each $b_i$ is equal to some $c_j$ and $b_K \ne 0$.
	The values of~$i$ range over $\{-K,\ldots,K\}$ because $v_n$ is real-valued and hence the summands in \eqref{baker-eq1} appear in conjugate pairs.
	We have 
	\[
	v_n = \gamma^{-Kn}L^n  \sum_{i=0}^{2K} b_{-K+i}\gamma^{in} =  \gamma^{-Kn}L^n  \prod_{i=0}^{2K}(\gamma^n-\alpha_i)
	\]
	where $\alpha_0, \ldots, \alpha_{2K} \in \alg$ are the zeros of the polynomial $p(z) = \sum_{i=0}^{2K} b_{-K+i}z^i$.
	Since $\gamma$ is not a root of unity, we can apply \autoref{baker-conseq-2} to each factor $(\gamma^n-\alpha_i)$ to conclude that there exist effectively computable $N_1,C$ such that $|v_n| >L^n n^{-C}$ for al $n > N_1$.
	Since $|\Lambda_i| < L$ for all $i \in \mathcal{R}$, there exists (effectively computable) $N_2$ such that $|v_n| > |z_n|$ for $n > N_2$.
	We have proven (a) and (b).
	
	Since $u_n$ is real-valued, by \autoref{thm:real-lrs-galois-condition} for each $1 \le i \le m$ there exists $1 \le j \le m$ such that $c_j\Lambda_j = \overline{c_i\Lambda_i}$.
	Hence both $v_n$ and $z_n$ are real-valued.
	By the analysis above $\operatorname{sign}(u_n) = \operatorname{sign}(v_n)$ for $n > N_2$.
	Hence to check if $u_n$ is positive we have to check whether $u_n \ge 0$ for $0 \le n \le N_2$ and $v_n \ge 0$ for $n > N_2$.
	We show how to do the latter.
	Since $\gamma$ is not a root of unity, $\seq{\gamma^n}$ is dense in $e^{\im2\pi \torus}$.
	Define $f(z) = z^{-K}p(z)$, noting that $v_n = L^n \cdot f(\gamma^n)$.
	Consider $Z \coloneqq f(e^{\im2\pi\torus}) \subset \rel$, which is compact and equal to the closure of $\{\gamma^{-Kn} L^n p(\gamma^n) \mid n \in \nat\}$.
	If $Z$ contains a negative number, then by density of $\seq{\gamma^n}$ in $e^{\im2\pi\torus}$, $v_n$ is negative for infinitely many~$n$.
	Hence $u_n < 0$ for infinitely many $n$.
	Otherwise, $v_n \ge 0$ for all $n$ and hence $u_n \ge 0$ for all $n > N_2$.
	This concludes the proof of~(c).
\end{proof}

\subsection{Satisfaction of energy constraints}
\label{sec:energy}
We next discuss \emph{energy constraints}.
We say that a series of real weights $(w_i)_{i\in \mathbb{N}}$ satisfies the energy constraint with budget $B$ if 
\[
\sum_{i=0}^k w_i \geq - B
\]
for all $k\in \mathbb{N}$.
We will prove that for LDS $(M,q)$ of dimension at most 3, satisfaction of energy constraints is decidable.
The proof is based on the fact that three-dimensional systems are tractable thanks to Baker's theorem \cite{karimov-ltl-3d}.
For higher-dimensional systems,  no such tractability result is known.
We will show that deciding satisfaction of energy constraints is, in general, at least as hard as the Positivity Problem, already with linear weight functions.
Before giving our decidability result, we need one final ingredient about partial sums of LRS.
Let $w_n = n^l \lambda^n$ for some $l \ge 0$ and $\lambda \in \alg$, and $u_n = \sum_{k=0}^n w_k$.
If $\lambda=1$, then $u_n = p(n)$, where $p$ is a polynomial of degree $l+1$ with rational coefficients.
If $\lambda \ne 1$, then $u_n = q(n)\lambda^n$, where $q$ is a polynomial of degree at most $l$ with algebraic coefficients satisfying $ q(n+1)\lambda = q(n)+ n^l$.
It follows that if the LRS $\seq{w_n}$ has only real eigenvalues, then so does the sequence given by $u_n = \sum_{k=0}^nw_k$.
Similarly, if $\seq{w_n}$ is diagonalisable and does not have~1 as an eigenvalue, then the same applies to $\seq{u_n}$.
In fact, the eigenvalues of $\seq{u_n}$ form a subset of the eigenvalues of $\seq{w_n}$.
\begin{theorem}
	\label{thm:energy_dimension_3}
	Let $M\in \mathbb{Q}^{3\times 3}$, $q\in\mathbb{Q}^3$, $\delta \le 1$ be a discount factor, and $w\colon \rel^3\to\rel$ be a polynomial weight function with rational coefficients.
	For $B\in \mathbb{Q}_{\geq 0}$, it 
	is decidable whether the weights $\seq{\delta^n \cdot w(M^nq)}$ satisfy the energy constraint with budget $B$.
\end{theorem}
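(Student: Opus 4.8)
The plan is to turn the energy-constraint question into a Positivity question for a single linear recurrence sequence whose characteristic roots have a very restricted shape when $d=3$, and then invoke Baker's theorem. Set $w_n = \delta^n w(M^n q)$ and $s_n = B + \sum_{k=0}^{n} w_k$, so that $\seq{w_n}$ satisfies the energy constraint with budget $B$ if and only if $s_n \ge 0$ for all $n$. Since each coordinate sequence $e_i^\top M^n q$ is a rational LRS and rational LRS are closed under sums and products, $\seq{w_n}$ is a rational LRS; by the argument of \autoref{thm:LRS_poly_weight} its sequence of partial sums is again a rational LRS, and adding the constant $B$ does not change this. Applying the non-degeneracy reduction from \autoref{sec:lrs}, compute $R>0$ such that each subsequence $s^{(r)}_n = s_{nR+r}$ with $0 \le r < R$ is non-degenerate; then the energy constraint holds iff $s^{(r)}_n \ge 0$ for all $n$ and all $r$, so it suffices to decide Positivity of each $s^{(r)}_n$.

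Next I would analyse the shape of the characteristic roots. The eigenvalues $\lambda_1,\lambda_2,\lambda_3$ of $M$ are the roots of a rational cubic, hence either all three are real, or one is real and the remaining two are $\rho e^{\pm \im\theta}$ with $\rho>0$. Using that the entries of $M^n q$ are LRS with characteristic roots among the $\lambda_i$, that $w$ is a polynomial, and the description of partial sums recorded just before the statement, every characteristic root of $s_n$ — and hence, after taking $R$-th powers, of each $s^{(r)}_n$ — is either $1$ or $\delta$ times a monomial $\lambda_1^{a_1}\lambda_2^{a_2}\lambda_3^{a_3}$ with $\sum_i a_i \le \deg w$, possibly carried by a polynomial-in-$n$ coefficient. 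In the all-real case these roots are all real and, after the reduction, non-negative, so unless $s^{(r)}_n$ is eventually zero it has a unique dominant characteristic root, a positive real; thus $\operatorname{sign}(s^{(r)}_n)$ is eventually the sign of the leading coefficient of the associated polynomial, an algebraic number effectively comparable with $0$, and one checks the remaining finitely many terms directly (this sub-case in fact needs no Baker input). In the mixed case, if $e^{\im\theta}$ is a root of unity I refine the residue modulus so that $(e^{\im\theta})^R=1$, after which the complex eigenvalues contribute only positive reals to each subsequence and we are back in the previous case; otherwise every characteristic root of $s^{(r)}_n$ lies in the multiplicative group generated by $\{\gamma, \delta, \rho_0, \rho\}$, where $\gamma = e^{\im R\theta}$ is a non-root-of-unity element of the unit circle and $\delta$ (recall $\delta\in(0,1]$), $\rho_0$, $\rho$ are positive reals.

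It then remains to handle the mixed, non-root-of-unity case, where each $s^{(r)}_n$ is exactly of the form treated by \autoref{3d-lrs-lemma}(c) — taking $\gamma$ as above and the real generators among $\delta,\rho_0,\rho$ — whose proof uses the Baker-type bound of \autoref{baker-conseq-2}; this yields decidability of Positivity for each $s^{(r)}_n$ and completes the proof.

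The main technical obstacle is that \autoref{3d-lrs-lemma} as stated concerns simple LRS with constant coefficients, whereas $s^{(r)}_n$ may carry polynomial coefficients, coming from the eigenvalue $1$ contributed by $B$ and from a possibly non-diagonalisable $M$. One must therefore re-run its proof in this slightly more general setting: isolate the dominant terms of maximal polynomial degree, write their sum as $n^m L^n$ times a trigonometric polynomial in $\gamma^n$, factor it over $\alg$, and apply \autoref{baker-conseq-2} to each factor $\gamma^n-\alpha_i$; one also notes that $\gamma^n$ equals any fixed algebraic number for only boundedly many $n$, so the boundary case where such a factor vanishes concerns only finitely many terms. The reason the argument does not extend beyond $d=3$ is structural: a real $3\times3$ matrix yields at most one angle $\theta$, hence a single unit-modulus generator, which is precisely what Baker's theorem can control; two multiplicatively independent angles, available already at $d=4$, make the problem Diophantine-hard (\autoref{thm:diophantine4}, \autoref{thm:energy_Diophantine}).
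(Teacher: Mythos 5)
Your proposal follows essentially the same route as the paper: rewrite the energy constraint as positivity of the rational LRS of partial sums, split on whether $M$ has a complex-conjugate pair of eigenvalues, reduce the root-of-unity sub-case to the all-real case by passing to subsequences, and dispatch the genuinely complex case via the Baker-based Lemma~\ref{3d-lrs-lemma} (ultimately Theorem~\ref{baker-conseq-2}).

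Two remarks on where the paper is tighter than your sketch. First, your worry about extending Lemma~\ref{3d-lrs-lemma} to polynomial coefficients is largely avoidable: in the mixed-eigenvalue case $M$ has three distinct eigenvalues and is therefore diagonalizable, so the only polynomial coefficient that can arise in the partial-sum LRS is a single linear term $cn$ attached to the eigenvalue $1$ (contributed by $B$ and by summation). The paper exploits this by writing $u_n = cn + v_n$ with $v_n$ simple and constant-coefficient, comparing $|cn|$ against $|v_n|$ via Lemma~\ref{3d-lrs-lemma}(b) when $L := \max_i|\Lambda_i| > 1$ and via the trivial boundedness of $v_n$ when $L\le1$, and then applying Lemma~\ref{3d-lrs-lemma}(c) only to $v_n$; no reproof with polynomial coefficients is needed. (Your phrasing ``dominant terms of maximal polynomial degree'' is also slightly off as a selection criterion, since when $L>1$ the growth-dominant roots have degree~$0$ while the eigenvalue~$1$ carries degree~$1$; dominance is by modulus first.) Second, the obstruction at $d=4$ in Theorem~\ref{thm:diophantine4} is not two multiplicatively independent angles but a Jordan block over a single conjugate pair, which places linear-in-$n$ coefficients on unit-modulus characteristic roots; the resulting interplay between the factor $n$ and the $n^{-C}$ Baker lower bound is exactly the Diophantine-hard part.
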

\begin{proof}
	Let  $w_n = \delta^n \cdot w(M^nq)$ and $u_n = B + \sum_{i=0}^n w(M^iq)$.
	We have to decide whether $u_n \ge 0$ for all~$n$.
	First suppose $M$ has only real eigenvalues.
	Then $w_n$ and $u_n$ are both LRSs with only real eigenvalues.
	By taking subsequences if necessary, we can assume $\seq{u_n}$ is non-degenerate.
	Write
	$
	u_n = \sum_{i=1}^m p_i(n)\rho_i^n
	$
	where the right-hand side is in the exponential-polynomial form.
	In particular, for all~$i$, $p_i$ is not the zero polynomial.
	Since $\seq{u_n}$ is non-degenerate, without loss of generality we can assume $\rho_1 > \ldots > \rho_m> 0$.
	If $p_1(n)$ is negative for sufficiently large $n$, then the energy constraint is not satisfied.
	Otherwise, we can compute $N$ such that for all $n > N$, $u_n > 0$.
	It remains to check whether $u_n \ge 0$ for $0 \le n \le N$.
	
	Next, suppose $M$ has non-real eigenvalues $\lambda, \overline{\lambda}$, and a real eigenvalue $\rho$.
	Write $\gamma = \lambda/ |\lambda|$ and $r = |\lambda|$.
	Then 
	$u_n$ is of the form
	\[
	u_n = cn + \sum_{i=1}^m c_i \Lambda_i^n \coloneqq cn + v_n
	\]
	where $\Lambda_1, \ldots, \Lambda_m$ are pairwise distinct and in the multiplicative group generated by $r, \rho, \delta, \gamma$.
	Without loss of generality we can assume $c_i \ne 0$ for all $i$, but $c$ may be zero.
	If $\gamma$ is a root of unity of order $k>0$ (i.e. $\gamma^k=1$), then we can take subsequences $\seq{u^{(0)}_n}, \ldots, \seq{u^{(k-1)}_n}$, where $u^{(j)}_n = u_{nk+j}$ for $n \in \nat$ and $0 \le j < k$, and each $\seq{u^{(j)}_n}$ has only real eigenvalues.
	We can then apply the analysis above.
	Hereafter we assume $\gamma$ is not a root of unity.
	
	Suppose $c = 0$.
	Then \autoref{3d-lrs-lemma}~(c) applies and we can decide whether $u_n \ge 0$for all $n$.
	Next, suppose $c \ne 0$ and $L \coloneqq \max_i |\Lambda_i|$ satisfies $L \le 1$.
	We can compute $N_2$ such that $|cn| > |v_n|$ for all $n > N_2$.
	Hence in this case $u_n \ge 0$ for all $n$ if and only if $c > 0$ and $u_n > 0$ for $0 \le n \le N_2$.
	Finally, suppose $c \ne 0$ and $L > 1$.
	Applying \autoref{3d-lrs-lemma}~(b), there exists effectively computable $N_3$ such that $|u_n| > |cn|$ for $n > N_3$.
	Hence $u_n \ge 0$ for all $n$ if and only if $u_n \ge 0$ for $0 \le n \le N_3$ and $ \sum_{i=1}^mc_i\Lambda_i^{n} \ge 0$ for $n > N_3$.
	The latter can be decided by applying \autoref{3d-lrs-lemma}~(c) to the sequence $v_n =  \sum_{i=1}^m c_i \Lambda_i^{N_3+n} =  \sum_{i=1}^m (c_i\Lambda_i^{N_3})\Lambda_i^n$.
\end{proof}

\subsection{Positivity and Diophantine hardness}
\label{sec:hardness}
Recall that the energy satisfaction problem is to decide, given a matrix $M \in \rat^{d\times d}$, $q \in \rat^d$, $B \in \rat$, and a polynomial $p$ with rational coefficients, whether there exists $n$ such that $\sum_{k=0}^n p(M^kq) < B$.
This problem is at least as hard as the Positivity Problem already for stochastic linear dynamical systems and linear weight functions.
\begin{restatable}{theorem}{lemmatwentytwo}
	\label{thm:energy_Diophantine}
	The Positivity Problem can be reduced to the energy satisfaction problem restricted to a Markov chain $(M,q)$ and a linear function~$w$.
\end{restatable}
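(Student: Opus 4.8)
The plan is to reduce the Positivity Problem for rational linear recurrence sequences to the energy satisfaction problem for Markov chains with a linear weight function, using two ingredients. The first is the persistence of Positivity-hardness under the distribution transformer semantics, essentially established in \cite{AAOW2014}: given a stochastic matrix $P\in\rat^{d\times d}$, an initial distribution $\iota\in\rat^d$, and coordinates $s,t$, it is Positivity-hard to decide whether $(P^n\iota)_s\ge(P^n\iota)_t$ for all $n\in\nat$. If one prefers a self-contained derivation, this can be obtained directly from an integer LRS $u_n=e_1^\top C^n b$ with $C$ a companion matrix: replace $C$ by $C/(2s)$, where $s$ bounds the column sums of $|C|$, and simulate this contracted map by a stochastic chain on ``signed'' states $(i,+),(i,-)$ together with a sink absorbing the residual probability mass, so that the difference of the coordinates $(1,+)$ and $(1,-)$ of the resulting orbit equals $\alpha(2s)^{-n}u_n$ for a fixed $\alpha>0$, hence has the same sign as $u_n$. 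The second ingredient, which is the only genuinely new step, is a ``history'' product construction that turns a condition on the distribution \emph{at time $n$} into a condition on a \emph{partial sum} of weights.

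For the history construction I would take $(P,\iota,s,t)$ as above and build the Markov chain $\widetilde P$ on state space $(\{1,\dots,d\})^2\cup(\{1,\dots,d\}\times\{\star\})$ whose transition from $(a,b)$, and also from $(a,\star)$, sends probability $P_{a'a}$ to $(a',a)$; this is stochastic, and with initial distribution $\widetilde\iota(a,\star)=\iota_a$ one checks by a short induction that the time-$k$ distribution $\nu_k=\widetilde P^k\widetilde\iota$ has first-coordinate marginal $P^k\iota$ and, for $k\ge1$, second-coordinate marginal $P^{k-1}\iota$. Taking the linear weight function
\[
w(a,b)=\mathbbm{1}(a=s)-\mathbbm{1}(a=t)-\mathbbm{1}(b=s)+\mathbbm{1}(b=t),\qquad w(a,\star)=\mathbbm{1}(a=s)-\mathbbm{1}(a=t),
\]
which has coefficients in $\{-1,0,1\}$, the weight collected at step $k\ge1$ equals $(P^k\iota)_s-(P^k\iota)_t-\big((P^{k-1}\iota)_s-(P^{k-1}\iota)_t\big)$, so the partial sums telescope to
\[
\sum_{k=0}^{n} w(\widetilde P^k\widetilde\iota)=(P^n\iota)_s-(P^n\iota)_t\qquad\text{for all }n\in\nat.
\]

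Finally I would set the budget $B=0$: the weighted orbit of $(\widetilde P,\widetilde\iota)$ satisfies the energy constraint with budget $0$ if and only if $(P^n\iota)_s-(P^n\iota)_t\ge0$ for all $n$, i.e.\ if and only if the given Positivity instance is positive. Since $\widetilde P$, $\widetilde\iota$, $w$ and $B$ are computed from the input in polynomial time, this is the required reduction. I expect the main difficulty to be not conceptual but a matter of careful bookkeeping: checking that $\widetilde P$ is genuinely stochastic and $\widetilde\iota$ a genuine distribution, that the telescoping identity holds with the correct treatment of the $k=0$ term and of the $\star$-states, and --- if the first ingredient is derived from scratch rather than cited --- that the sign-tracking simulation faithfully reproduces the sign of $u_n$ even though probability mass leaks into the sink. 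The conceptual crux, namely recasting ``value at time $n$'' as a running sum, is exactly what the product-with-previous-state chain achieves, and a running sum is precisely the shape the energy satisfaction problem requires.
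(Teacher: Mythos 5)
Your reduction is correct, and the conceptual crux you identify---recasting ``value at time $n$'' as a running sum via a one-step-delayed comparison and telescoping---is exactly the crux of the paper's proof as well. The difference is in how the one-step delay is realized. You build a genuine history chain on $d^2+d$ states $(a,b)$ and $(a,\star)$, with the second coordinate recording the previous state, and then take a weight function that evaluates the same linear functional on the current-state marginal and the previous-state marginal with opposite signs. The paper instead observes that no history tracking is needed: it takes $P=\begin{bmatrix}M&0\\0&M\end{bmatrix}$ on $2d$ states (two \emph{disjoint} copies of the chain, not a product), with initial vector $(\tfrac12 q,\tfrac12 Mq)$, so the second copy runs the same chain but one step ahead, and the weight $w(x)=2(x_{d+1}-x_1)$ then produces the same telescoping sum $e_1^\top M^{n+1}q-e_1^\top q$. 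The paper also uses the ``$\exists n.\ e_1^\top M^n q\ge 1/2$'' formulation of Positivity-hardness for Markov chains directly, rather than the coordinate-comparison formulation $(P^n\iota)_s\ge(P^n\iota)_t$; this forces a nonzero budget $B=\tfrac12-e_1^\top q$ and an explicit check of the $n=0$ case, whereas your formulation lets you take $B=0$ and avoids that edge case. In short: same idea, your construction is larger but more transparent, the paper's is more economical. One remark: the claim you lean on, that \cite{AAOW2014} gives Positivity-hardness of comparing two coordinates of $P^n\iota$, is equivalent to the threshold version the paper cites but is not literally what is stated there; if you use it, you should spell out the (easy) padding step that introduces the comparison coordinate.
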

\begin{proof}
	It is known from \cite{AAOW2014,mihir} that the Positivity Problem for arbitrary LRS over $\rat$ can be reduced to the following problem: given a Markov chain $(M,q)$, decide whether there exists $n$ such that $e_1^\top M^nq \ge 1/2$, where $e_1 = (1,0,\ldots,0) \in \rel^d$.
	We reduce the latter to the energy satisfaction problem.
	Given a Markov chain $(M,q) \in \rat^{d\times d} \times \rat^d$, let
	\[
	P = \begin{bmatrix}
		M & \zerovec \\
		\zerovec & M
	\end{bmatrix}
	\]
	and $t =(\frac 1 2 q, \frac 1 2 Mq) \in \rat^{2d}$.
	Observe that $(P,t)$ is also a Markov chain.
	Moreover, $P^nt = (\frac 1 2 M^nq, \frac 1 2 M^{n+1}q)$ for all $n$.
	We choose the weight function $w(x_1,\ldots,x_{2d}) = 2(x_{d+1}-x_1)$ and $B = \frac 1 2 -e_1\cdot q$.
	Then
	\[
	w(P^nq) = e_1^\top M^{n+1}q - e_1^\top M^nq
	\]
	 and $u_n \coloneqq \sum_{k=0}^n w(P^nq) \ge B$ if and only if $e_1^\top M^{n+1}q \ge \frac 1 2$.
	Hence there does not exist $n$ such that $e_1^\top M^nq \ge \frac 1 2$ if and only if $e_1\cdot q < \frac 1 2$ and there does not exist $n$ such that $u_n < B$.
\end{proof}

The Positivity Problem for rational linear recurrence sequences of order 6 was shown in \cite{ouaknine13_posit_probl_low_order_linear_recur_sequen} to be \emph{Diophantine-hard}.
Specifically, for $r \in \rat$ and $\lambda \in \rat(\im)$ (that is, $\lambda = a + b\im$ where $a,b \in \rat$) let
\[
u^{\lambda,r}_n = -n + \frac{n}{2}(\lambda^n+\overline{\lambda^n}) + \frac{r\im}{2}(\overline{\lambda^n}-\lambda^n) = r\Ima(\lambda^n) - n\Rea(\lambda^n) + n.
\]
If for all $r \in \rat$ and $\lambda \in \rat(\im)$ we can decide whether $u^{\lambda,r}_n  \ge 0$ for all $n \ge 0$, then we could compute the \emph{Lagrange constants} of a large class of numbers, which would amount to a major mathematical breakthrough in number theory.
The following theorem states that a solution to the energy satisfaction problem for rational LDS in dimension 4 with polynomial weight functions would also yield the same breakthrough.
\begin{theorem}
\label{thm:diophantine4}
The  energy satisfaction problem for rational linear dynamical systems  in dimension 4 and polynomial weight functions with rational coefficients is Diophantine-hard. 
\end{theorem}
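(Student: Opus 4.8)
The plan is to reduce the Diophantine-hard problem about the sequences $u^{\lambda,r}_n = r\Ima(\lambda^n) - n\Rea(\lambda^n) + n$ (for $r \in \rat$, $\lambda \in \rat(\im)$) to the energy satisfaction problem for a rational LDS in dimension $4$ with a polynomial (in fact quadratic) weight function. The key observation is that $u^{\lambda,r}_n$ is itself the $n$-th partial sum of an LRS-like expression, so we must realise $w_n := u^{\lambda,r}_{n+1} - u^{\lambda,r}_n$ (together with an initial correction) as $w(M^n q)$ for a $4$-dimensional rational $M$ and a rational-coefficient polynomial $w$.

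First I would set up the state space. Take $M = \begin{bmatrix} R_\lambda & 0 \\ 0 & R_\lambda\end{bmatrix} \in \rat^{4\times 4}$, where $R_\lambda = \begin{bmatrix} a & -b \\ b & a\end{bmatrix}$ is the rational rotation-scaling matrix representing multiplication by $\lambda = a+b\im$ on $\rel^2 \cong \com$. With $q = (1,0,\Rea(\mu),\Ima(\mu))$ for a suitable rational $\mu$ (or two independent copies of the same rotation started at phases $0$ and "one step ahead"), the orbit entries $e_k^\top M^n q$ give us $\Rea(\lambda^n)$ and $\Ima(\lambda^n)$ as well as $\Rea(\lambda^{n+1})$, $\Ima(\lambda^{n+1})$. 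The weight function $w$ must then produce $w_n = u^{\lambda,r}_{n+1} - u^{\lambda,r}_n$. Writing out this difference, one sees it equals $1 + r\,\Ima(\lambda^{n+1} - \lambda^n) - (n+1)\Rea(\lambda^{n+1}) + n\Rea(\lambda^n)$; the troublesome part is the factor of $n$ multiplying $\Rea(\lambda^n)$, since $n$ is not available as a coordinate. The standard trick to handle this is to fold the partial-sum structure the other way: instead of differencing, directly engineer an LDS whose orbit, when the weight $w$ is applied, has partial sums equal to $u^{\lambda,r}_n + B$. Concretely, I would use that the telescoping identity $\sum_{k=0}^n w(M^kq) = u^{\lambda,r}_n + B$ can be arranged by choosing $w$ to be a quadratic form in the coordinates of a $4$-dimensional system whose two $2\times 2$ blocks are $R_\lambda$, exploiting the fact (recalled just before Theorem~\ref{thm:energy_dimension_3}) that partial sums of $n^\ell \lambda^n$-type sequences are themselves of the form $q(n)\lambda^n$ with $\deg q \le \ell$, run in reverse: a sequence of the shape $(\alpha n + \beta)\Rea(\lambda^n) + (\gamma n + \delta)\Ima(\lambda^n) + (\text{linear in } n)$ arises as a partial sum of a combination of $\Rea(\lambda^n), \Ima(\lambda^n)$ and products like $n\Rea(\lambda^n)$ expressed through two coupled rotation blocks.

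The cleanest route, which I would actually carry out, is the following. Note $u^{\lambda,r}_n = r\Ima(\lambda^n) - n\Rea(\lambda^n) + n$, and that the sequence $n\Rea(\lambda^n) - n$ has the form $n \cdot s_n$ where $s_n = \Rea(\lambda^n) - 1$; the partial sums of $s_n$ and of $\Ima(\lambda^n)$ are again rational linear combinations of $\Rea(\lambda^n), \Ima(\lambda^n)$, and constants. So define an auxiliary LRS $v_n$ whose partial sums realise exactly $u^{\lambda,r}_n$: concretely there exist rationals such that $v_n := u^{\lambda,r}_{n} - u^{\lambda,r}_{n-1}$ (with $v_0 := u^{\lambda,r}_0 + B$) is a rational linear combination of $\Rea(\lambda^n), \Ima(\lambda^n), n\Rea(\lambda^n), n\Ima(\lambda^n)$ and $1$ — but the terms $n\Rea(\lambda^n), n\Ima(\lambda^n)$ need a Jordan-block, i.e.\ a *repeated* eigenvalue, which would push the dimension to $6$. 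To stay in dimension $4$, I instead avoid the $n$-factor entirely by keeping $n$ implicit in the partial sum: use the $4$-dimensional LDS with two $R_\lambda$ blocks at phases $0$ and $1$ (initial vector $q = (1,0,a,b)$, so that $e_1^\top M^n q = \Rea(\lambda^n)$, $e_2 = \Ima(\lambda^n)$, $e_3 = \Rea(\lambda^{n+1})$, $e_4 = \Ima(\lambda^{n+1})$), and choose the quadratic weight $w(x_1,x_2,x_3,x_4)$ so that $w(M^nq) = u^{\lambda,r}_{n+1} - u^{\lambda,r}_n$ using the product-to-sum identities: the $n$-coefficient becomes a telescoped bilinear expression $\Rea(\lambda^{n+1})\cdot(\text{something}) - \Rea(\lambda^n)\cdot(\text{something})$ that, upon summation, collapses to $n\Rea(\lambda^n)$ up to boundary terms. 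I expect the main obstacle to be verifying precisely this telescoping: showing that a quadratic (degree-$2$) polynomial in the four coordinates $\Rea(\lambda^n), \Ima(\lambda^n), \Rea(\lambda^{n+1}), \Ima(\lambda^{n+1})$ has partial sums that reproduce the factor-of-$n$ growth in $u^{\lambda,r}_n$, which requires carefully choosing the coefficients so that the cross-terms telescope. Once $w$ and $B' := B + (\text{boundary correction})$ are in hand, the energy constraint "$\sum_{k=0}^n w(M^kq) \ge -B'$ for all $n$" becomes exactly "$u^{\lambda,r}_n \ge 0$ for all $n$" (after shifting the index), so deciding it would decide Positivity for the Diophantine-hard family, completing the reduction.
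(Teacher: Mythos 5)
There is a genuine gap, and it is precisely at the point you flag as the ``main obstacle''. Your pivotal claim that producing the terms $n\Rea(\lambda^n)$, $n\Ima(\lambda^n)$ ``needs a Jordan block, i.e.\ a repeated eigenvalue, which would push the dimension to $6$'' is wrong: the \emph{real} Jordan block for the conjugate pair $\lambda,\overline{\lambda}$ with multiplicity two is exactly $4\times 4$, namely $M = \left[\begin{smallmatrix} R_\lambda & I_2 \\ 0 & R_\lambda \end{smallmatrix}\right]$ with $R_\lambda = \left[\begin{smallmatrix} a & -b \\ b & a\end{smallmatrix}\right]$. This is what the paper uses: with $q = (0,0,0,1)$ one gets $M^n q = (-n\Ima(\lambda^{n-1}),\, n\Rea(\lambda^{n-1}),\, -\Ima(\lambda^n),\, \Rea(\lambda^n))$, so the factor-of-$n$ quantities are available directly as coordinates, a \emph{linear} weight function already realises $w_n = u^{\lambda,r}_{n+1}-u^{\lambda,r}_n$, and since $u^{\lambda,r}_0=0$ the energy constraint with $B=0$ is equivalent to $u^{\lambda,r}_n \ge 0$ for all $n$.

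Having ruled out the Jordan block, your fallback --- the block-diagonal, diagonalisable $M = \operatorname{diag}(R_\lambda,R_\lambda)$ with a quadratic weight whose cross-terms are supposed to telescope into $n\Rea(\lambda^n)$ --- cannot succeed. For that system every coordinate of $M^nq$ is a constant-coefficient combination of $\lambda^n$ and $\overline{\lambda}^n$, so for any polynomial $w$ the sequence $w(M^nq)$ is of the form $\sum_{j,k} c_{j,k}\,(\lambda^j\overline{\lambda}^k)^n$, i.e.\ an exponential sum with \emph{constant} coefficients. Summing a geometric term $\mu^n$ with $\mu\neq 1$ yields again a constant-coefficient geometric term (plus a constant), and only the eigenvalue $1$ contributes a linear-in-$n$ term; hence every partial sum $\sum_{k=0}^n w(M^kq)$ has exponential-polynomial form $\alpha n + \beta + \sum_i d_i\mu_i^n$ with constant $d_i$. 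By the uniqueness of the exponential-polynomial representation (\autoref{thm:exp-poly-of-id-zero}), such a sequence can never equal $u^{\lambda,r}_{n+1}$, which contains the term $-(n+1)\Rea(\lambda^{n+1})$ with $|\lambda|=1$ and $\lambda$ not a root of unity in the Diophantine-hard instances: a degree-one polynomial coefficient attached to the eigenvalues $\lambda,\overline{\lambda}\neq 1$ must already be present in the summand, which forces a non-diagonalisable update matrix. So no choice of coefficients makes your telescoping work; the construction must go through the $4$-dimensional Jordan-block matrix as in the paper, after which the rest of your reduction (budget shift, equivalence with Positivity of the family $u^{\lambda,r}_n$) goes through.
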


\begin{proof}
We prove the following:
If we can decide the energy satisfaction problem with $d = 4$, then we can decide for every $r, \lambda$ whether $u^{\lambda,r}_n \ge 0$ for all $n$.
Fix $r \in \rat$ and $\lambda = a+ bi \in\rat(i)$.
Define
\[
M = \begin{bmatrix}
	a&-b&1&0\\
	b&a&0&1\\
	0&0&a&-b\\
	0&0&b&a
\end{bmatrix}
\]
and the initial point $q = (0,0,0,1)$.
We have
\[
M^nq = (-n\Ima(\lambda^{n-1}), n\Rea(\lambda^{n-1}), -\Ima(\lambda^n), \Rea(\lambda^n)).
\]
Recall that for all $n\in\nat$, (i) $\Rea(\lambda^{n+1}) = a\Rea(\lambda^n)-b\Ima(\lambda^n)$ and (ii)  $\Ima(\lambda^{n+1}) = a\Ima(\lambda^n) + b\Rea(\lambda^n)$.
Hence there exist polynomials $p_1,p_2,p_3,p_4$ with rational coefficients such that  $p_1(M^nq) = n\Ima(\lambda^n)$, $p_2(M^nq) = n\Rea(\lambda^n)$, $p_3(M^nq) = \Rea(\lambda^n)$ and $p_4(M^nq) = \Ima(\lambda^n)$ for all $n$.
Next, consider
\[
w_n = u^{\lambda,r}_{n+1}-u^{\lambda,r}_{n} = r\Ima(\lambda^{n+1}) - n\Rea(\lambda^{n+1}) -   r\Ima(\lambda^{n}) + n\Rea(\lambda^{n}) + 1.
\]
Since $u^{\lambda,r}_0 = 0$, we have $u^{\lambda,r}_{n+1} = \sum_{k=0}^n w_n$.
Moreover, using facts (i), (ii) and the polynomials $p_1,\ldots,p_4$ we can construct a polynomial $p$ with rational coefficients such that $w_n = p(M^nq)$ for all $n$.
Hence $u^{\lambda,r}_n \ge 0$ for all $n$ if and only if the weights $\seq{p(M^nq)}$ satisfy the energy constraint with budget $B = 0$.
\end{proof}
	\section{Conclusion}
We have shown how to compute (or approximate) the mean-payoff and the total or discounted weight of the orbit of a rational linear dynamical system
for various combinations of classes of systems as well as  weight functions (see \autoref{tbl:overview}).
Remarkably, the results concerning  \emph{infinite horizon} questions (e.g. mean payoff, as opposed to satisfaction of energy constraints) do not rely on restrictions of the dimension in a stark contrast to decidability results 
for the Skolem \cite{tijdeman_distan_between_terms_algeb_recur_sequen,Verescchagin} and the Positivity \cite{ouaknine13_posit_probl_low_order_linear_recur_sequen,ouaknine_simple-positivity} problems, which themselves are special cases of reachability questions about the orbit of an LDS.
This is in line with vast decidability of various \emph{robust} versions of the reachability (alternatively, safety) problem of linear dynamical systems, which similarly does not depend on the dimension and is also proven using o-minimality \cite{karimov2024verification}.
Our results in \autoref{sec:o-min-mean-payoff} can be seen as a counterpoint to these decidability results regarding robust reachability: we show that o-minimality implies strong ergodic properties. 

For the question of whether an orbit of a rational LDS with a polynomial weight function satisfies an energy constraints, on the other hand, we have shown decidability for dimension 3 by utilising  deep number-theoretic tool that form the cornerstone of the theory of linear recurrence sequences, most notable Baker's theorem on linear forms in logarithms.
As it is expected with all finite (but not a priori bounded) horizon problems of linear dynamical systems, we obtain and Diophantine-hardness for the full problem, specifically already for systems in the ambient space $\rel^d$.
In fact, restricting the dynamical system to be stochastic (i.e.\ a Markov chain) and the weight function to be linear still results in a problem that is at least as hard as the Positivity Problem for linear recurrence sequences.
This, again, is unsurprising in light of \cite{AAOW2014,mihir} results that show that for the Skolem and Positivity problems, restricting the LDS to be a Markov chain does not change much.

In the future, this work can be extended in at least two ways.
Firstly, one can consider \emph{continuous-time} linear dynamical systems equipped with a weight function.
For such systems only hyperplane and halfspace reachability problems (which are analogues of the Skolem and Positivity problems, respectively) are well-understood, and the number-theoretic tools used in their analyses differ significantly from the ones used in this work \cite{chonev2023zeros}.
However, continuous-time linear dynamical systems also benefit from applications of o-minimality \cite{karimov2024verification}, and we believe that they also have good ergodic properties with respect to ``tame'' (e.g., o-minimal) weight functions.
Secondly, one can partition $\rel^d$ into a collection  of semialgebraic sets $S_1, \ldots, S_m$  and assign a fixed reward $w(S_i)$ to each $S_i$.
Here the reward received at time $n$ is $\sum_{i=1}^m\mathbbm{1}(M^nq \in S_i)w(S_i)$.
In this setting, already for $M$ that is a rotation in $\rel^2$, comparing the (discounted) total reward as well as the mean payoff against a given threshold appears to have deep connections to Diophantine approximation \cite{luca2022transcendence}.

	%% The Appendices part is started with the command \appendix;
	%% appendix sections are then done as normal sections
	%\appendix

	%% If you have bibdatabase file and want bibtex to generate the
	%% bibitems, please use
	%%
	\bibliographystyle{elsarticle-num} 
	\bibliography{refs}

\begin{thebibliography}{10}
\expandafter\ifx\csname url\endcsname\relax
  \def\url#1{\texttt{#1}}\fi
\expandafter\ifx\csname urlprefix\endcsname\relax\def\urlprefix{URL }\fi
\expandafter\ifx\csname href\endcsname\relax
  \def\href#1#2{#2} \def\path#1{#1}\fi

\bibitem{HSCC}
R.~Aghamov, C.~Baier, T.~Karimov, J.~Ouaknine, J.~Piribauer,
  \href{https://doi.org/10.1145/3641513.3650173}{Linear dynamical systems with
  continuous weight functions}, in: E.~{\'{A}}brah{\'{a}}m, M.~M. Jr. (Eds.),
  Proceedings of the 27th {ACM} International Conference on Hybrid Systems:
  Computation and Control, {HSCC} 2024, Hong Kong SAR, China, May 14-16, 2024,
  {ACM}, 2024, pp. 22:1--22:11.
\newblock \href {https://doi.org/10.1145/3641513.3650173}
  {\path{doi:10.1145/3641513.3650173}}.
\newline\urlprefix\url{https://doi.org/10.1145/3641513.3650173}

\bibitem{lee1987linearization}
H.-G. Lee, A.~Arapostathis, S.~I. Marcus, Linearization of discrete-time
  systems, International Journal of Control 45~(5) (1987) 1803--1822.

\bibitem{aranda1996linearization}
E.~Aranda-Bricaire, {\"U}.~Kotta, C.~H. Moog, Linearization of discrete-time
  systems, SIAM Journal on Control and Optimization 34~(6) (1996) 1999--2023.

\bibitem{tijdeman_distan_between_terms_algeb_recur_sequen}
R.~Tijdeman, M.~Mignotte, T.~Shorey, The distance between terms of an algebraic
  recurrence sequence., Journal f{\"u}r die reine und angewandte Mathematik
  (Crelles Journal) 1984~(349) (1984) 63--76.
\newblock \href {https://doi.org/10.1515/crll.1984.349.63}
  {\path{doi:10.1515/crll.1984.349.63}}.

\bibitem{Verescchagin}
N.~K. Vereshchagin, The problem of appearance of a zero in a linear recurrence
  sequence (in {R}ussian), Mat. Zametki 38~(2) (1985).

\bibitem{ouaknine13_posit_probl_low_order_linear_recur_sequen}
J.~Ouaknine, J.~Worrell, Positivity problems for low-order linear recurrence
  sequences, in: Proceedings of the Twenty-Fifth Annual ACM-SIAM Symposium on
  Discrete Algorithms, 2013.
\newblock \href {https://doi.org/10.1137/1.9781611973402.27}
  {\path{doi:10.1137/1.9781611973402.27}}.

\bibitem{ouaknine_simple-positivity}
J.~Ouaknine, J.~Worrell, On the positivity problem for simple linear recurrence
  sequences, in: J.~Esparza, P.~Fraigniaud, T.~Husfeldt, E.~Koutsoupias (Eds.),
  Automata, Languages, and Programming, Springer Berlin Heidelberg, Berlin,
  Heidelberg, 2014, pp. 318--329.

\bibitem{DCostaKMOSW22}
J.~D'Costa, T.~Karimov, R.~Majumdar, J.~Ouaknine, M.~Salamati, J.~Worrell, The
  pseudo-reachability problem for diagonalisable linear dynamical systems, in:
  47th International Symposium on Mathematical Foundations of Computer Science,
  {MFCS} 2022, Vol. 241 of LIPIcs, Schloss Dagstuhl - Leibniz-Zentrum f{\"{u}}r
  Informatik, 2022, pp. 40:1--40:13.
\newblock \href {https://doi.org/10.4230/LIPIcs.MFCS.2022.40}
  {\path{doi:10.4230/LIPIcs.MFCS.2022.40}}.

\bibitem{karimov2024verification}
T.~Karimov, J.~Ouaknine, J.~Worrell, Verification of linear dynamical systems
  via o-minimality of real numbers, arXiv preprint arXiv:2410.13053 (2024).

\bibitem{pila2014minimality}
J.~Pila, G.~Jones, A.~Wilkie, O-minimality and {D}iophantine geometry, in:
  Proceedings ICM, 2014.

\bibitem{AgrawalApp}
M.~Agrawal, S.~Akshay, B.~Genest, P.~S. Thiagarajan, Approximate verification
  of the symbolic dynamics of {M}arkov chains, J. {ACM} 62~(1) (2015)
  2:1--2:34.
\newblock \href {https://doi.org/10.1145/2629417} {\path{doi:10.1145/2629417}}.

\bibitem{AKK21}
S.~Almagor, T.~Karimov, E.~Kelmendi, J.~Ouaknine, J.~Worrell, Deciding
  {\(\omega\)}-regular properties on linear recurrence sequences, Proc. {ACM}
  Program. Lang. 5~({POPL}) (2021).

\bibitem{KLO22}
T.~Karimov, E.~Lefaucheux, J.~Ouaknine, D.~Purser, A.~Varonka, M.~A. Whiteland,
  J.~Worrell, What's decidable about linear loops?, Proc. {ACM} Program. Lang.
  6~({POPL}) (2022).

\bibitem{Baier0JKLLOPW021}
C.~Baier, F.~Funke, S.~Jantsch, T.~Karimov, E.~Lefaucheux, F.~Luca,
  J.~Ouaknine, D.~Purser, M.~A. Whiteland, J.~Worrell, The orbit problem for
  parametric linear dynamical systems, in: 32nd International Conference on
  Concurrency Theory, {CONCUR} 2021, Vol. 203 of LIPIcs, Schloss Dagstuhl -
  Leibniz-Zentrum f{\"{u}}r Informatik, 2021, pp. 28:1--28:17.
\newblock \href {https://doi.org/10.4230/LIPIcs.CONCUR.2021.28}
  {\path{doi:10.4230/LIPIcs.CONCUR.2021.28}}.

\bibitem{BGV22}
S.~Akshay, H.~Bazille, B.~Genest, M.~Vahanwala, On robustness for the {S}kolem
  and positivity problems, in: 39th International Symposium on Theoretical
  Aspects of Computer Science, {STACS} 2022), Vol. 219 of LIPIcs, Schloss
  Dagstuhl - Leibniz-Zentrum f{\"{u}}r Informatik, 2022, pp. 5:1--5:20.
\newblock \href {https://doi.org/10.4230/LIPIcs.STACS.2022.5}
  {\path{doi:10.4230/LIPIcs.STACS.2022.5}}.

\bibitem{KarimovKO022}
T.~Karimov, E.~Kelmendi, J.~Ouaknine, J.~Worrell, What’s decidable about
  discrete linear dynamical systems?, in: Principles of Systems Design: Essays
  Dedicated to Thomas A. Henzinger on the Occasion of His 60th Birthday,
  Springer, 2022, pp. 21--38.

\bibitem{kelmendi2022}
E.~Kelmendi, Computing the density of the positivity set for linear recurrence
  sequences, in: Proceedings of the 37th Annual ACM/IEEE Symposium on Logic in
  Computer Science (LICS), 2022, pp. 1--12.

\bibitem{AAOW2014}
S.~Akshay, T.~Antonopoulos, J.~Ouaknine, J.~Worrell, Reachability problems for
  {M}arkov chains, Information Processing Letters 115 (2014) 155--158.

\bibitem{mihir}
M.~Vahanwala, Skolem and positivity completeness of ergodic {M}arkov chains,
  Information Processing Letters 186 (2024) 106481.

\bibitem{lafferriere2000minimal}
G.~Lafferriere, G.~Pappas, S.~Sastry, O-minimal hybrid systems, Mathematics of
  control, signals and systems 13 (2000) 1--21.

\bibitem{miller2011expansions}
C.~Miller, Expansions of o-minimal structures on the real field by trajectories
  of linear vector fields, Proceedings of the American Mathematical Society
  139~(1) (2011) 319--330.

\bibitem{cohen2013course}
H.~Cohen, A course in computational algebraic number theory, Vol. 138, Springer
  Science \& Business Media, 2013.

\bibitem{everest-recurrence-sequences}
G.~Everest, A.~van~der Poorten, I.~Shparlinski, T.~Ward, Recurrence Sequences,
  2003.
\newblock \href {https://doi.org/10.1090/surv/104}
  {\path{doi:10.1090/surv/104}}.

\bibitem{kauers-tetrahedron}
M.~Kauers, P.~Paule, The Concrete Tetrahedron, Springer Vienna, 2011.
\newblock \href {https://doi.org/10.1007/978-3-7091-0445-3}
  {\path{doi:10.1007/978-3-7091-0445-3}}.

\bibitem{power-of-positivity}
T.~Karimov, E.~Kelmendi, J.~Nieuwveld, J.~Ouaknine, J.~Worrell, The power of
  {P}ositivity, in: 2023 38th Annual ACM/IEEE Symposium on Logic in Computer
  Science (LICS), IEEE, 2023, pp. 1--11.

\bibitem{karimov-thesis}
T.~Karimov, Algorithmic verification of linear dynamical systems, Ph{D} thesis,
  Saarland University (2024).

\bibitem{Kulkarni1995}
V.~G. Kulkarni, Modeling and Analysis of Stochastic Systems, Chapman \& Hall,
  Ltd., 1995.

\bibitem{BK08}
C.~Baier, J.-P. Katoen, Principles of Model Checking, MIT Press, 2008.

\bibitem{masser-mult-relations}
D.~W. Masser, Linear relations on algebraic groups, New Advances in
  Transcendence Theory (1988) 248--262.

\bibitem{ouaknine2014ultimate-pos-for-simple-lrs}
J.~Ouaknine, J.~Worrell, Ultimate positivity is decidable for simple linear
  recurrence sequences, in: International Colloquium on Automata, Languages,
  and Programming, Springer, 2014, pp. 330--341.

\bibitem{vdD1994bounded-analytic}
L.~van~den Dries, C.~Miller, On the real exponential field with restricted
  analytic functions, Israel Journal of Mathematics 85 (1994) 19--56.

\bibitem{vdD-geometric-categories}
L.~van~den Dries, C.~Miller, {Geometric categories and o-minimal structures},
  Duke Mathematical Journal 84~(2) (1996) 497--540.
\newblock \href {https://doi.org/10.1215/S0012-7094-96-08416-1}
  {\path{doi:10.1215/S0012-7094-96-08416-1}}.

\bibitem{Weyl1916}
H.~Weyl, {\"U}ber die {G}leichverteilung von {Z}ahlen mod. {E}ins,
  Mathematische Annalen 77~(3) (1916) 313--352.

\bibitem{braverman}
M.~Braverman, Termination of integer linear programs, in: Proceedings of the
  18th International Conference on Computer Aided Verification, CAV'06,
  Springer-Verlag, Berlin, Heidelberg, 2006, p. 372–385.
\newblock \href {https://doi.org/10.1007/11817963_34}
  {\path{doi:10.1007/11817963_34}}.

\bibitem{davis2007methods}
P.~J. Davis, P.~Rabinowitz, Methods of numerical integration, Courier
  Corporation, 2007.

\bibitem{NorrisMC}
J.~R. Norris, Markov Chains, Cambridge University Press, 2009.

\bibitem{baker-rational-sharp-version-1993}
G.~Wüstholz, A.~Baker, Logarithmic forms and group varieties, Journal für die
  reine und angewandte Mathematik 442 (1993) 19--62.

\bibitem{waldschmidt2013diophantine}
M.~Waldschmidt, Diophantine approximation on linear algebraic groups:
  transcendence properties of the exponential function in several variables,
  Vol. 326, Springer Science \& Business Media, 2013.

\bibitem{karimov-ltl-3d}
T.~Karimov, J.~Ouaknine, J.~Worrel, On {LTL} model checking for low-dimensional
  discrete linear dynamical systems, in: 45th International Symposium on
  Mathematical Foundations of Computer Science, {MFCS} 2020, LIPIcs 170, 2020.

\bibitem{chonev2023zeros}
V.~Chonev, J.~Ouaknine, J.~Worrell, On the zeros of exponential polynomials,
  Journal of the ACM 70~(4) (2023) 1--26.

\bibitem{luca2022transcendence}
F.~Luca, J.~Ouaknine, J.~Worrell, On the transcendence of a series related to
  {S}turmian words, Annali Scuola Normale Superiore -- Classe di Scienze (2022)
  35--35.

\end{thebibliography}
	
	%% else use the following coding to input the bibitems directly in the
	%% TeX file.
	
	% \begin{thebibliography}{00}
		
		% %% \bibitem{label}
		% %% Text of bibliographic item
		
		% \bibitem{}
		
		% \end{thebibliography}
	
\end{document}